\def\YEAR{\year}\newcount\VOL\VOL=\YEAR\advance\VOL by-1995
\def\firstpage{1}\def\lastpage{1000}
\def\received{}\def\revised{}
\def\communicated{}
\def\magnification{\afterassignment\m@g\count@}
\def\m@g{\mag=\count@\hsize6.5truein\vsize8.9truein\dimen\footins8truein}
\font\eightrm=cmr8
\font\caps=cmcsc10                    
\font\Caps=cmcsc10 scaled \magstep1   
\def\DocMath{}
\renewcommand{\@evenhead}{%
    \ifnum\thepage>\lastpage\rlap{\thepage}\hfill%
    \else\rlap{\thepage}\slshape\leftmark\hfill{\caps\SAuthor}\hfill\fi}%
\renewcommand{\@oddhead}{%
    \ifnum\thepage=\firstpage{\DocMath\hfill\llap{\thepage}}%
    \else{\slshape\rightmark}\hfill{\caps\STitle}\hfill\llap{\thepage}\fi}%
\def\TSkip{\bigskip}
\newbox\TheTitle{\obeylines\gdef\GetTitle #1
\ShortTitle  #2
\SubTitle    #3
\Author      #4
\ShortAuthor #5
\EndTitle
{\setbox\TheTitle=\vbox{\baselineskip=20pt\let\par=\cr\obeylines%
\halign{\centerline{\Caps##}\cr\noalign{\medskip}\cr#1\cr}}%
	\copy\TheTitle\TSkip\TSkip%
\def\next{#2}\ifx\next\empty\gdef\STitle{#1}\else\gdef\STitle{#2}\fi%
\def\next{#3}\ifx\next\empty%
    \else\setbox\TheTitle=\vbox{\baselineskip=20pt\let\par=\cr\obeylines%
    \halign{\centerline{\caps##} #3\cr}}\copy\TheTitle\TSkip\TSkip\fi%
\centerline{\caps #4}\TSkip\TSkip%
\def\next{#5}\ifx\next\empty\gdef\SAuthor{#4}\else\gdef\SAuthor{#5}\fi%
\ifx\received\empty\relax
    \else\centerline{\eightrm Received: \received}\fi%
\ifx\revised\empty\TSkip%
    \else\centerline{\eightrm Revised: \revised}\TSkip\fi%
\ifx\communicated\empty\relax
    \else\centerline{\eightrm Communicated by \communicated}\fi\TSkip\TSkip%
\catcode'015=5}}\def\Title{\obeylines\GetTitle}
\def\Abstract{\begingroup\narrower
    \parskip=\medskipamount\parindent=0pt{\caps Abstract. }}
\def\EndAbstract{\par\endgroup\TSkip}
\long\def\MSC#1\EndMSC{\def\arg{#1}\ifx\arg\empty\relax\else
     {\par\narrower\noindent%
     2000 Mathematics Subject Classification: #1\par}\fi}
\long\def\KEY#1\EndKEY{\def\arg{#1}\ifx\arg\empty\relax\else
	{\par\narrower\noindent Keywords and Phrases: #1\par}\fi\TSkip}
\newbox\TheAdd\def\Addresses{\vfill\copy\TheAdd\vfill
    \ifodd\number\lastpage\vfill\eject\phantom{.}\vfill\eject\fi}
{\obeylines\gdef\GetAddress #1
\Address #2 
\Address #3
\Address #4
\EndAddress
{\def\xs{4.3truecm}\parindent=0pt
\setbox0=\vtop{{\obeylines\hsize=\xs#1\par}}\def\next{#2}
\ifx\next\empty 
     \setbox\TheAdd=\hbox to\hsize{\hfill\copy0\hfill}
\else\setbox1=\vtop{{\obeylines\hsize=\xs#2\par}}\def\next{#3}
\ifx\next\empty 
     \setbox\TheAdd=\hbox to\hsize{\hfill\copy0\hfill\copy1\hfill}
\else\setbox2=\vtop{{\obeylines\hsize=\xs#3\par}}\def\next{#4}
\ifx\next\empty\ 
     \setbox\TheAdd=\vtop{\hbox to\hsize{\hfill\copy0\hfill\copy1\hfill}
                \vskip20pt\hbox to\hsize{\hfill\copy2\hfill}}
\else\setbox3=\vtop{{\obeylines\hsize=\xs#4\par}}
     \setbox\TheAdd=\vtop{\hbox to\hsize{\hfill\copy0\hfill\copy1\hfill}
	        \vskip20pt\hbox to\hsize{\hfill\copy2\hfill\copy3\hfill}}
\fi\fi\fi\catcode'015=5}}\gdef\Address{\obeylines\GetAddress}
\newtheorem{thm}{Theorem}[section]
\newtheorem{cor}[thm]{Corollary}
\newtheorem{lem}[thm]{Lemma}
\newtheorem{dff}[thm]{Definition}
\newtheorem{xmp}[thm]{Example}
\newtheorem{remark}[thm]{Remark}
\def\mbf{\mathbf}
\def\W{\mathbf{W}}
\def\tn{\textnormal}
\def\mcal{\mathcal}
\def\goth{\mathfrak}
\def\Map{\textnormal{Map}}
\newcommand{\PGL}{\textnormal{PGL}}
\newcommand{\Lev}{\textnormal{Lev}}
\def\QQ{\mathbf{Q}}
\def\ZZ{\mathbf{Z}}
\begin{document}
\Title On the structure of Witt-Burnside rings attached to pro-$p$ groups
\ShortTitle Witt-Burnside rings attached to pro-$p$ groups
\SubTitle   
\Author  Lance Edward Miller
\ShortAuthor  L. E. Miller
\EndTitle
\Abstract 
The $p$-typical Witt vectors are a ubiquitous object in algebra and number theory. They arise as a functorial construction that takes perfect fields $k$ of prime characteristic $p > 0$ to $p$-adically complete discrete valuation rings of characteristic $0$ with residue field $k$ and are universal in that sense. A. Dress and C. Siebeneicher generalized this construction by producing a functor $\W_G$ attached to any profinite group $G$. The $p$-typical Witt vectors arise as those attached to the $p$-adic integers. Here we examine the ring structure of $\W_G(k)$ for several examples of pro-$p$ groups $G$ and fields $k$ of characteristic $p$. We will show that the structure is surprisingly more complicated than the $p$-typical case. 
\EndAbstract
\MSC 
\EndMSC
\KEY 
\EndKEY
\Address University of Utah \\ Department of Mathematics  \\ 155 S 1400 E Room 233 \\ Salt Lake City, UT \\ 84112-0090
\Address
\Address
\Address
\EndAddress

\section[Introduction]{Introduction}

The purpose of this article is to explore what kinds of rings lie in the image of the functors $\W_G$ introduced by A.~Dress and C.~Siebeneicher \cite{DS} in the case that $G$ is a pro-$p$ group for prime integer $p$. 
These functors were originally defined for all profinite groups and are now called Witt-Burnside functors due to the fact that they generalize both the $p$-typical (recovered when $G = \ZZ_p$ as an additive group) and `big' Witt vector construction (recovered when $G = \widehat{\ZZ}$), as well as Burnside functors (recovered as $\W_G(\ZZ)$). We call rings lying in the image of a Witt-Burnside functor {\it Witt-Burnside rings}. 
These functors are of significant interest, and yet to date the types of rings which are produced from this construction lack description. To illustrate some of their recent importance, we remark that Witt-Burnside functors have been used extensively to study equivariant ring spectra as they arise as a left adjoint for Tambara functors \cite{Bru05,Bru07,Str}.

Many constructions of Witt-Burnside functors have been given. Specifically, J.~Graham's construction utilizes ring valued $G$-sets \cite{Graham}. J.~Elliott  gave a unified construction relating the Graham and Dress and Siebeneicher approaches \cite{Elliott}. Y.~Oh has studied some decomposition and $q$-deformation questions \cite{OH1,OH2, OH12}. Beyond the classical $G = \ZZ_p$ case, not much about the ring structure of the images of Witt-Burnside functors is known. Motivated by the extensive applications enjoyed by the $p$-typical and big Witt vectors, this paper addresses structural questions about Witt-Burnside rings under the assumptions that $G$ is an infinite pro-$p$ group and $k$ is a  field of characteristic $p > 0$.

The construction of the classical (both $p$-typical and big) Witt vectors uses the Witt polynomials to define certain 
addition and multiplication polynomials with rational coefficients which do not obviously have integral coefficients \cite{Witt}. 
A.~Dress and C.~Siebeneicher generalized the Witt polynomials to a family of multivariable polynomials associated to any profinite group $G$. Like classical Witt polynomials, these polynomials obviously have $\mbf{Q}$-coefficients and a significant theorem of Dress and Siebeneicher shows that they in fact have integral coefficients. Thus one can use these polynomials, in an analogous way to the construction of classical Witt vectors, to define a functor $\W_G$ on the category of commutative rings for each profinite group $G$. For infinite pro-$p$ groups $G$, the functor $\W_G$ retains the surprising property of taking rings of characteristic $p$ to rings of characteristic $0$. 

For perfect fields $k$ of characteristic $p$, the ring $\W_{\ZZ_p}(k)$ is the ring of classical $p$-typical Witt vectors. These are  $p$-adically complete discrete valuation domains with maximal ideal $(p)$ and residue field $k$. From \cite[Thm. 2]{DS} it is known that $\W_G(k)$ is a ring of characteristic zero when $G \not\cong \ZZ_p$ is an infinite pro-$p$ group and $k$ is a field. We show as expected that $\W_G(k)$ shares some properties with the $p$-typical case when $G$ is pro-$p$ and $k$ has characteristic $p$. 

\

{\bf{Theorem} (Cf., Theorem~\ref{units})}
{\it{For $G$ a pro-$p$ group and $A$ a local ring of characteristic $p > 0$, $\W_G(A)$ is a local ring.}}

\

However, the similarities do not run deep! 

\

{\bf{Theorem} (Cf., Theorem~\ref{thm:Zpsqnotnoth})} 
{\it{For $G = \ZZ_p^d$, $k$ a field of characteristic $p > 0$ and $d \geq 2$, the maximal ideal of $\W_G(k)$ is not finitely generated, so $\W_{G}(k)$ is not Noetherian.}}

\

The core reason for the complication in the case $G = \ZZ_p^d$ when $d \geq 2$ versus $d=1$ is the existence of more than one maximal subgroup. In general, when $G$ is any pro-$p$ group that is not pro-cyclic there is more than one maximal open subgroup $H$, necessarily normal. These subgroups describe certain coordinates in the Witt vectors attached to $G$ for which sums and products have repeated values, or redundencies. In particular, $G = \ZZ_p^d$ has a fairly homogeneous subgroup structure as every open subgroup is (non-canonically) isomorphic to $G$. This means the redundancy behavior when $d \geq 2$ propagates and manifests as a much smaller square of the maximal ideal than expected. Another consequence of there being more than one maximal subgroup of an infinite pro-$p$ group $G$ when $G \not\cong \ZZ_p$ is the ability to construct zero divisors. This was essentially known in \cite{DS}. Our second main goal is to give some control on the zero divisors of $\W_G(k)$ for $G = \ZZ_p^2$ and $k$ a field of characteristic $p$. 

\

{\bf{Theorem} (Cf., Theorem~\ref{thm:Red})}
{\it{For $G = \ZZ_p^2$ and any field $k$ of characteristic $p$, the ring $\W_{G}(k)$ is reduced.}}

\

The methods used here fail for $d > 2$ due to the reliance on a certain property of the subgroup structure of $\ZZ_p^2$ which is not satisfied more generally. 

The rest of the paper is organized as follows. Section \ref{sec:prelim} develops the preliminary definitions, constructions, and facts about Witt-Burnside rings. We also prove necessary lemmas for the paper and close with the proof that $\W_G(A)$ is local when $G$ is pro-$p$ and $A$ is a local ring of characteristic $p$. Section \ref{secd2} discusses in detail the frame of $\ZZ_p^d$ for $d \geq 2$; these groups form the basic examples of the paper. Section \ref{sec:Zpdnotnoth} discusses the failure of finite generation in the maximal ideal $\W_{\ZZ_p^d}(k)$ for $d \geq 2$ when the characteristic of $k$ is $p$. Finally, Section \ref{sec:reduced} concerns nilpotent elements in $\W_G(k)$. Unless otherwise stated, $G$ will always be a profinite group and $p$ denotes  a prime integer. 

\section{Preliminaries}
\label{sec:prelim}

Witt-Burnside rings are constructed utilizing generalized Witt polynomials associated to a profinite group $G$. The index set of these generalized polynomials is the set of isomorphism classes of discrete finite transitive $G$-sets, called the {\it{frame}} of $G$ and denoted $\mcal{F}(G)$. For example, $\mcal{F}(\ZZ_p) = \mbf{N}$ by the correspondence $\ZZ_p/p^n\ZZ_p \leftrightarrow n$. There is a natural partial ordering on $\mcal{F}(G)$. For $T$ and $U$ in $\mcal{F}(G)$ we say $U \leq T$ if there is a $G$-map from $T$ to $U$. Denote the set of all $G$-maps from $T$ to $U$ as $\Map_G(T,U)$ and the number of $G$-maps $\# \Map_G(T,U)$ by $\varphi_T(U)$. Thus $\varphi_T(U) \neq 0$ if and only if $T \leq U$. We summarize some facts about $\mcal{F}(G)$ 

\begin{enumerate}
\item If $T$ and $U$ in $\mcal{F}(G)$ with $U \leq T$, then $\# U$ divides $\# T$ and $\# T / \# U$ represents the size of any of the fibers of any element of $\Map_G(T,U)$. 
\item If the stabilizer subgroups of the points in $T$ are all equal (we will say in this case that $T$ has normal stabilizers or that $T$ is a normal $G$-set), then $\varphi_T(U) = \# U$ for $U \leq T$. 
\item For each $T$ in $\mcal{F}(G)$, there are only finitely many $U$ in $\mcal{F}(G)$ with $U \leq T$. 
\end{enumerate}

The elements of $\mcal{F}(G)$ have a concrete description. Every finite transitive $G$-set $T$ is isomorphic to some coset space $G/H$ with left $G$-action, where $H$ is an open subgroup of $G$ that can be chosen as the stabilizer subgroup of any point in $T$.  
The partial order $\leq$ on coset spaces (considered as $G$-sets up to isomorphism) can be described concretely by $G/K \leq G/H$ if and only if $H$ is conjugate to a subgroup of $K$ (or equivalently, $H$ is a subgroup of a conjugate of $K$). 

For $T \in \mcal{F}(G)$, 
define the $T$-th {\it Witt polynomial} to be 
\begin{equation}\label{Wdef}
W_T(\{X_U\}_{U \in \mcal{F}(G)}) = \sum\limits_{U \leq T} \varphi_T(U) X_U^{\#T / \#U} = X_0^{\#T} + \ldots + \varphi_T(T) X_T,
\end{equation}
where $0$ denotes the trivial $G$-set $G/G$. Trivially $\varphi_T(0) = 1$ for all $T$ in $\mcal{F}(G)$. 
This is a finite sum since there are only finitely many 
$U \leq T$.

For instance, if $G = \mbf{Z}_p$ then the finite transitive $G$-sets up to isomorphism 
are $\mbf{Z}_p/p^n\mbf{Z}_p$ for $n \geq 0$ and the Witt polynomial associated to $\mbf{Z}_p/p^n\mbf{Z}_p$ is the classical $n$-th $p$-typical Witt polynomial. Figure \ref{fig:frame} displays the frame of $\ZZ_2^2$. When $G = \ZZ_p^2$, all $G$-sets have $p+1$ covers (that is, there are exactly $p+1$ $G$-sets in the frame lying immediately above each $G$-set). Other than the trivial $G$-set, each $G$-set below the horizontal line in Figure \ref{fig:frame} has $p$ covers also below the horizontal line. This line is not part of the frame and depicts a property about the stabilizers of various $G$-sets described in Definition~\ref{dff:level}. 

\begin{figure}[htb]
\begin{center}
\scalebox{0.8}{
\begin{tikzpicture}[smooth]


\fill[black] (0,0) circle (0.1cm);

\fill[black] (1,0) circle (0.1cm);
\fill[black] (1,-1) circle (0.1cm);
\fill[black] (1,1) circle (0.1cm);

\fill[black] (3,1.25) circle (0.1cm);
\fill[black] (3,0.75) circle (0.1cm);

\fill[black] (3,0.25) circle (0.1cm);
\fill[black] (3,-0.25) circle (0.1cm);

\fill[black] (3,-0.75) circle (0.1cm);
\fill[black] (3,-1.25) circle (0.1cm);

\fill[black] (3,2.5) circle (0.1cm);

\fill[black] (5,3) circle (0.1cm);
\fill[black] (5,2.5) circle (0.1cm);
\fill[black] (5,2) circle (0.1cm);

\fill[black] (5,-1.2) circle (0.1cm);
\fill[black] (5,-1.45) circle (0.1cm);

\fill[black] (5,-0.65) circle (0.1cm);
\fill[black] (5,-0.9) circle (0.1cm);

\fill[black] (5,-0.15) circle (0.1cm);
\fill[black] (5,-0.4) circle (0.1cm);

\fill[black] (5,0.35) circle (0.1cm);
\fill[black] (5,0.1) circle (0.1cm);

\fill[black] (5,0.85) circle (0.1cm);
\fill[black] (5,0.6) circle (0.1cm);

\fill[black] (5,1.35) circle (0.1cm);
\fill[black] (5,1.1) circle (0.1cm);

\fill[black] (7,0) circle (0.05cm);
\fill[black] (7.2,0) circle (0.05cm);
\fill[black] (7.4,0) circle (0.05cm);

\fill[black] (5.5,2.5) circle (0.05cm);
\fill[black] (5.7,2.5) circle (0.05cm);
\fill[black] (5.9,2.5) circle (0.05cm);

\fill[black] (4,3.25) circle (0.05cm);
\fill[black] (4.2,3.45) circle (0.05cm);
\fill[black] (4.4,3.65) circle (0.05cm);

\draw[-] (0,0) -- (1,0);
\draw[-] (0,0) -- (1,-1);
\draw[-] (0,0) -- (1,1);

\draw[-] (1,1) -- (3,1.25);
\draw[-] (1,1) -- (3,0.75);

\draw[-] (1,0) -- (3,0.25);
\draw[-] (1,0) -- (3,-0.25);

\draw[-] (1,-1) -- (3,-0.75);
\draw[-] (1,-1) -- (3,-1.25);

\draw[-] (-1,1.5) -- (6,1.5);

\draw[-] (1,1) -- (3,2.5);
\draw[-] (1,0) -- (3,2.5);
\draw[-] (1,-1) -- (3,2.5);

\draw[-] (3,2.5) -- (5,3);
\draw[-] (3,2.5) -- (5,2.5);
\draw[-] (3,2.5) -- (5,2);

\draw[-] (3,1.25) -- (5,1.35);
\draw[-] (3,1.25) -- (5,1.1);

\draw[-] (3,0.75) -- (5,0.85);
\draw[-] (3,0.75) -- (5,0.6);

\draw[-] (3,0.25) -- (5,0.35);
\draw[-] (3,0.25) -- (5,0.1);

\draw[-] (3,-0.25) -- (5,-0.15);
\draw[-] (3,-0.25) -- (5,-0.4);

\draw[-] (3,-0.75) -- (5,-0.65);
\draw[-] (3,-0.75) -- (5,-0.9);

\draw[-] (3,-1.25) -- (5,-1.2);
\draw[-] (3,-1.25) -- (5,-1.45);

\draw[-] (3,1.25) -- (5,3);
\draw[-] (3,0.75) -- (5,3);

\draw[-] (3,0.25) -- (5,2.5);
\draw[-] (3,-0.25) -- (5,2.5);

\draw[-] (3,-0.75) -- (5,2);
\draw[-] (3,-1.25) -- (5,2);

\node at (-0.75,0) {$\ZZ_2^2/\ZZ_2^2$} {};
\node at (2,2.5) {$\ZZ_2^2/2 \ZZ_2^2$} {};

%
%
%

\end{tikzpicture}
}
\end{center}
\caption{The frame $\mcal{F}(\ZZ_2^2)$.}
\label{fig:frame}
\end{figure}


\begin{remark}
\label{rmk:trees}
The picture in Figure \ref{fig:frame} is reminiscent of the tree 
of $\ZZ_2$-lattices in $\QQ_2^2$ up to scaling, on which $\PGL_2(\QQ_2)$ acts \cite[p.~71]{TR}. 
However, it is different since  the $G$-sets $\ZZ_2^2/2^r\ZZ_2^2$ appear as separate vertices in Figure \ref{fig:frame}, while the subgroups 
$2^r\ZZ_2^2$ all correspond to the same vertex in the tree for $\PGL_2(\QQ_2)$.
\end{remark}

To simplify notation, write a tuple of variables $X_T$ indexed by all $T$ in $\mathcal{F}(G)$ 
as $\underline{X}$, e.g.,  
$W_T(\{X_U\}_{U \in \mcal{F}(G)}) = W_T(\underline{X})$, 
$\ZZ[\{X_T\}_{T \in \mcal{F}(G)}] = \ZZ[\underline{X}]$, and 

\noindent  $\ZZ[\{X_T, Y_T\}_{T \in \mcal{F}(G)}] = \ZZ[\underline{X}, \underline{Y}]$.  
This underline notation of course depends on $G$.  For any commutative ring $A$, 
a polynomial 
$f(\underline{X}) \in \ZZ[\underline{X}]$ 
defines a function from $\prod_{T \in \mcal{F}(G)} A$ to $A$, and 
for a tuple $\mbf{a} = (a_T)_{T \in \mcal{F}(G)}$ with coordinates in $A$ we write  
$f(\mbf{a}) = f(\{a_T\}_{T \in \mcal{F}(G)}) \in A$.  A similar meaning is 
applied to $f(\mbf{a},\mbf{b})$ for a polynomial 
$f(\underline{X},\underline{Y}) \in \ZZ[\underline{X},\underline{Y}]$.  Generally, we write sequences indexed by $\mcal{F}(G)$ as bold letters $(e.g., \mbf{a},\mbf{b},\mbf{x},\mbf{y},\mbf{v})$ and their $T$-th coordinate is in italics $(e.g., a_T,b_T,x_T,y_T,v_T)$.

Because $X_T$ appears on the right side of (\ref{Wdef}) just in the linear term $\varphi_T(T)X_T$, 
and all variables which appear in other terms are $X_U$ for $U < T$, we get 
the following uniqueness criterion for all the Witt polynomial values together which is equivalent to Lemma 2.1 in \cite[p. 331]{Elliott}.

\begin{thm}\label{invertthm}
If $A$ is a commutative ring which has no $\varphi_T(T)$-torsion, 
then the function $\prod_{T \in \mcal{F}(G)} A \rightarrow 
\prod_{T \in \mcal{F}(G)} A$ given by 
$\mbf{a} \mapsto (W_T(\mbf{a}))_{T \in \mcal{F}(G)}$ is injective. This function is bijective provided each $\varphi_T(T)$ is a unit in $A$. 
\end{thm}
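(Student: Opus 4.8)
The plan is to exploit the triangular structure of the Witt polynomials with respect to the partial order $\leq$ on $\mcal{F}(G)$. Fact (3) above guarantees that for each $T$ the set $\{U : U \leq T\}$ is finite, so we may induct on $\# T$ (or on the height of $T$ in the frame) to build the preimage coordinate by coordinate. Concretely, given a target tuple $\mbf{b} = (b_T)$, I want to solve $W_T(\mbf{a}) = b_T$ for all $T$ simultaneously; the base case is $T = 0$, where $W_0(\underline{X}) = X_0$, so $a_0 = b_0$ is forced.

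For the inductive step, I would fix $T$ and assume $a_U$ has already been determined (uniquely, resp. uniquely and consistently) for all $U < T$. Isolating the linear term in (\ref{Wdef}), the equation $W_T(\mbf{a}) = b_T$ reads
\begin{equation*}
\varphi_T(T)\, a_T = b_T - \sum_{U < T} \varphi_T(U)\, a_U^{\#T/\#U},
\end{equation*}
where the right-hand side is an element of $A$ already determined by the previously chosen coordinates. For injectivity: if $A$ has no $\varphi_T(T)$-torsion, then multiplication by $\varphi_T(T)$ on $A$ is injective, so $a_T$ is uniquely determined by the right-hand side; running the induction shows two tuples with the same Witt vector agree in every coordinate, proving the map is injective. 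For bijectivity: if each $\varphi_T(T)$ is a unit, then at each stage we may divide by $\varphi_T(T)$ to produce the required $a_T$, and running the induction produces a (unique) preimage for every target tuple $\mbf{b}$, so the map is a bijection.

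The one genuine subtlety is that this is an induction over the poset $\mcal{F}(G)$ rather than over $\mbf{N}$, so I should be a little careful that it is well-founded: since $U < T$ forces $\# U \mid \# T$ with $\# U < \# T$ (Fact (1)), I can strictly decrease on the integer $\# T$, and within a fixed value of $\# T$ there are only finitely many classes — alternatively, I can simply do the construction by recursion on $\# T \in \mbf{N}$, handling all $T$ of a given cardinality at once using the already-constructed coordinates at smaller cardinalities. I expect this bookkeeping — phrasing the recursion cleanly so that "$a_U$ for all $U < T$" is genuinely available when we reach $T$ — to be the only point requiring care; the algebra itself is just the observation that $W_T$ is linear in its top variable $X_T$ with leading coefficient $\varphi_T(T)$, exactly as in the classical case $G = \ZZ_p$ where $\varphi_T(T)$ is a power of $p$.
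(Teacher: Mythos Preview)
Your proposal is correct and follows essentially the same approach as the paper: both argue by induction on $\#T$, using that $W_T$ is linear in $X_T$ with coefficient $\varphi_T(T)$ and otherwise depends only on variables indexed by $U < T$. The paper phrases injectivity by comparing two tuples $\mbf{a}, \mbf{b}$ with equal Witt images and cancelling common terms to get $\varphi_T(T)(a_T - b_T) = 0$, while you frame it as uniquely solving for $a_T$, but these are the same argument; your treatment of the well-foundedness via $\#T$ is exactly what the paper does as well.
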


\begin{xmp}
\label{pxmp23}
If $G$ is a pro-$p$ group and $T \cong G/H$, then $\varphi_T(T) = [{\rm{N}}_G(H) : H]$ is a 
power of $p$, so if $p$ is invertible in $A$ then every 
$\mbf{a} \in \prod_{T \in \mcal{F}(G)} A$ 
has the form $(W_T(\mbf{b}))_{T \in \mcal{F}(G)}$ 
for a unique $\mbf{b} \in \prod_{T \in \mcal{F}(G)} A$. 
\end{xmp}

The most important application of Theorem \ref{invertthm} is to the ring 
$A = \QQ[\underline{X},\underline{Y}]$ and the vectors 
$(W_T(\underline{X}) + W_T(\underline{Y}))_{T \in \mcal{F}(G)}$ and 
$(W_T(\underline{X})W_T(\underline{Y}))_{T \in \mcal{F}(G)}$. 
It tells us there are unique families of polynomials $\{S_T(\underline{X},\underline{Y})\}$ and 
$\{M_T(\underline{X},\underline{Y})\}$ in 
$\QQ[\underline{X},\underline{Y}]$ satisfying 
$$
W_T(\underline{X}) + W_T(\underline{Y}) = W_T(\underline{S}) \text{ for all } T \in \mathcal{F}(G)
$$
and 
$$
W_T(\underline{X})W_T(\underline{Y}) = W_T(\underline{M}) \text{ for all } T \in \mathcal{F}(G).
$$
More explicitly, this says 
\begin{equation}\label{STF}
\sum_{U \leq T} \varphi_T(U) X_U^{\# T / \#U} + \sum_{U \leq T} \varphi_T(U) Y_U^{\# T / \#U} = 
\sum_{U \leq T} \varphi_T(U) S_U^{\# T / \#U}
\end{equation}
and
\begin{equation}\label{MTF}
\left(\sum_{U \leq T} \varphi_T(U) X_U^{\# T / \#U}\right)\left(\sum_{U \leq 
T} \varphi_T(U) Y_U^{\# T / \#U}\right) = 
\sum_{U \leq T} \varphi_T(U) M_U^{\# T / \#U}
\end{equation}
for all $T$.  
The polynomials $S_T$ and $M_T$ each only 
depend on the variables $X_U$ and $Y_U$ for $U \leq T$. 

A significant theorem of Dress and Siebeneicher \cite[p.~107]{DS}, which generalizes Witt's theorem 
($G = \ZZ_p$), says that
the polynomials $S_T$ and $M_T$ have coefficients in $\ZZ$. 
We call the $S_T$'s and $M_T$'s the Witt addition and multiplication polynomials, respectively. 
(Obviously they depend on $G$, but that dependence will not be part of the notation). 

\begin{xmp}
\label{xmp:MTST}
Taking $T = 0$, one has 
$$
S_0(\underline{X},\underline{Y}) = X_0 + Y_0 \tn{ and} \
M_0(\underline{X},\underline{Y}) = X_0Y_0.
$$  
If $T \cong G/H$ where $H$ is a maximal open 
subgroup, so $\{U \in \mcal{F}(G) \colon U \leq T\}$ is just $\{0,T\}$ solving for $S_T$ and $M_T$ in 
(\ref{STF}) and (\ref{MTF}) yields
$$
S_T = X_T + Y_T + 
\frac{(X_0+Y_0)^{\#T} - X_0^{\#T} - Y_0^{\#T}}{\varphi_T(T)},$$
$$
M_T = X_0^{\#T}Y_T + X_TY_0^{\#T} + \varphi_T(T)X_TY_T. 
$$
Compare with the first two classical Witt addition and multiplication 
polynomials in \cite[p.~42]{LF}. 
Further addition and multiplication 
polynomials could be very complicated to write out explicitly, as is already apparent 
for the classical Witt vectors if you try to go past the first two polynomials. 
\end{xmp}


Since $S_T$ and $M_T$ have integral coefficients, they can be evaluated on any ring, including 
rings where the hypotheses of Theorem \ref{invertthm} break down, like a ring of characteristic $p$ 
when $G$ is a pro-$p$ group. 

\begin{dff}
Let $G$ be a profinite group. 
For any commutative ring $A$, define the {\it Witt--Burnside ring} $\mbf{W}_G(A)$ to be 
the product space $\prod_{T \in \mathcal{F}(G)} A$ as a set, with 
elements written as $\mbf{a} = (a_T)_{T \in \mathcal{F}(G)}$. 
The ring operations on $\mbf{W}_G(A)$ are 
defined using the Witt addition and multiplication polynomials:
$$
\mbf{a} + \mbf{b} = (S_T(\mbf{a},\mbf{b}))_{T \in \mathcal{F}(G)}
$$
and
$$
\mbf{a} \cdot \mbf{b} = (M_T(\mbf{a},\mbf{b}))_{T \in \mathcal{F}(G)}.
$$
The additive (resp. multiplicative) identity is $(0,0,0,\dots)$ (resp. $(1,0,0,\dots)$). 
\end{dff}

We remark that $\W_G(A) = \varprojlim_{N} \W_{G/N}(A)$ where the inverse limit runs over open normal subgroups of $G$ and, that giving each $\W_{G/N}(A)$ the discrete topology induces a natural profinite topology on $\W_G(A)$ in which $\W_G(A)$ is complete, which follows from the proof of \cite[Thm 3.3.2]{DS}, see also \cite[pg. 357]{Elliott}. Even when $G$ is not abelian, $\W_G(A)$ is a commutative ring.
For $G = \mbf{Z}_p$, the addition and multiplication polynomials are the 
classical $p$-typical Witt addition and multiplication polynomials 
and $\mbf{W}_{\ZZ_p}(A)$ is the $p$-typical Witt vectors.

For any ring homomorphism $f \colon A \rightarrow B$ 
define $\W_G(f) \colon \W_G(A) \rightarrow \W_G(B)$ 
by applying $f$ to the coordinates: 
$$
\W_G(f)(\mbf{a}) = (f(a_T))_{T \in \mcal{F}(G)} \in \W_G(B).
$$
This is a ring homomorphism and makes $\W_G$ a covariant functor from commutative rings to commutative rings.


Packaging all the Witt polynomials together, we get a ring homomorphism 
$W : \W_G(A) \to \prod_{T \in \mcal{F}(G)} A$ which is $W_T$ in the $T$-th coordinate:
$$W(\mbf{a}) = (W_T(\mbf{a}))_{T \in \mathcal{F}(G)} = \left( \sum_{U \leq T} \varphi_T(U) a_U^{\# T/ \# U} \right)_{T \in \mcal{F}(G)}.$$
This homomorphism is called the {\it ghost map} and its coordinates 
$W_T(\mbf{a})$ are called the {\it ghost components} of $\mbf{a}$.  In some cases it 
is quite useless: if $G$ is pro-$p$ and $A$ has characteristic $p$ then 
$W(\mbf{a}) = (a_0^{\#T})_{T \in \mcal{F}(G)}$, whose dependence 
on $\mbf{a}$ only involves $a_0$.   
If $A$ fits the hypothesis of Theorem \ref{invertthm} then 
the ghost map is injective (i.e., the ghost components of $\mbf{a}$ determine $\mbf{a}$). 
Also from Theorem \ref{invertthm} the ghost map is bijective if every integer $\varphi_T(T)$ is invertible in $A$ so $\W_G(A) \cong \prod_{T \in \mcal{F}(G)} A$ by the ghost map.
That means $\W_G(A)$ is a new kind of ring only if 
some $\varphi_T(T)$ is not invertible in $A$, and especially 
if $A$ has $\varphi_T(T)$-torsion for some $T$ (e.g., $G$ is a nontrivial pro-$p$ group and $A$ has characteristic $p$). 

The coordinates on which a Witt vector is nonzero is called its {\it support}. While the ring operations in $\mbf{W}_G(A)$ are generally not componentwise, addition in $\mbf{W}_G(A)$ is 
componentwise on two Witt vectors with disjoint support. 

\begin{thm}
\label{thmRS}
Let $\{R,S\}$ be a partition of $\mcal{F}(G)$, i.e., $R \cup S = \mathcal{F}(G)$ and $R \cap S = \emptyset$. 
For every ring $A$ and any $\mbf{a} \in \mbf{W}_G(A)$, define $\mbf{r}(\mbf{a})$ and 
$\mbf{s}(\mbf{a})$ to be the Witt vectors derived from $\mbf{a}$ with 
support in $R$ and $S$: 
\begin{displaymath}
\mbf{r}(\mbf{a}) = 
\begin{cases}
a_T & \text{if $T \in R,$} \\
0 & \textrm{if $T \in S$,}
\end{cases} \quad \text{ and } \quad 
\mbf{s}(\mbf{a}) = 
\begin{cases}
0 & \text{if $T \in R,$} \\
a_T & \textrm{if $T \in S$.}
\end{cases}
\end{displaymath} Then $\mbf{a} = \mbf{r}(\mbf{a}) + \mbf{s}(\mbf{a})$ in $\mbf{W}_G(A)$. 
\end{thm}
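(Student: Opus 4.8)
The plan is to prove the identity first in the universal ring $A = \ZZ[\underline{X}]$, where Theorem~\ref{invertthm} applies, and then transport it to an arbitrary $A$ by functoriality. Concretely, I would let $\mbf{x} = (X_T)_{T \in \mcal{F}(G)} \in \W_G(\ZZ[\underline{X}])$ be the generic Witt vector and aim to show $\mbf{x} = \mbf{r}(\mbf{x}) + \mbf{s}(\mbf{x})$ there. Since $\ZZ[\underline{X}]$ is torsion-free over $\ZZ$ it has no $\varphi_T(T)$-torsion, so by Theorem~\ref{invertthm} the ghost map $W$ is injective on $\W_G(\ZZ[\underline{X}])$; hence it suffices to check that both sides have the same image under every $W_T$.

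For this I would compute, for a fixed $T$,
$$
W_T(\mbf{r}(\mbf{x})) = \sum_{U \leq T} \varphi_T(U)\, \mbf{r}(\mbf{x})_U^{\#T/\#U} = \sum_{\substack{U \leq T \\ U \in R}} \varphi_T(U)\, X_U^{\#T/\#U},
$$
where the terms with $U \in S$ vanish because $\mbf{r}(\mbf{x})_U = 0$ and every exponent $\#T/\#U$ is a positive integer (item~(1) in the list of facts about $\mcal{F}(G)$). Likewise $W_T(\mbf{s}(\mbf{x})) = \sum_{U \leq T,\, U \in S} \varphi_T(U)\, X_U^{\#T/\#U}$. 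Since each $W_T$ is additive on $\W_G$ and $\{R,S\}$ partitions $\mcal{F}(G)$, so that each $U \leq T$ lies in exactly one of $R$ and $S$,
$$
W_T(\mbf{r}(\mbf{x}) + \mbf{s}(\mbf{x})) = W_T(\mbf{r}(\mbf{x})) + W_T(\mbf{s}(\mbf{x})) = \sum_{U \leq T} \varphi_T(U)\, X_U^{\#T/\#U} = W_T(\mbf{x}).
$$
As this holds for all $T$, injectivity of the ghost map gives $\mbf{x} = \mbf{r}(\mbf{x}) + \mbf{s}(\mbf{x})$ in $\W_G(\ZZ[\underline{X}])$.

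Finally, for a general commutative ring $A$ and $\mbf{a} = (a_T)_{T} \in \W_G(A)$, I would apply the ring homomorphism $\W_G(f)$ induced by the substitution map $f \colon \ZZ[\underline{X}] \to A$ with $X_T \mapsto a_T$. Since $\W_G(f)$ acts coordinatewise it carries $\mbf{x}$ to $\mbf{a}$, $\mbf{r}(\mbf{x})$ to $\mbf{r}(\mbf{a})$, and $\mbf{s}(\mbf{x})$ to $\mbf{s}(\mbf{a})$, and it respects addition; applying it to the universal identity yields $\mbf{a} = \mbf{r}(\mbf{a}) + \mbf{s}(\mbf{a})$ in $\W_G(A)$. (The degenerate cases $R = \emptyset$ or $S = \emptyset$ are covered automatically.)

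There is no serious obstacle here: the one point requiring care is that $W_T$ evaluated on a vector supported in $R$ retains precisely the monomials indexed by $U \in R$ with $U \leq T$ — which relies on every exponent $\#T/\#U$ being at least $1$ — and that one must pass through the universal ring rather than trying to verify $\mbf{a} = \mbf{r}(\mbf{a}) + \mbf{s}(\mbf{a})$ coordinatewise, since addition in $\W_G(A)$ is not componentwise in general.
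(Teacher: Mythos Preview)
Your proposal is correct and follows essentially the same approach as the paper: prove the identity for the generic vector $\mbf{x}=(X_T)$ over $\ZZ[\underline{X}]$ by comparing ghost components (using injectivity of the ghost map there), then specialize via the substitution homomorphism $X_T\mapsto a_T$ and functoriality of $\W_G$. The only difference is cosmetic---you invoke Theorem~\ref{invertthm} directly while the paper phrases it as ``the ghost map is injective since $\ZZ[\underline{X}]$ is a domain of characteristic $0$''---and your explicit note that $\#T/\#U\geq 1$ ensures the vanishing of the unwanted terms is a helpful clarification.
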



\begin{proof} 
First we will show the result in $\mbf{W}_G(\mbf{Z}[\underline{X}])$ for the particular Witt vector 
$\mbf{x} = (X_T)_{T \in \mcal{F}(G)}$: $\mbf{x} = \mbf{r}(\mbf{x}) + \mbf{s}(\mbf{x})$ in $\mbf{W}_G(\mbf{Z}[\underline{X}])$. 
If we prove this then given any ring $A$ and $\mbf{a} \in \mbf{W}_G(A)$, 
there is a ring homomorphism 
$f \colon \ZZ[\underline{X}] \rightarrow A$ such that $f(X_T) = a_T$ for 
all $T$, and applying the ring homomorphism 
$\W_G(f) \colon \W_G(\ZZ[\underline{X}]) \rightarrow \W_G(A)$ to 
the identity 
$\mbf{x} = \mbf{r}(\mbf{x}) + \mbf{s}(\mbf{x})$
turns it into 
$\mbf{a} = \mbf{r}(\mbf{a}) + \mbf{s}(\mbf{a})$.

Since $\ZZ[\underline{X}]$ is a domain of characteristic $0$, the ghost map 
$$W \colon \W_G(\ZZ[\underline{X}]) \rightarrow \prod_{T \in \mcal{F}(G)} \ZZ[\underline{X}]$$ 
is an injective ring homomorphism, so it suffices to prove $$W(\mbf{x}) = W(\mbf{r}(\mbf{x}) + \mbf{s}(\mbf{x})).$$ 
The right side is $W(\mbf{r}(\mbf{x})) + W(\mbf{s}(\mbf{x}))$, which is a 
sum in the product ring 

\noindent $\prod_{T \in \mcal{F}(G)} \ZZ[\underline{X}]$, so its 
$T$-th coordinate for any $T$ is 
$$
W_T(\mbf{r}(\mbf{x}))  + W_T(\mbf{s}(\mbf{x})) = 
\sum_{\stackrel{U \leq T}{U \in R}} \varphi_T(U) X_U^{\# T/ \# U} + 
\sum_{\stackrel{U \leq T}{U \in S}} \varphi_T(U) X_U^{\# T/ \# U}.
$$
Since $R \cup S = \mcal{F}(G)$ and $R \cap S = \emptyset$, 
each $U$ with $U \leq T$ will lie in exactly one of $R$ or $S$, so 
$$
W_T(\mbf{r}(\mbf{x}))  + W_T(\mbf{s}(\mbf{x})) = 
\sum_{U \leq T} \varphi_T(U) X_U^{\# T/ \# U} = W_T(\mbf{x}). 
$$
Therefore
$W(\mbf{r}(\mbf{x}) + \mbf{s}(\mbf{x}))$ and 
$W(\mbf{x})$ have the same $T$-th component for all $T$, so they 
are equal, which shows 
$\mbf{r}(\mbf{x}) + \mbf{s}(\mbf{x}) = \mbf{x}$. 
\end{proof} 

One typically proves an algebraic identity in $\W_G(A)$ by reformulating it 
as an identity in a ring of Witt vectors over a polynomial ring over $\ZZ$. From now on, we will usually prove the reformulation but 
may not go through the deduction of the identity we want over $A$ from the identity proved over a 
polynomial ring; instead simply invoke functoriality.


\begin{dff}\label{TTeich}
For $a \in A$ and $T \in \mcal{F}(G)$, denote by $\omega_T(a) \in \W_G(A)$ the Witt vector with $T$-coordinate $a$ and all other coordinates $0$. We call $\omega_T(a)$ the $T$-th {\it{Teichm\"uller lift}} of $a$. 
\end{dff}

We denote the trivial $G$-set $G/G$ as $0$ and by $\omega_0(a)$ the $G/G$-th Teichm\"uller lift. The function $\omega_0 \colon A \rightarrow \W_G(A)$ generalizes the classical Teichm\"uller lift. Like the classical Teichm\"uller lift, $\omega_0$ is multiplicative. For a general formula for $\omega_T(a)\omega_{T'}(b)$, see \cite[p. 355]{Elliott}. 

An easy consequence of Theorem \ref{thmRS} is that any Witt vector $\mbf{a}$ of finite support satisfies $\mbf{a} = \sum_{U \in {\tn{Supp}}(\mbf{a})} \omega_U(a_U)$ where $\tn{Supp}(\mbf{a})$ is the support of $\mbf{a}$. 

\begin{thm}\label{multx}
For any $a \in A$ and $\mbf{b} \in \W_G(A)$, 
$$
\omega_0(a) \mbf{b} = (a^{\#T}b_T)_{T \in \mcal{F}(G)}.
$$ In particular, $ \omega_0(a)\omega_0(b) = \omega_0(ab)$.
\end{thm}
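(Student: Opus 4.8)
The plan is to prove the identity by the standard device (already used for Theorem~\ref{thmRS}): verify it on ghost components over a generic polynomial ring of characteristic $0$, then push it to an arbitrary $A$ by functoriality. The only twist is that the element $a$ must also be recorded by a variable. So I would introduce one new indeterminate $t$ alongside the tuple $\underline{X}$ and work inside $\W_G(\ZZ[t,\underline{X}])$ with the two specific Witt vectors $\omega_0(t)$ and $\mbf{x} = (X_T)_{T \in \mcal{F}(G)}$. Since $\ZZ[t,\underline{X}]$ is a domain of characteristic $0$, Theorem~\ref{invertthm} says the ghost map $W \colon \W_G(\ZZ[t,\underline{X}]) \to \prod_{T} \ZZ[t,\underline{X}]$ is injective, so it is enough to check that $\omega_0(t)\odot\mbf{x}$ and $(t^{\#T}X_T)_{T \in \mcal{F}(G)}$ have the same image under $W$.

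For the left-hand side, $W_T$ is a ring homomorphism, so $W_T(\omega_0(t)\odot\mbf{x}) = W_T(\omega_0(t))\,W_T(\mbf{x})$; and since every coordinate of $\omega_0(t)$ other than the $0$-th vanishes, $\varphi_T(0)=1$, and $\#0 = 1$, one gets $W_T(\omega_0(t)) = t^{\#T}$, hence $W_T(\omega_0(t)\odot\mbf{x}) = t^{\#T}W_T(\mbf{x})$. For the right-hand side the computation is the short expansion
$$
W_T\big((t^{\#U}X_U)_U\big) = \sum_{U \leq T}\varphi_T(U)\big(t^{\#U}X_U\big)^{\#T/\#U} = \sum_{U \leq T}\varphi_T(U)\,t^{\#T}\,X_U^{\#T/\#U} = t^{\#T}W_T(\mbf{x}),
$$
using $\#U\cdot(\#T/\#U) = \#T$. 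The two sides agree for every $T$, so injectivity of the ghost map gives $\omega_0(t)\odot\mbf{x} = (t^{\#T}X_T)_{T}$ in $\W_G(\ZZ[t,\underline{X}])$.

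Finally, apply the functor $\W_G$ to the ring homomorphism $f\colon \ZZ[t,\underline{X}] \to A$ with $f(t) = a$ and $f(X_T) = b_T$: since $\W_G(f)$ acts coordinatewise it carries $\omega_0(t) \mapsto \omega_0(a)$, $\mbf{x} \mapsto \mbf{b}$, and $(t^{\#T}X_T)_T \mapsto (a^{\#T}b_T)_T$, turning the identity into $\omega_0(a)\,\mbf{b} = (a^{\#T}b_T)_{T \in \mcal{F}(G)}$. For the last assertion, take $\mbf{b} = \omega_0(b)$: the $0$-th coordinate of the product is $a^{\#0}b = ab$ and every other coordinate is $a^{\#T}\cdot 0 = 0$, so $\omega_0(a)\omega_0(b) = \omega_0(ab)$. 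I expect no real obstacle here; the one point needing a moment's care is recognizing that $\omega_0(t)$ has constant ghost components $t^{\#T}$, which is precisely what forces the product to be coordinatewise multiplication by $a^{\#T}$.
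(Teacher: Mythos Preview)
Your proposal is correct and follows essentially the same approach as the paper: verify the identity on ghost components over a characteristic-zero polynomial ring (where the ghost map is injective by Theorem~\ref{invertthm}), then transport it to arbitrary $A$ by functoriality. The only cosmetic difference is that the paper works in $\ZZ[\underline{X},\underline{Y}]$ using $X_0$ as the distinguished variable and $(Y_T)$ as the generic Witt vector, whereas you introduce a single extra indeterminate $t$ alongside $\underline{X}$; the computations are otherwise identical.
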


\begin{proof}
By functoriality, it suffices to show in $\W_G(\ZZ[\underline{X},\underline{Y}])$ that
$$
(X_0, 0, 0, 0, \dots)(Y_T)_{T \in \mcal{F}(G)} = (X_0^{\#T}Y_T)_{T \in \mcal{F}(G)}, 
$$
and to show this equation it 
suffices to prove the ghost components (Witt polynomial values) of both sides are equal.
Since $W_T \colon \W_G(\ZZ[\underline{X},\underline{Y}]) \rightarrow \ZZ[\underline{X},\underline{Y}]$ is multiplicative, 
\begin{eqnarray*}
W_T((X_0, 0, 0, 0, \dots)(Y_U)_{U \in \mcal{F}(G)}) & = & 
W_T(X_0, 0, 0, 0, \dots)W_T((Y_U)_{U \in \mcal{F}(G)})  \\
& = & X_0^{\#T}\sum_{U \leq T} \varphi_T(U)Y_U^{\#T/\#U} 
\end{eqnarray*}
and
\begin{eqnarray*}
W_T((X_0^{\#U}Y_U)_{U \in \mcal{F}(G)}) &=& \sum_{U \leq T} \varphi_T(U) (X_0^{\#U}Y_U)^{\#T/\#U} \\
& = & \sum_{U \leq T} \varphi_T(U) X_0^{\#T}Y_U^{\#T/\#U}. 
\end{eqnarray*}
\end{proof}

The Witt polynomial $W_T(\underline{X})$ 
becomes homogeneous of degree $\#T$ if we give $X_U$ degree $\#U$ (e.g., $X_0$ has degree $1$, not $0$).  This grading makes 
the addition and multiplication polynomials homogeneous as well: 

\begin{thm}
\label{thm:integralwitt}
Give the ring $\mbf{Z}[\underline{X},\underline{Y}]$ the grading in which the degree of $X_U$ and $Y_U$ is $\# U$. 

\begin{enumerate}
\item[$(a)$] 
For all $T$, the polynomial $S_T$ is homogeneous of degree $\# T$ and $M_T$ is homogeneous of degree $2\#T$. 

\item[$(b)$]  For all $T$, we have $S_T(\underline{X},\mbf{0}) = X_T$, 
$S_T(\mbf{0},\underline{Y}) = Y_T$, 
$M_T(\underline{X},\mbf{0}) = 0$, and $M_T(\mbf{0},\underline{Y}) = 0$.
\end{enumerate}
\end{thm}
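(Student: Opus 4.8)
The engine of the proof is the observation that, in the stated weighting, each Witt polynomial $W_T(\underline X)$ from (\ref{Wdef}) is homogeneous of degree $\#T$: the monomial $X_U^{\#T/\#U}$ has weighted degree $\#U\cdot(\#T/\#U)=\#T$, and this computation makes sense because $\#U\mid\#T$ whenever $U\le T$ (Lemma \ref{lem:fiber}), so $\#T/\#U$ is a positive integer. Giving both $X_U$ and $Y_U$ degree $\#U$ in $\ZZ[\underline X,\underline Y]$, it follows that $W_T(\underline X)+W_T(\underline Y)$ is homogeneous of degree $\#T$ and $W_T(\underline X)\,W_T(\underline Y)$ is homogeneous of degree $2\#T$.

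For part $(a)$ the plan is to induct on $\#T$, which is legitimate since $U<T$ forces $\#U<\#T$ (as already used in the proof of Theorem \ref{invertthm}). The base case $T=0$ is immediate from Example \ref{xmp:MTST}: $S_0=X_0+Y_0$ has degree $1=\#0$ and $M_0=X_0Y_0$ has degree $2=2\#0$. For the inductive step, solve (\ref{STF}) for the linear term to get $\varphi_T(T)S_T=\big(W_T(\underline X)+W_T(\underline Y)\big)-\sum_{U<T}\varphi_T(U)S_U^{\#T/\#U}$. The first summand is homogeneous of degree $\#T$ by the paragraph above; by the inductive hypothesis each $S_U$ with $U<T$ is homogeneous of degree $\#U$, so $S_U^{\#T/\#U}$ is homogeneous of degree $(\#T/\#U)\#U=\#T$. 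Hence the whole right-hand side is homogeneous of degree $\#T$, and since $\varphi_T(T)$ is a nonzero integer (and $S_T$ has rational — in fact, by Dress--Siebeneicher, integral — coefficients), $S_T$ itself is homogeneous of degree $\#T$. The identical computation applied to (\ref{MTF}), with $2\#T$ in place of $\#T$ throughout, gives that $M_T$ is homogeneous of degree $2\#T$.

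For part $(b)$ I would substitute $\underline Y=\mbf 0$ into the defining identities. Since every exponent $\#T/\#U$ with $U\le T$ is a positive integer, $0^{\#T/\#U}=0$, so (\ref{STF}) degenerates to $\sum_{U\le T}\varphi_T(U)X_U^{\#T/\#U}=\sum_{U\le T}\varphi_T(U)S_U(\underline X,\mbf 0)^{\#T/\#U}$ for all $T$; that is, the Witt vectors $\mbf x=(X_U)_U$ and $(S_U(\underline X,\mbf 0))_U$ in $\W_G(\QQ[\underline X])$ have equal ghost components, so Theorem \ref{invertthm} (applied over the torsion-free ring $\QQ[\underline X]$) gives $S_T(\underline X,\mbf 0)=X_T$ for all $T$. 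Likewise, setting $\underline Y=\mbf 0$ in (\ref{MTF}) makes the left side vanish, because its second factor is $\sum_{U\le T}\varphi_T(U)\,0^{\#T/\#U}=0$; thus $(M_U(\underline X,\mbf 0))_U$ has all ghost components equal to $0$, whence $M_T(\underline X,\mbf 0)=0$ by Theorem \ref{invertthm} again. The two identities with $\underline X=\mbf 0$ then follow from the symmetry $S_T(\underline X,\underline Y)=S_T(\underline Y,\underline X)$ and $M_T(\underline X,\underline Y)=M_T(\underline Y,\underline X)$ established earlier (alternatively, observe that $\mbf 0$ is the additive identity and is absorbing for multiplication, and read off coordinates).

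I do not expect a genuine obstacle: the whole argument is bookkeeping with the weighted degree. The only points that need a little care are that the induction in $(a)$ is well-founded and that every exponent $\#T/\#U$ with $U\le T$ is a positive integer, so that $0^{\#T/\#U}=0$ and the degree count $\deg\big(X_U^{\#T/\#U}\big)=\#T$ are both valid; these come from Lemma \ref{lem:fiber} together with the remark that $U<T$ implies $\#U<\#T$.
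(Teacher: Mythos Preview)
Your proof is correct and, for part~(a), essentially identical to the paper's: both induct on $\#T$ by isolating the top term in (\ref{STF}) or (\ref{MTF}) and checking that every summand on the right is homogeneous of the claimed degree (the paper writes out $M_T$ and declares $S_T$ similar, you do the reverse).

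For part~(b) you take a slightly slicker route than the paper. The paper substitutes $\underline{Y}=\mbf{0}$ into the recursive formula for $M_T$ derived in the proof of~(a) and runs a second induction on $\#T$; you instead substitute $\underline{Y}=\mbf{0}$ directly into (\ref{STF}) and (\ref{MTF}), observe that the resulting equations say exactly that the ghost components of $(S_U(\underline{X},\mbf{0}))_U$ agree with those of $(X_U)_U$ (respectively that those of $(M_U(\underline{X},\mbf{0}))_U$ vanish), and invoke Theorem~\ref{invertthm} over a torsion-free polynomial ring to conclude. This is the same induction under the hood---Theorem~\ref{invertthm} is itself proved by inducting on $\#T$---but packaging it as ``ghost-map injectivity'' avoids rewriting the recursion and makes the argument shorter.
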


The second part of the theorem is saying 
$S_T$ equals $X_T + Y_T$ plus monomials $X_U^iY_V^j$ where $U < T$ and $V < T$ (we cannot have $U = T$ or $V = T$ by homogeneity), while 
$M_T$ contains no monomials that are pure $\underline{X}$-terms or pure $\underline{Y}$-terms. 

\begin{proof}
(a) We will work out the homogeneity for $M_T$; the argument for $S_T$ is similar. 
Clearly $M_0(X_0,Y_0) = X_0Y_0$, which is homogeneous of degree 2 and its only monomial term contains the factors $X_0$ and $Y_0$. Let $n \geq 2$ and assume by induction for all transitive $G$-sets $U$ with $\# U < n$ that  $M_U$ is homogeneous of degree $2\#U$. Pick a transitive $G$-set $T$ with $\# T = n$. (If there are no such $G$-sets then we are vacuously done.) Solving for $M_T$ in (\ref{MTF}) in $\QQ[\underline{X},\underline{Y}]$,  
\begin{eqnarray*}
M_T & = & 
\frac{1}{\varphi_T(T)} \left( \sum_{U_1 \leq T} \varphi_T(U_1)X_{U_1}^{ \frac{\# T}{\# U_1}}\sum_{U_2 \leq T}  \varphi_T(U_2)Y_{U_2}^{\frac{\# T}{\# U_2}} - \sum_{U < T} \varphi_T(U) M_U^{\frac{\# T }{\# U}} \right) \\
& = & \frac{1}{\varphi_T(T)} \left( \sum_{U_1,U_2 \leq T} \varphi_T(U_1)\varphi_T(U_2) X_{U_1}^{\frac{\# T}{\# U_1}} Y_{U_2}^{\frac{\# T}{\# U_2}} - \sum_{U < T} \varphi_T(U) M_U^{\frac{\# T}{\# U}} \right).
\end{eqnarray*} 
By the inductive hypothesis, 
each $M_U$ for $U < T$ is homogeneous of degree $2\#U$, so $M_U^{\#T/\#U}$ is homogeneous of 
degree $2\#T$. 
By the definition of the grading, $X_{U_1}^{\#T/\#U_1}$ has degree $\#T$ and 
$Y_{U_2}^{\#T/\#U_2}$ has degree $\#T$, so 
$X_{U_1}^{\#T/\# U_1} Y_{U_2}^{\# T / \# U_2}$ has degree  $2\#T$.

(b) We will work out the result for multiplication polynomials. Since $M_0(\underline{X},\mbf{0})$ is $X_00 = 0$, we may assume $T \neq 0$.  Set every $Y_U$ to $0$ in the recursive formula for $M_T$ above. Then the formula 
tells us $M_T(\underline{X},\mbf{0})$ is equal to 
$$
\frac{1}{\varphi_T(T)} \left( \sum_{U_1,U_2 \leq T} \varphi_T(U_1)\varphi_T(U_2) X_{U_1}^{\#T/\# U_1}\cdot 0 - \sum_{U < T} \varphi_T(U) M_U(\underline{X},\mbf{0})^{\# T / \#U } \right). 
$$
Using the inductive hypothesis, each term is 0. 
\end{proof}

So the polynomials $S_T( \{X_U^{\# U}, Y_U^{\# U}\}_{U \in \mcal{F}(G)})$ are genuine homogeneous polynomials of degree $\# T$ (replace $X_U$ and $Y_U$ with
their $\# U$-th powers everywhere), and similarly for the multiplication polynomials.

\subsection{Units}

The present goal is to demonstrate that $\W_G(k)$ is a local ring when $G$ is pro-$p$ and $k$ is a field of characteristic $p$. We state a couple of needed lemmas about divisibility relationships among $\varphi_T(U)$ for $U, T \in \mcal{F}(G)$ whose proofs are straightforward and left to the reader.

\begin{lem}
\label{lem:propdivis}
If $G$ is a pro-$p$ group and $T \geq U$ in $\mcal{F}(G)$ with $U$ nontrivial, then $\varphi_T(U) \equiv 0 \bmod p$.
\end{lem}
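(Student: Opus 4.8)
The plan is to reduce the statement to the remark immediately preceding it, namely that $\varphi_U(U) \equiv 0 \bmod p$ for nontrivial $U$ (which holds because $\Aut_G(U) \cong \mathrm{N}_G(H)/H$ is a nontrivial $p$-group when $U \cong G/H$ with $H$ a proper open subgroup), by exhibiting a natural factorization of $\varphi_T(U)$ through $\varphi_U(U)$. Concretely, fix a $G$-map $\pi \colon T \to U$; since $T \geq U$, such a $\pi$ exists. The key observation is that any $G$-map $f \colon T \to U$ differs from $\pi$ by postcomposition with a $G$-automorphism of $U$: the assignment $\alpha \mapsto \alpha \circ \pi$ defines a map $\Aut_G(U) \to \Map_G(T,U)$. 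First I would show this map is injective (if $\alpha \circ \pi = \alpha' \circ \pi$ then $\alpha = \alpha'$ since $\pi$ is surjective, $T$ and $U$ being transitive $G$-sets) and that its image is a coset-like piece of $\Map_G(T,U)$; then I would partition $\Map_G(T,U)$ into the fibers of the ``which base map'' relation and count.

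The cleanest way to organize the count: let $H \leq K$ be open subgroups with $T \cong G/H$ and $U \cong G/K$ (we may choose representatives so that the stabilizer of the basepoint of $T$ maps into the stabilizer of the basepoint of $U$, which is exactly the condition $H \subseteq K$ after conjugating, legitimate since $T \geq U$ means $K$ is conjugate into $H$... I would instead phrase it as: pick the specific $\pi$ sending $gH \mapsto gK$). A $G$-map $T \to U$ is determined by the image of the basepoint $eH$, which can be any point $gK$ of $U$ such that $H \subseteq gKg^{-1}$, and the set of such points is a union of full orbits under the right $\mathrm{N}_G(K)/K$-action... Actually the slick route is: $\varphi_T(U) = \#\Map_G(G/H, G/K)$ and there is a standard bijection $\Map_G(G/H,G/K) \cong \{\, gK : g^{-1}Hg \subseteq K \,\}$. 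I would show this set is stable under the free right action of $\Aut_G(G/K) \cong \mathrm{N}_G(K)/K$ given by $gK \mapsto gnK$, hence its cardinality $\varphi_T(U)$ is divisible by $\#\Aut_G(U) = \varphi_U(U)$. Since $U$ is nontrivial, $\varphi_U(U)$ is a nontrivial power of $p$ by the discussion preceding the lemma, so $p \mid \varphi_U(U) \mid \varphi_T(U)$, giving the claim.

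The main obstacle is verifying that the right $\mathrm{N}_G(K)/K$-action on the set $\{\, gK : g^{-1}Hg \subseteq K \,\}$ is well-defined and free: well-definedness requires checking that if $g^{-1}Hg \subseteq K$ and $n \in \mathrm{N}_G(K)$ then $(gn)^{-1}H(gn) \subseteq K$, which follows since $(gn)^{-1}H(gn) = n^{-1}(g^{-1}Hg)n \subseteq n^{-1}Kn = K$; freeness follows because $gnK = gK$ forces $n \in K$. Once this is in place the divisibility is immediate, and the reduction to the preceding remark (which handles the $U = T$ base case, and more generally supplies $p \mid \varphi_U(U)$) closes the argument. I would keep the exposition short since each verification is a one-line group-theory check.
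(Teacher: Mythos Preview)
Your argument is correct and rests on the same core observation as the paper's proof: the group $\Aut_G(U)$ acts on $\Map_G(T,U)$ by postcomposition, and the surjectivity of any $G$-map $T \to U$ forces this action to have trivial stabilizers. The paper packages this slightly differently: rather than concluding freeness and hence $\varphi_U(U) \mid \varphi_T(U)$, it invokes the fixed-point congruence for $p$-group actions to get $\varphi_T(U) \equiv \#\mathrm{Fix} \equiv 0 \pmod p$ directly, after observing that a fixed $f$ (fixed by \emph{every} $\alpha$) would force $\Aut_G(U)$ to be trivial. Your route via freeness is marginally more informative, since it yields the sharper divisibility $\varphi_U(U) \mid \varphi_T(U)$ (which the paper does not state here), at the cost of spelling out the coset-space description; the paper's route is a one-liner once the fixed-point congruence is granted. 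Either way the substance is identical.
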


\begin{lem}
\label{lem:cong}
Let $G$ be a nontrivial pro-$p$ group, $U < T$ in $\mcal{F}(G)$ and $s \in p\ZZ.$ Then 
$$
\frac{\varphi_T(U)}{\varphi_T(T)} s^{\# T / \# U } \in p\ZZ.
$$
\end{lem}

We apply these to study a particularly important family of ideals in $\W_G(A)$. 

\begin{dff}\label{Indef}
For $n \in \mbf{Z}^+$, set $$I_n(G,A) = \{ \mbf{a} \in \W_G(A) : a_T = 0 \mbox{ for }  \# T < n \}.$$
\end{dff}

These are the Witt vectors $\mbf{a}$ with support in $\{ T : \# T \geq n \}$. 

\begin{lem}
\label{lem:fil1}
Each $I_n(G,A)$ is an ideal in $\mbf{W}_G(A)$. 
\end{lem}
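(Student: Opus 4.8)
The plan is to show $I_n(G,A)$ is closed under addition and under multiplication by arbitrary Witt vectors, using the homogeneity properties established in Theorem \ref{thm:integralwitt}. The key tool is that $S_T$ and $M_T$ are homogeneous of degrees $\#T$ and $2\#T$ respectively, where the variable $X_U$ (and $Y_U$) is assigned degree $\#U$.

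First I would handle addition. Suppose $\mbf{a}, \mbf{b} \in I_n(G,A)$, so $a_U = b_U = 0$ whenever $\#U < n$. Fix $T$ with $\#T < n$; I want $(\mbf{a}+\mbf{b})_T = S_T(\mbf{a},\mbf{b}) = 0$. By Theorem \ref{thm:integralwitt}(a), $S_T$ is homogeneous of degree $\#T < n$ in the grading where $X_U, Y_U$ have degree $\#U$. Every monomial of $S_T$ therefore has total weighted degree $\#T < n$, which forces every variable $X_U$ or $Y_U$ appearing in that monomial to satisfy $\#U \leq \#T < n$. Substituting $X_U = a_U$, $Y_U = b_U$, each such variable is $0$, so every monomial vanishes and $S_T(\mbf{a},\mbf{b}) = 0$. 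Hence $\mbf{a} + \mbf{b} \in I_n(G,A)$. (One should also note $I_n$ contains $\mbf{0}$ and is closed under negation; negation is multiplication by $-1 = \omega_0(-1)$, which is covered by the multiplication step, or one argues directly with $S_T$ and the additive inverse polynomials.)

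Next I would handle the absorption property: if $\mbf{a} \in I_n(G,A)$ and $\mbf{b} \in \W_G(A)$ is arbitrary, then $\mbf{a} \cdot \mbf{b} \in I_n(G,A)$. Fix $T$ with $\#T < n$; I want $M_T(\mbf{a},\mbf{b}) = 0$. By Theorem \ref{thm:integralwitt}(b), $M_T$ contains no pure $\underline{X}$-monomials and no pure $\underline{Y}$-monomials, so every monomial of $M_T$ involves at least one variable $X_U$. By the homogeneity of Theorem \ref{thm:integralwitt}(a), $M_T$ is homogeneous of weighted degree $2\#T$; since every monomial contains a $Y$-variable of weighted degree at least $1$ (in fact at least $\#0 = 1$), the $X$-variables in that monomial contribute weighted degree at most $2\#T - 1 < 2\#T$. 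But more simply: if a monomial contains $X_U$, then that factor alone has weighted degree $\#U \leq 2\#T$; I need the sharper bound that some $X_U$ in the monomial has $\#U \leq \#T < n$. This is where a little care is needed, so that is the main obstacle.

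The main obstacle is making the degree bookkeeping for $M_T$ precise. The cleanest route is to observe that $M_T$ is homogeneous of weighted degree $2\#T$ and is separately homogeneous in the $\underline{X}$ variables and in the $\underline{Y}$ variables — indeed from the recursion $W_T(\underline X)W_T(\underline Y) = W_T(\underline M)$ each $M_T$ is homogeneous of degree $\#T$ in $\underline X$ alone (assigning $X_U$ degree $\#U$) and of degree $\#T$ in $\underline Y$ alone; this bihomogeneity can be proved by the same induction as Theorem \ref{thm:integralwitt}(a). Granting that, any monomial of $M_T$ has its $\underline X$-part of weighted degree exactly $\#T < n$, so it is a product of $X_U$'s with $\sum \#U = \#T < n$, and in particular some (indeed every) $X_U$ occurring has $\#U \leq \#T < n$. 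Then $a_U = 0$ for all those $U$, the monomial vanishes, and $M_T(\mbf{a},\mbf{b}) = 0$. Therefore $\mbf{a}\cdot\mbf{b} \in I_n(G,A)$, and combined with the additive closure this proves $I_n(G,A)$ is an ideal. If one prefers to avoid proving bihomogeneity separately, an alternative is to argue directly from the recursive formula for $M_T$ in the proof of Theorem \ref{thm:integralwitt}, tracking that every term retains an $X$-variable of small index, but the bihomogeneity statement packages this most transparently.
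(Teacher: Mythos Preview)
Your argument is correct, but for the multiplicative absorption step the paper takes a shorter path that avoids the bihomogeneity detour. The paper simply uses two facts already on record: (i) $M_T(\underline{X},\underline{Y})$ only involves $X_U,Y_U$ for $U\leq T$ (stated just after (\ref{MTF})), and (ii) $M_T(\underline{X},\mbf{0})=0$ (Theorem~\ref{thm:integralwitt}(b)). If $\mbf{b}\in I_n(G,A)$ and $\#T<n$, then $b_U=0$ for every $U\leq T$, so $M_T(\mbf{a},\mbf{b})=M_T(\mbf{a},\mbf{0})=0$ directly. Your bihomogeneity claim is true and its inductive proof is fine, but it is extra machinery: once you know $M_T$ only sees indices $U\leq T$, every $Y_U$ (or $X_U$) appearing already has $\#U\leq\#T<n$, and the ``no pure $\underline{X}$-monomials'' part of Theorem~\ref{thm:integralwitt}(b) finishes the job without tracking bidegrees. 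For the additive closure your homogeneity argument and the paper's ``$S_T$ depends only on $U\leq T$ and has no constant term'' are essentially the same observation phrased differently.
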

\begin{proof}
Each addition polynomial $S_T$ depends only on variables indexed by (isomorphism classes of) finite transitive $G$-sets $U \leq T$ and 
has no constant term by Theorem \ref{thm:integralwitt}. So if two Witt vectors are in $I_n(G,A)$ then their sum is also in $I_n(G,A)$. 

It remains to show for any $\mbf{a} \in \W_G(A)$ and $\mbf{b} \in I_n(G,A)$ that $\mbf{a}\mbf{b} \in I_n(G,A)$. 
Set $\mbf{c} = \mbf{a}\mbf{b}$.
By the definition of multiplication in $\W_G(A)$, $c_T = M_T(\mbf{a},\mbf{b})$ for any $T$. 
If $\# T < n$, $b_U = 0$ for $U \leq T$ by hypothesis.  Since 
$M_T(\underline{X},\underline{Y})$ only depends on $Y_U$ for $U \leq T$, 
$c_T = M_T(\mbf{a},\mbf{b}) = M_T(\mbf{a},\mbf{0})$ and 
$M_T(\mbf{a},\mbf{0}) = 0$ by 
Theorem \ref{thm:integralwitt}.
\end{proof}

\begin{lem}
\label{lem:weakIFill}
Let $G$ be a pro-$p$ group. For all rings $A$ of characteristic $p$ and nonnegative integers $n$, $$I_{p}(G,A) I_{p^n}(G,A) \subset I_{p^{n+1}}(G,A)$$ in $\W_G(A)$. 
\end{lem}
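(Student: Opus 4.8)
The plan is to run the argument of Lemma~\ref{lem:Fill} essentially verbatim, but to exploit the hypothesis $m=1$ so as to avoid the ratio property, which was the only ingredient that required that assumption. First I would dispose of the trivial cases: if $n=0$ then $I_1(G,A)=\W_G(A)$ and the claim reduces to $I_p(G,A)\W_G(A)\subseteq I_p(G,A)$, and if $G$ is the trivial group then $I_p(G,A)=0$; so one may assume $n\geq 1$ and $G$ nontrivial. As in Lemma~\ref{lem:Fill} I would then reduce by functoriality to a congruence modulo $p$ over $R=\mbf{Z}[\underline{X},\underline{Y}]$: define $\mbf{x}\in\W_G(R)$ by $x_0=0$ and $x_T=X_T$ for $T\neq 0$, define $\mbf{y}\in\W_G(R)$ by $y_U=0$ for $\#U<p^n$ and $y_U=Y_U$ otherwise, put $\mbf{z}=\mbf{x}\mbf{y}$, and aim to show that $z_T\equiv 0\bmod pR$ whenever $\#T<p^{n+1}$. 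Specializing along the ring map $X_T\mapsto a_T$, $Y_U\mapsto b_U$ and using $pA=0$ then yields $c_V=0$ for all $\#V<p^{n+1}$, that is, $\mbf{c}\in I_{p^{n+1}}(G,A)$.

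Next I would prove the congruence by induction on $\#T$, which runs over powers of $p$ at most $p^n$. The base case is immediate, since $z_0=x_0y_0=0$. For $\#T=p^r$ with $r\geq 1$, I would solve for $z_T$ using the same recursion (\ref{ztf}) as in the proof of Lemma~\ref{lem:Fill}. The ``error'' term $\sum_{U<T}\frac{\varphi_T(U)}{\varphi_T(T)}z_U^{\#T/\#U}$ lies in $pR$ by the inductive hypothesis together with Lemma~\ref{lem:cong} (applied with $s=p$), exactly as there. In the main term $\sum_{T_1,T_2\leq T}\frac{\varphi_T(T_1)\varphi_T(T_2)}{\varphi_T(T)}x_{T_1}^{\#T/\#T_1}y_{T_2}^{\#T/\#T_2}$, a summand is identically $0$ unless $T_1$ is nontrivial (otherwise $x_{T_1}=0$) and $\#T_2\geq p^n$ (otherwise $y_{T_2}=0$), so it remains only to treat the pairs $(T_1,T_2)$ satisfying both conditions.

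This surviving case is the crux, and the one place where $m=1$ does real work. Since $T_2\leq T$ and $\#T\leq p^n$, Lemma~\ref{lem:fiber} gives $\#T_2\mid\#T\leq p^n$, so $\#T_2\leq p^n$; together with $\#T_2\geq p^n$ this forces $\#T_2=\#T=p^n$. Hence any $G$-map $T\to T_2$ has fibers of size $\#T/\#T_2=1$ and is therefore an isomorphism, so $\varphi_T(T_2)=\varphi_T(T)$ and the coefficient $\varphi_T(T_1)\varphi_T(T_2)/\varphi_T(T)$ collapses to $\varphi_T(T_1)$. Since $T_1$ is nontrivial and $T_1\leq T$, Lemma~\ref{lem:propdivis} gives $\varphi_T(T_1)\equiv 0\bmod p$, so the whole summand lies in $pR$. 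Thus $z_T\equiv 0\bmod pR$, completing the induction.

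I expect this last case analysis to be the main obstacle — really the only subtle step — because it is precisely the substitute for the ratio property: once $m=1$, the constraint $\#T_2\geq p^n$ combined with $T_2\leq T$ and $\#T\leq p^n$ pins down $T_2\cong T$, which is exactly what makes the dangerous coefficient reduce to $\varphi_T(T_1)$. For $m\geq 2$ this rigidity disappears, the coefficient genuinely depends on the arithmetic of the numbers $\varphi_T(\cdot)$, and one has no choice but to assume the ratio property as in Lemma~\ref{lem:Fill}; so this lemma is best viewed as the unconditional $m=1$ case of Theorem~\ref{thm:Fill}.
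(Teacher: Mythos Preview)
Your proposal is correct and follows essentially the same approach as the paper's proof: both reduce via functoriality to a mod $p$ congruence over $R=\ZZ[\underline{X},\underline{Y}]$, run the same induction on $\#T$ using the recursion for $z_T$, handle the tail sum via Lemma~\ref{lem:cong}, and in the main sum use $x_0=0$ and $y_{T_2}=0$ for $\#T_2<p^n$ to force $T_2=T$, whence the surviving coefficient is $\varphi_T(T_1)\equiv 0\bmod p$ by Lemma~\ref{lem:propdivis}. Your extra justification that $T_2\cong T$ via the fiber-size argument of Lemma~\ref{lem:fiber} is a welcome spelling-out of the paper's terse ``$T_2=T$''.
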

\begin{proof}
If $n = 0$ the result is clear since $I_1(G,A) = \W_G(A)$, so without loss of generality assume $n \geq 1$. 

We will derive a mod $p$ congruence for particular Witt vectors over the ring $R = \mbf{Z}[\underline{X},\underline{Y}]$, which will be sufficient using functoriality. 
Define $\mbf{x}$ and $\mbf{y}$ in $\W_G(R)$ by
\begin{displaymath}
x_T = 
\begin{cases}
0, & \text{if $ T = 0$,} \\
X_T, & \textrm{if $T \neq 0$,}
\end{cases} \quad \text{ and } \quad 
y_T = 
\begin{cases}
0, & \text{if $\# T < p^n$,} \\
Y_T, & \textrm{otherwise.}
\end{cases}
\end{displaymath} 
Set $\mbf{z} = \mbf{xy}$. Our aim is to show 
\begin{equation}
\# T < p^{n+1} \Longrightarrow z_T \equiv 0 \bmod pR. 
\end{equation}
We argue by induction on $\#T$. Clearly $z_0 = x_0y_0 = 0$. 
Let $\# T = p^r < p^{n+1}$ with $r \geq 1$ and 
assume by induction that for all $\# U < p^r$, $z_U \equiv 0 \bmod pR$. 
Since the $T$-th Witt polynomial is a multiplicative function  
$W_T \colon \W_G(R) \rightarrow R$, 
$W_T(\mbf{z}) = W_T(\mbf{x})W_T(\mbf{y})$: 
$$\sum_{U \leq T} \varphi_T(U) z_U^{\# T / \#U } = \sum_{T_1,T_2 \leq T} \varphi_T(T_1)\varphi_T(T_2) x_{T_1}^{\#T / \# T_1} y_{T_2}^{\# T / \# T_2}.$$ Solving this equation for $z_T$ in $\QQ[\underline{X},\underline{Y}]$, 
\begin{equation}\label{weakztf}
z_T = \sum_{T_1,T_2 \leq T} \frac{\varphi_T(T_1)\varphi_T(T_2)}{\varphi_T(T)} x_{T_1}^{\#T / \# T_1} y_{T_2}^{\# T / \# T_2} - \sum\limits_{U < T} \frac{\varphi_T(U)}{\varphi_T(T)} z_U^{\#T / \#U}.
\end{equation}

Since $z_U \in pR$ for $U <T$, 
the second term in (\ref{weakztf}) is $0$ mod $pR$ by Lemma \ref{lem:cong}.
In the first term in (\ref{weakztf}), 
we can assume $T_1 \neq 0$ since $x_0 = 0$. 
If $\# T_2 < p^n$ then $y_{T_2} = 0$, 
so we only need to consider $T_2$ where $\# T_2 \geq p^n$. 
Since $\# T < p^{n+1}$, $T_2 = T$. 
Then (\ref{weakztf}) becomes $$ z_T = \sum_{0 < T_1 \leq T} \varphi_T(T_1) x_{T_1}^{\#T / \# T_1} y_{T} - \sum\limits_{U < T} \frac{\varphi_T(U)}{\varphi_T(T)} z_U^{\#T / \#U}.$$ 
Since $\varphi_T(T_1)$ is an integral multiple of $p$ by Lemma \ref{lem:propdivis}, $z_T \equiv 0 \bmod pR$.     
\end{proof} An immediate useful corollary follows. 

\begin{cor}
\label{cor:IFillpower}
For any pro-$p$ group $G$ and ring $A$ of characteristic $p$, 

\noindent $I_p(G,A)^m \subset I_{p^m}(G,A)$. 

\end{cor}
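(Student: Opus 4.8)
The plan is to induct on $m$, with Lemma \ref{lem:weakIFill} supplying the inductive step. The base case is immediate: for $m = 0$ one has $I_p(G,A)^0 = \W_G(A) = I_1(G,A) = I_{p^0}(G,A)$, and for $m = 1$ the containment $I_p(G,A)^1 = I_{p^1}(G,A)$ holds by definition. So I would begin the induction at $m = 1$ (or $m = 0$) with nothing to check.

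For the inductive step, suppose $I_p(G,A)^m \subset I_{p^m}(G,A)$ for some $m \geq 1$. Using the standard identity $\mathfrak{a}^{m+1} = \mathfrak{a}\cdot\mathfrak{a}^m$ for powers of an ideal in a commutative ring, together with the fact that forming products of ideals preserves inclusions, I would write
$$
I_p(G,A)^{m+1} = I_p(G,A)\cdot I_p(G,A)^m \subset I_p(G,A)\cdot I_{p^m}(G,A) \subset I_{p^{m+1}}(G,A),
$$
where the final containment is exactly Lemma \ref{lem:weakIFill} with $n = m$. This closes the induction and proves $I_p(G,A)^m \subset I_{p^m}(G,A)$ for all $m$.

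I do not expect any genuine obstacle: all the real work sits in Lemma \ref{lem:weakIFill}. The only minor point worth a word is that Lemma \ref{lem:weakIFill} is literally stated for the product ideal $I_p(G,A)I_{p^n}(G,A)$, while its proof verifies only that $\mbf{a}\mbf{b} \in I_{p^{n+1}}(G,A)$ for individual $\mbf{a} \in I_p(G,A)$ and $\mbf{b} \in I_{p^n}(G,A)$; since $I_{p^{n+1}}(G,A)$ is an additive subgroup of $\W_G(A)$, the ideal generated by all such products is still contained in it, so the product-ideal statement used above is justified.
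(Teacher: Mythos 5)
Your proof is correct and is essentially the paper's argument: the paper simply says the corollary follows by repeated application of Lemma \ref{lem:weakIFill}, which is exactly your induction. The extra remark about products of ideals versus products of elements is a fine (and accurate) point of care, but nothing more is needed.
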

\begin{proof}
This follows directly from repeated applications of Lemma \ref{lem:weakIFill}. 
\end{proof}
We are now set to prove the main result of this section. 

\begin{thm}
\label{units}
Let $G$ be a pro-$p$ group and $A$ be a ring of 
characteristic $p$. The units in $\W_G(A)$ are $\W_G(A)^\times = \{ \mbf{a} : a_0 \in A^\times \}$. Consequently, when $(A,\goth{n})$ is local, $\W_G(A)$ is a local ring with maximal ideal $\goth{m} = \{ \mbf{a} \in \mbf{W}_G(A) : a_0 \in \goth{n} \}$. 
\end{thm}
\begin{proof}
Obviously $\W_G(A)^\times \subset \{ \mbf{a} : a_0 \in A^\times \}$. To prove the reverse 
inclusion, let $\mbf{a} \in \W_G(A)$ have $a_0 \in A^\times$.  
By Theorem \ref{multx} the Witt vector $(a_0,0,0,\dots)$ is a unit, with inverse $(a_0^{-1},0,0,\dots)$,
so it suffices to show $(a_0^{-1},0,0,\dots)\mbf{a}$ is a unit.  The first coordinate 
of this product is 1, so we are reduced to showing a Witt vector with first coordinate 1 is 
a unit.   That is, we can assume $a_0 = 1$.  By Theorem \ref{thmRS}, 
$$
\mbf{a} = (1,0,0,\dots) + (0,\{a_T\}_{T \not= 0}) \in 1 + I_p(G,A). 
$$
Since $I_p(G,A)^m \subset I_{p^m}(G,A)$ by Corollary \ref{cor:IFillpower}, 
we can invert ${\mbf a}$ 
using a geometric series
since $\W_G(A)$ is complete in the profinite topology. 
\end{proof}

\section{The Frame of $\ZZ_p^d$}\label{secd2}

The subgroup structure of $\ZZ_p^d$ is homogeneous in the sense that every open subgroup is isomorphic to $\ZZ_p^d$ (although there is not a canonical choice of isomorphism, unlike the case when $d = 1$).  
A subgroup is open if and only if it has finite index. 
If $H$ is an open subgroup 
there is a $\ZZ_p$-basis $\{e_1,e_2,\ldots,e_d\}$ for $G$ such that $G = \ZZ_p e_1 \oplus \ZZ_p e_2 \oplus \cdots \oplus \ZZ_p e_d$ and $H = \ZZ_p p^{a_1} e_1 \oplus \ZZ_p p^{a_2} e_2 \oplus \cdots \oplus \ZZ_p p^{a_d} e_d$ for some $a_1,\ldots,a_d\geq 0$. 
The $G$-set $G/H$ has the form 
\begin{equation}\label{GHa}
G/H  \cong \ZZ_p/p^{a_1}\ZZ_p \times \ZZ_p/p^{a_2}\ZZ_p \times \cdots \times 
\ZZ_p/p^{a_d}\ZZ_p.
\end{equation}

As a group, $G/H$ is usually a product of $d$ nontrivial cyclic $p$-groups.  
For some $H$, $a_i = 0$ for all but one $i$, making $T = G/H$ a cyclic group. 

\begin{dff}
\label{dff:cyclic}
Let $G$ be any profinite group and $N$ be an open normal subgroup. A $G$-set $T \cong G/N$ where $G/N$ is a cyclic group is called a {\it{cyclic}} $G$-set. 
\end{dff} 

An important property about cyclic $G$-sets $T$ when $G$ is a pro-$p$ group is that $\{ U \in \mcal{F}(G) \colon U < T\}$, which is called the {\it{strict downset of $T$}}, with its induced order is a chain. 

\

{\it Throughout the rest of this section $G = \ZZ_p^d$ with $d \geq 2$}.

\

From the large number of $\ZZ_p$-bases of $G$ it is reasonable to expect that there are many cyclic $G$-sets of each size as the size grows. We will use cyclic $G$-sets later (Lemma \ref{lem:nicyclic}) to find nonisomorphic $G$-sets of the same size with the same strict downsets, 
i.e., the $G$-sets lying below them in the frame of $G$ are the same, which will be important in Section \ref{sec:Zpdnotnoth}.

For each $T \in \mcal{F}(G)$, there is an open subgroup $H \subset G$ with $T \cong G/H$, and $H$ is uniquely determined by $T$ since $G$ is abelian: $H$ is the common stabilizer of all points in $T$.  
Using a $\ZZ_p$-basis $\{e_1,e_2,\ldots,e_d\}$ of $G$ we can
identify $H$ with $p^{a_1}\ZZ_p \times p^{a_2} \ZZ_p \times \cdots \times p^{a_d} \ZZ_p$ (this amounts to applying an automorphism of $G$), so the $G$-sets below $G/H$ in $\mcal{F}(G)$ are in one-to-one correspondence with the (open) subgroups of $G$ that contain 
$p^{a_1}\ZZ_p \times p^{a_2} \ZZ_p \times \cdots \times p^{a_d} \ZZ_p$.  

The frame of $\mcal{F}(\ZZ_p)$ is linearly ordered so every element has a unique cover ($G$-set lying directly above it). Since every subgroup of $\ZZ_p^d$ is isomorphic to $\ZZ_p^d$ and there are $p^{d-1} + \ldots +p +1$ maximal subgroups of $\ZZ_p^d$ the number of covers of each element of $\mcal{F}(G)$ is the same. 

\begin{thm}
\label{thm:coversinZpd}
Let $T \in \mcal{F}(\ZZ_p^d)$. It has  $p^{d-1} + \ldots + p +1$ covers in $\mcal{F}(\ZZ_p^d)$ and in the case $T \geq \ZZ_p^d/p\ZZ_p^d$, it covers $p^{d-1} + \ldots + p + 1$ elements of $\mcal{F}(\ZZ_p^d)$.
\end{thm}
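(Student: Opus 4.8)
The plan is to reduce everything to counting open subgroups of $G = \ZZ_p^d$ sitting in a prescribed position relative to a fixed open subgroup $H$, using the dictionary between $G$-sets below $G/H$ and overgroups of $H$ established in Section \ref{secd2}. Since $G$ is abelian, each $T \in \mcal{F}(G)$ has a unique stabilizer $H$ with $T \cong G/H$, and $U \leq T$ corresponds to an open subgroup $H'$ with $H \subset H'$; covers of $T$ correspond to overgroups $H'$ with $[H':H] = p$, and elements covered by $T$ correspond to subgroups $H'' \subset H$ with $[H:H''] = p$.

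For the first claim (number of covers), I would argue as follows. Overgroups $H' \supset H$ with $[H':H] = p$ are in bijection with index-$p$ subgroups of the quotient $G/H$ — no, more precisely, with subgroups of order $p$ in the group... wait, one must be careful: $H'/H$ is a subgroup of $G/H$ of order $p$. But not every order-$p$ subgroup of $G/H$ arises from an $H'$ that is \emph{open} — actually every subgroup of $G/H$ does arise, and openness of $H'$ is automatic since $[G:H'] \le [G:H] < \infty$. So covers of $T$ correspond to subgroups of order $p$ in $G/H$. However $G/H$ need not be elementary abelian, so counting order-$p$ subgroups directly is awkward. The cleaner route: overgroups $H' \supsetneq H$ with $[H':H]=p$ are exactly the preimages in $G$ of index-$p$ subgroups of $G/pH$ — hmm, this is getting complicated. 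The right statement is: such $H'$ correspond bijectively to the hyperplanes (index-$p$ subgroups) of the $\FF_p$-vector space $G/(H + pG)$... no. Let me instead use the correct clean fact: $H' \supset H$ with $[H':H] = p$ corresponds to a subgroup of $G/H$ of index equal to $[G:H]/p$, equivalently a subgroup containing the image of $pG$... The genuinely clean approach is this: since $H$ is open, $G/H$ is a finite abelian $p$-group, and subgroups $H' $ with $H \subset H' \subset G$, $[H':H] = p$, are in bijection with subgroups of order $p$ in $G/H$; by duality these are in bijection with index-$p$ subgroups of $G/H$, which are in bijection with index-$p$ subgroups of $(G/H)/p(G/H) = G/(pG + H)$, an $\FF_p$-vector space of some dimension $m$. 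The number of such is $(p^m - 1)/(p-1) = p^{m-1} + \cdots + p + 1$. So the content of the first claim is that $m = d$ always, i.e., that $pG + H = pG$, i.e., $H \subset pG$ — which is \emph{false} in general. So I must be more careful: the correct invariant is the minimal number of generators of $G/H$, and the claim is really that this is always $d$. But $G/H$ can be cyclic! So the claim as I've set it up is wrong, which means the intended counting is different.

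Rethinking: the hypothesis $T \ge \ZZ_p^d/p\ZZ_p^d$ in the second half is the key — I would guess the first half also implicitly fits this, or the count $p^{d-1}+\cdots+1$ is the count of \emph{maximal} subgroups of $G$ which is constant because every open subgroup of $\ZZ_p^d$ is abstractly isomorphic to $\ZZ_p^d$ (stated just before the theorem). So the actual argument for covers of $T \cong G/H$: covers correspond to $H'$ with $H \subsetneq H'$, $[H':H]=p$, equivalently to \emph{maximal} subgroups of $H$ containing... no — to index-$p$ \emph{sub}groups of $H$? No: $H \subset H'$, so we want maximal (index-$p$) subgroups $H'' := H$ of $H'$, ranging over $H'$; equivalently, given $H$, the number of ways to enlarge. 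The slick move: there is a bijection between covers of $G/H$ and maximal subgroups of $H$ itself, via $H' \leftrightarrow$ (a maximal subgroup of $H'$)... I think the intended argument is simply: covers of $G/H$ are $G/H'$ with $H'$ ranging over index-$p$ subgroups of $G$ containing $H$, and by the correspondence these are in bijection with index-$p$ subgroups of $G/H$; then using that $G/H \cong \prod \ZZ/p^{a_i}$ and the structure from \eqref{GHa}, count index-$p$ subgroups, which equals $(p^d-1)/(p-1)$ whenever $G/H$ is \emph{not} cyclic... and when it is cyclic there is exactly one such subgroup, contradicting the claim. So the first sentence of the theorem must be understood with the standing hypothesis implicit, or I am miscounting.

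Given the confusion, here is the plan I would actually write, trusting the theorem statement and the sentence preceding it: First, reduce to subgroup counting via the abelian correspondence: covers of $T \cong G/H$ biject with index-$p$ open subgroups $H'$ of $G$ with $H \subset H'$, and these biject with index-$p$ subgroups of $G/H$. Second, observe $G/H \cong \bigoplus_{i=1}^d \ZZ/p^{a_i}\ZZ$ by \eqref{GHa}; an index-$p$ subgroup is the kernel of a surjection $G/H \onto \ZZ/p\ZZ$, and such surjections mod scalars biject with $\PP^{d-1}(\FF_p)$ \emph{provided each $a_i \ge 1$}, giving the count $(p^d-1)/(p-1) = p^{d-1}+\cdots+p+1$; for the general case one checks directly from the structure theorem that the count is still this, using that reducing mod $p$ kills the $\ZZ/p^{a_i}$ with $a_i \ge 1$ down to $\ZZ/p\ZZ$ and kills those with $a_i = 0$ entirely, so $\Hom(G/H,\ZZ/p\ZZ) \cong \FF_p^{d'}$ where $d'$ is the number of indices with $a_i \ge 1$ — and here I would invoke the preceding remark that forces the relevant count to be the full $d$, i.e., appeal to the fact that the number of maximal subgroups of $\ZZ_p^d$ is $p^{d-1}+\cdots+1$ and that covers of $G/H$ are in index-preserving bijection with maximal subgroups of an open subgroup $\cong \ZZ_p^d$. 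Third, for the second statement (the number of $G$-sets \emph{covered by} $T$ when $T \ge \ZZ_p^d/p\ZZ_p^d$): this condition means exactly $H \subset p\ZZ_p^d$ (since $T \ge \ZZ_p^d/p\ZZ_p^d$ iff $p\ZZ_p^d$ is a subgroup of a conjugate of $H$, and $G$ abelian makes this $p\ZZ_p^d \supset$... wait, the partial order is $G/K \le G/H$ iff $H \subset K$ up to conjugacy, so $\ZZ_p^d/p\ZZ_p^d \le G/H$ iff $H \subset p\ZZ_p^d$), hence all $a_i \ge 1$; the $G$-sets covered by $T$ biject with index-$p$ subgroups $H''$ of $H$, equivalently (since $H \cong \ZZ_p^d$) with maximal subgroups of $\ZZ_p^d$, of which there are $p^{d-1}+\cdots+p+1$. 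The main obstacle is getting the first-half count right in the cyclic case: I expect the resolution is that the theorem intends the count of covers to be read together with the fact (stated before the theorem) that every open subgroup is $\cong \ZZ_p^d$, so covers of any $T$ correspond bijectively and index-preservingly to maximal subgroups of that $\ZZ_p^d$-copy, making the count uniform; I would spell out that bijection ($H' \mapsto$ a fixed-index-$p$ subgroup, or rather the correspondence $H' \supset H \leftrightarrow $ subgroups of $G/H$) carefully as the one delicate point, and let the "covered by" half follow symmetrically and more transparently.
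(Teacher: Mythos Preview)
Your dictionary between covers and subgroups has the direction reversed, and that single error is what generates all the confusion in the proposal. With the partial order as stated in the paper ($G/K \leq G/H \Leftrightarrow H \subset K$), a \emph{cover} $T' = G/H'$ of $T = G/H$ satisfies $T' > T$, hence $H' \subsetneq H$ with $[H:H'] = p$. So covers of $T$ correspond to index-$p$ subgroups of $H$, not to overgroups. Dually, the $G$-sets that $T$ covers correspond to overgroups $K \supset H$ with $[K:H] = p$, i.e.\ to order-$p$ subgroups of $G/H$.

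Once you swap the two correspondences, your two computations become correct and match the paper's proof exactly. Covers of $T$: since $H \cong \ZZ_p^d$ (the fact you kept trying to invoke), index-$p$ subgroups of $H$ are hyperplanes in $H/pH \cong (\ZZ/p\ZZ)^d$, and there are $(p^d-1)/(p-1)$ of them, with no hypothesis needed on $T$. Elements covered by $T$: these are order-$p$ subgroups of $G/H$, equivalently (by duality for finite abelian groups, as the paper notes) index-$p$ subgroups of $G/H$; these factor through $(G/H)/p(G/H)$, and the hypothesis $T \geq G/pG$, i.e.\ $H \subset pG$, is exactly what makes this quotient $(\ZZ/p\ZZ)^d$ rather than something smaller. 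Your struggle with the cyclic case (``when $G/H$ is cyclic there is exactly one such subgroup'') was a correct observation about the wrong object: it shows that the elements-covered-by-$T$ count genuinely requires the hypothesis, not that the covers-of-$T$ count does.
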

\begin{proof}
Let $H$ be the stabilizer of $T$. Covers of $T$ correspond to subgroups of $H$ with index $p$, which correspond to maximal subgroups of $H/pH \cong (\ZZ/p\ZZ)^d.$ By duality the number of such subgroups is the number of subgroups of $H/pH$ with size $p$, namely the number of linear subspaces of $(\ZZ/p\ZZ)^d$. That is $(p^d - 1)/(p-1) = 1 + p + \ldots + p^{d-1}$. 

Similarly, if $T$ covers $U$ then $U = \ZZ_p^d/K$ where $H \subset K$ and $[K : H] = p$, so the sets which $T$ covers correspond to subgroups of size $p$ in $\ZZ_p^d/H$. The number of such subgroups is the same as the number of subgroups of index $p$. When $\ZZ_p^d/H \geq \ZZ_p^d/p\ZZ_p^d$, we have $H \subset p \ZZ_p^d$. All subgroups of $\ZZ_p^d$ with index $p$ contain $p\ZZ_p^d$, so the number of subgroups of index $p$ in $\ZZ_p^d/H$ and $\ZZ_p^d/p\ZZ_p^d$ is the same. 

\end{proof}

In some calculations, it was noticed that many pairs of coordinates in sums or products of certain Witt vectors are equal. This is described formally by Lemmas \ref{lem:nicyclicsum} and \ref{lem:nicyclicprod} below and motivates the following definition.

\begin{dff}
\label{dff:linked}
A nonisomorphic pair of $G$-sets $T$ and $T'$, whose strict downsets agree, is called {\it{linked}}.
\end{dff}

For a linked pair of $G$-sets $T$ and $T'$,
$\# T = \#T'$ since for any $U < T$ of maximal size, $\#T = p\#U$. 
Therefore $\#T' = p\#U = \#T$. 
An example of such a pair $T$ and $T'$ is labeled in Figure \ref{fig:framelinked}. The pair $V_1$ and $V_2$ in Figure \ref{fig:framelinked} is not linked since the $G$-set $U$ is below $V_2$ and not below $V_1$. 

\begin{figure}[htb]
\begin{center}
\scalebox{0.8}{
\begin{tikzpicture}[smooth]


\fill[black] (0,0) circle (0.1cm);

\fill[black] (1,0) circle (0.1cm);
\fill[black] (1,-1) circle (0.1cm);
\fill[black] (1,1) circle (0.1cm);

\fill[black] (3,1.25) circle (0.1cm);
\fill[black] (3,0.75) circle (0.1cm);

\fill[black] (3,0.25) circle (0.1cm);
\fill[black] (3,-0.25) circle (0.1cm);

\fill[black] (3,-0.75) circle (0.1cm);
\fill[black] (3,-1.25) circle (0.1cm);

\fill[black] (3,2.5) circle (0.1cm);

\fill[black] (5,3) circle (0.1cm);
\fill[black] (5,2.5) circle (0.1cm);
\fill[black] (5,2) circle (0.1cm);

\fill[black] (5,-1.2) circle (0.1cm);
\fill[black] (5,-1.45) circle (0.1cm);

\fill[black] (5,-0.65) circle (0.1cm);
\fill[black] (5,-0.9) circle (0.1cm);

\fill[black] (5,-0.15) circle (0.1cm);
\fill[black] (5,-0.4) circle (0.1cm);

\fill[black] (5,0.35) circle (0.1cm);
\fill[black] (5,0.1) circle (0.1cm);

\fill[black] (5,0.85) circle (0.1cm);
\fill[black] (5,0.6) circle (0.1cm);

\fill[black] (5,1.35) circle (0.1cm);
\fill[black] (5,1.1) circle (0.1cm);

\fill[black] (7,0) circle (0.05cm);
\fill[black] (7.2,0) circle (0.05cm);
\fill[black] (7.4,0) circle (0.05cm);

\fill[black] (5.5,2.5) circle (0.05cm);
\fill[black] (5.7,2.5) circle (0.05cm);
\fill[black] (5.9,2.5) circle (0.05cm);

\fill[black] (4,3.25) circle (0.05cm);
\fill[black] (4.2,3.45) circle (0.05cm);
\fill[black] (4.4,3.65) circle (0.05cm);

\draw[-] (0,0) -- (1,0);
\draw[-] (0,0) -- (1,-1);
\draw[-] (0,0) -- (1,1);

\draw[-] (1,1) -- (3,1.25);
\draw[-] (1,1) -- (3,0.75);

\draw[-] (1,0) -- (3,0.25);
\draw[-] (1,0) -- (3,-0.25);

\draw[-] (1,-1) -- (3,-0.75);
\draw[-] (1,-1) -- (3,-1.25);

\draw[-] (-1,1.5) -- (6,1.5);

\draw[-] (1,1) -- (3,2.5);
\draw[-] (1,0) -- (3,2.5);
\draw[-] (1,-1) -- (3,2.5);

\draw[-] (3,2.5) -- (5,3);
\draw[-] (3,2.5) -- (5,2.5);
\draw[-] (3,2.5) -- (5,2);

\draw[-] (3,1.25) -- (5,1.35);
\draw[-] (3,1.25) -- (5,1.1);

\draw[-] (3,0.75) -- (5,0.85);
\draw[-] (3,0.75) -- (5,0.6);

\draw[-] (3,0.25) -- (5,0.35);
\draw[-] (3,0.25) -- (5,0.1);

\draw[-] (3,-0.25) -- (5,-0.15);
\draw[-] (3,-0.25) -- (5,-0.4);

\draw[-] (3,-0.75) -- (5,-0.65);
\draw[-] (3,-0.75) -- (5,-0.9);

\draw[-] (3,-1.25) -- (5,-1.2);
\draw[-] (3,-1.25) -- (5,-1.45);

\draw[-] (3,1.25) -- (5,3);
\draw[-] (3,0.75) -- (5,3);

\draw[-] (3,0.25) -- (5,2.5);
\draw[-] (3,-0.25) -- (5,2.5);

\draw[-] (3,-0.75) -- (5,2);
\draw[-] (3,-1.25) -- (5,2);

\draw (5,3) circle (0.15cm);
\draw (5,2.5) circle (0.15cm);

\draw (5,-1.2) circle (0.15cm);
\draw (5,-1.45) circle (0.15cm);

\draw (3,-0.25) circle (0.15cm);

\node at (2.6, -0.35) {$U$} {};

\node at (5.4,-1.15) {$T_1$} {};
\node at (5.4,-1.5) {$T_2$} {};

\node at (5.4,3) {$V_1$} {};
\node at (5.4,2.2) {$V_2$} {};

\end{tikzpicture}
}
\end{center}
\caption{Linked and non-linked $\ZZ_2^2$-sets.}
\label{fig:framelinked}
\end{figure}


Our first task is to show that linked pairs of $G$-sets of arbitrarily large size exist for $G = \ZZ_p^d$ for $d \geq 2$. Of course, for $d = 1$, there are no linked $\ZZ_p$-sets. 

\begin{lem}
\label{lem:nicyclic}
For each $n \geq 1$ and cyclic $G$-set $T$ of size $p^{n-1}$, there is a linked pair of cyclic $G$-sets $T_1$ and $T_2$ covering $T$. 
\end{lem}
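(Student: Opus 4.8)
The plan is to work with the concrete coordinate description of open subgroups of $G = \ZZ_p^d$. Since $T$ is cyclic of size $p^{n-1}$, its stabilizer is an open subgroup $H$ with $G/H$ cyclic, so after applying an automorphism of $G$ (change of $\ZZ_p$-basis) we may assume $H = p^{n-1}\ZZ_p \oplus p^{n-1}\ZZ_p \oplus p^{n-2}\ZZ_p\oplus \cdots$; more precisely, since $G/H$ is cyclic of order $p^{n-1}$ we can arrange $H = p^{n-1}\ZZ_p e_1 \oplus \ZZ_p e_2 \oplus \cdots \oplus \ZZ_p e_d$ for a suitable basis $\{e_1,\dots,e_d\}$. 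The covers of $T$ correspond (Theorem \ref{thm:coversinZpd}) to the index-$p$ subgroups $H'$ of $H$. I will exhibit two such subgroups $H_1, H_2$ with $G/H_i$ cyclic and with the property that the open subgroups of $G$ strictly containing $H_i$ — equivalently the $G$-sets strictly below $T_i = G/H_i$ — are the same for $i = 1, 2$, and then check that $T_1 \not\cong T_2$.

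The key construction: take $H_1 = p^{n}\ZZ_p e_1 \oplus \ZZ_p e_2 \oplus \cdots \oplus \ZZ_p e_d$ (so $G/H_1$ is cyclic of order $p^n$) and $H_2 = p^{n-1}\ZZ_p(e_1) \oplus p\ZZ_p e_2 \oplus \ZZ_p e_3 \oplus \cdots$ but with the first two coordinates ``twisted'': more carefully, let $H_2$ be the index-$p$ subgroup of $H$ generated by $pH$ together with $p^{n-1}e_1 + p^{\,?}e_2$-type elements, chosen so that $G/H_2$ is still cyclic. The point is that both $H_1$ and $H_2$ are index-$p$ in $H$, hence both contained in exactly the same collection of open subgroups of $G$ that contain $H$, and I must verify no \emph{new} intermediate subgroup appears: an open subgroup $K$ with $H_i \subsetneq K$ and $K \not\supseteq H$ would have $K \cap H$ of index $p$ in $K$, and since $[H:H_i] = p$, such $K$ exists iff $K \cap H = H_i$, i.e. $K = \langle H_i, v\rangle$ for some $v \notin H$. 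I need to choose $H_1, H_2$ so that the lattice of such $K$'s coincides. Since $T$ is cyclic, its strict downset is a chain (noted after Definition \ref{dff:cyclic}), and the strict downset of each $T_i$ consists of $T$ together with this chain provided no extra $K$ intrudes; arranging $H_1, H_2 \subset \Phi(G) = p\ZZ_p^d$ — which holds automatically when $n \geq 2$, and for $n = 1$ forces a small separate check — makes every index-$p$ overgroup of $H_i$ contain $p\ZZ_p^d$, pinning down the downsets.

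The cleanest route, which I expect to pursue, is a counting argument rather than an explicit subgroup construction: by Theorem \ref{thm:coversinZpd} the $G$-set $T$ has $1 + p + \cdots + p^{d-1} \geq p+1$ covers. Among these covers, I claim at least two are cyclic with the same strict downset as each other. The strict downset of any cover $T'$ of $T$ equals $\{T\} \cup \{U : U < T\}$ unless some cover of $U < T$ sneaks in below $T'$; but a $G$-set below $T'$ and not below $T$ would have to be a cover-companion forcing $G/\mathrm{Stab}(T')$ noncyclic structure incompatible with the chain below $T$. So I first show: every cover $T'$ of a cyclic $G$-set $T$ has strict downset exactly $\{T\} \cup (\text{strict downset of } T)$. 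Then, since $\mathrm{Aut}(G)$ acts on the set of covers of $T$ fixing the common downset, and there are $\geq p+1 \geq 3$ covers, I isolate the cyclic ones: a cover $G/H'$ is cyclic iff $H' \supset$ (the subgroup generated by $pH$ and a single element), and the number of cyclic covers is computed to be $\geq 2$ via the basis presentation above. The \textbf{main obstacle} is precisely this last count — verifying that among the $1+p+\cdots+p^{d-1}$ index-$p$ subgroups of $H$, at least two yield a \emph{cyclic} quotient $G/H'$; this reduces to a concrete linear-algebra count over $\FF_p$ of which hyperplanes in $H/pH$ lift to give $G/H'$ cyclic, using that $G/H$ is already cyclic so $H \not\subset p^2$-type conditions force the answer to be exactly $p$ (or more when $d > 2$), which is $\geq 2$. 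Once that count is in hand, pick any two such cyclic covers $T_1, T_2$; they are nonisomorphic (different stabilizers, $G$ abelian) with identical strict downsets, hence linked.
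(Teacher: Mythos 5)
Your overall strategy is the paper's: pick a basis adapted to $H$, produce two distinct index-$p$ subgroups $H_1,H_2$ of $H$ with $G/H_i$ cyclic, and conclude they are linked because cyclic $G$-sets have chain downsets and $G$ abelian makes distinct stabilizers give nonisomorphic $G$-sets. But the decisive step — actually producing the second cyclic cover — is not carried out in either of your two routes. In the explicit route you leave $H_2$ as ``$pH$ together with $p^{n-1}e_1 + p^{\,?}e_2$-type elements, chosen so that $G/H_2$ is still cyclic,'' which is exactly the thing that needs to be exhibited and verified; the paper writes it down concretely as $H_2 = \bigoplus_{i=1}^{d-2}\ZZ_p e_i \oplus \ZZ_p p e_{d-1} \oplus \ZZ_p(e_{d-1}+p^{n-1}e_d)$ (with the cyclic direction on $e_d$), and one checks $G/H_2 \cong \ZZ/p^n\ZZ$ by a Smith-normal-form computation. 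In the counting route you yourself flag the count of cyclic covers as the ``main obstacle'' and only assert the answer is $p$ without proof. So as written the proposal does not close.

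Two of your intermediate assertions are also false and should be excised. First, it is not true that \emph{every} cover $T'$ of a cyclic $T$ has strict downset $\{T\}\cup\{U<T\}$: for $G=\ZZ_p^2$, $T = G/(p\ZZ_p\times\ZZ_p)$ and $T' = G/pG$, the cover $T'$ lies above all $p+1$ transitive $G$-sets of size $p$, not just $T$. The statement is only valid for \emph{cyclic} covers $T'$, where it follows immediately from the downset being a chain containing $T$ — which is all you need, but you must restrict to cyclic covers before invoking it, not use it to detect them. Second, the claim that $H_1,H_2\subset p\ZZ_p^d$ ``holds automatically when $n\geq 2$'' is backwards: a subgroup $H'$ with $G/H'$ cyclic of order $\geq p^2$ can never be contained in $p\ZZ_p^d$ when $d\geq 2$, since that would force the noncyclic group $G/pG\cong(\ZZ/p\ZZ)^d$ to be a quotient of the cyclic group $G/H'$. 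Neither error is needed once you argue as the paper does: the subgroups of $G$ containing a cyclic $H_i$ form the chain $H_i\subset H=K_0\subset K_1\subset\cdots\subset K_{n-1}=G$, so both strict downsets equal $\{G/K_j : 0\leq j\leq n-1\}$.
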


\begin{proof}
Since $G$ is abelian, $G$-sets $G/H$ for different (open) subgroups $H$ are nonisomorphic. 
Since $d \geq 2$, there is more than one subgroup of index $p$. 
That settles the case $n = 1$. 

Now let $n \geq 2$ and write $T = G/H$. There is a unique chain of subgroups $$ H = K_0 \subset K_1 \subset \cdots \subset K_{n-1} = G $$
where $[K_i:H] = p^i$ for all $i$.

There is a $\ZZ_p$-basis of $\ZZ_p^d$, $\{e_1,\ldots,e_d\}$, such that $H = \bigoplus_{i=1}^{d-1} \ZZ_p e_i \oplus \ZZ_p p^{n-1} e_d$.
Consider the subgroups $H_1 = \bigoplus_{i=1}^{d-1} \ZZ_p e_i \oplus \ZZ_p p^{n}e_d$ and 
$H_2 = \bigoplus_{i=1}^{d-2} \ZZ_p e_i \oplus \ZZ_p pe_{d-1} \oplus \ZZ_p(e_{d-1}+ p^{n-1}e_{d})$. Clearly $H_1 \subset H$ and $H_2 \subset H$, and both $G/H_1$ and $G/H_2$ are cyclic. Since $\# G/H_1$ and $\# G/H_2$ are both $p^{n}$, $G/H_1$ and $G/H_2$ are covers of $G/H$.  

Set $T_1 = G/H_1$ and $T_2 = G/H_2$. Since $H_1 \not= H_2$, $T_1$ and $T_2$ are nonisomorphic. To show their strict downsets agree, we use the cyclic condition. The $G$-sets $U < T_1$ are $G/K_i$ for $i = 0,\dots,n-1$.  The same argument applies to $T_2$, so the same $G$-sets lie strictly below $T_1$ and $T_2$ in the frame of $G$.
\end{proof}

\section{$\W_{\ZZ_p^d}(k)$ is not Noetherian for $d \geq 2$}\label{sec:Zpdnotnoth}

When $k$ is a perfect field of characteristic $p$ and $d \geq 2$, 
one might expect the rings $\W_{\ZZ_p^d}(k)$ generalize 
the classical Witt vectors $\W_{\ZZ_p}(k)$ in the same way that 
$k[[X_1,\dots,X_d]]$ generalizes $k[[X_1]]$: 
the power series ring in $d$ variables over a field is a complete local Noetherian domain with dimension $d$. 

It essentially follows from \cite{DS} that $\W_{\ZZ_p^d}(k)$ is not a domain but is a local ring whether or not $k$ is perfect. Since it is also complete in its profinite topology 
it is plausible to guess, by analogy to $k[[X_1,\dots,X_d]]$, 
that the maximal ideal of $\W_{\ZZ_p^d}(k)$ 
is generated by the Witt vectors $\omega_T(1)$ for $\#T = p$, but 
we will see this is false in a very strong way: $\W_{\ZZ_p^d}(k)$ is not Noetherian, 
whether or not $k$ is perfect. This is because, as we will see, 
the square of the maximal ideal of $\W_{\ZZ_p^d}(k)$ is much smaller than intuition suggests. 


To prove $\W_{\ZZ_p^d}(k)$ is not Noetherian when $d \geq 2$, note that Lemma \ref{lem:nicyclic} guarantees lots of linked paris of $\ZZ_p^d$-sets and we next prove two lemmas that describe the behavior of sums and products on these linked pairs of coordinates.. The upshot, Theorem \ref{thm:equalc}, is that 
Witt vectors in the square of the maximal ideal of $\W_{\ZZ_p^d}(k)$ have built-in redundancies in linked coordinates which occur arbitrarily far out into the frame of $\ZZ_p^d$ when $d \geq 2$. 

We continue to use the convention that $G$ stands for $\ZZ_p^d$.

\begin{lem}
\label{lem:nicyclicsum}
Let $A$ be a ring. Given linked $G$-sets $T$ and $T'$ and a finite collection of Witt vectors $\mbf{a}_1,\mbf{a}_2,\ldots,\mbf{a}_m \in \W_G(A)$ such that $(\mbf{a}_i)_T = (\mbf{a}_i)_{T'}$ for $1 \leq i \leq m$, set $\mbf{s} = \sum\limits_{i=1}^m \mbf{a}_i$. Then $s_T = s_{T'}$. 
\end{lem}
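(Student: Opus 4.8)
The plan is to reduce, by functoriality, to a universal polynomial ring of characteristic $0$, where the statement becomes a one-line consequence of a closed formula for the difference of two ghost components indexed by a linked pair. Since $G = \ZZ_p^d$ is abelian, every finite transitive $G$-set is normal, so $\varphi_T(U) = \#U$ for all $U \leq T$; hence for \emph{any} ring $B$ and \emph{any} $\mbf{v} \in \W_G(B)$,
$$
W_T(\mbf{v}) \;=\; \#T\cdot v_T \;+\; \sum_{U < T} \#U \cdot v_U^{\,\#T/\#U},
$$
the term with exponent $1$ being exactly the one at $U = T$. The first step is to observe that when $T$ and $T'$ are linked the two tail sums over $\{U<T\}$ and $\{U<T'\}$ are term-for-term identical: the index sets coincide by the definition of ``linked'', and the exponents coincide because $\#T = \#T'$ (recorded after Definition~\ref{dff:linked}). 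Subtracting, one obtains the identity
$$
W_T(\mbf{v}) - W_{T'}(\mbf{v}) \;=\; \#T\,(v_T - v_{T'}),
$$
valid for every $\mbf{v} \in \W_G(B)$, with no hypothesis on $\mbf{v}$.

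Next I would move to a universal coefficient ring. Let $P$ be the polynomial ring over $\ZZ$ with variables $X^{(i)}_U$ for $1 \leq i \leq m$ and $U \in \mcal{F}(G)$ with $U \neq T'$, and set $X^{(i)}_{T'} := X^{(i)}_T$; let $\mbf{x}_i = (X^{(i)}_U)_{U \in \mcal{F}(G)} \in \W_G(P)$, so $(\mbf{x}_i)_T = (\mbf{x}_i)_{T'}$ by construction, and put $\mbf{s} = \sum_{i=1}^m \mbf{x}_i$. Applying the displayed identity to each $\mbf{x}_i$ gives $W_T(\mbf{x}_i) = W_{T'}(\mbf{x}_i)$; since each Witt polynomial is an additive map $\W_G(P) \to P$, summing over $i$ gives $W_T(\mbf{s}) = W_{T'}(\mbf{s})$. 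Applying the identity once more, now to $\mbf{s}$ itself, gives $\#T\,(s_T - s_{T'}) = 0$ in $P$, and since $P$ is a domain of characteristic $0$ the integer $\#T$ is a nonzerodivisor, whence $s_T = s_{T'}$ in $\W_G(P)$. Finally, for a general ring $A$ and $\mbf{a}_1,\dots,\mbf{a}_m \in \W_G(A)$ with $(\mbf{a}_i)_T = (\mbf{a}_i)_{T'}$, the ring homomorphism $f \colon P \to A$ with $f(X^{(i)}_U) = (\mbf{a}_i)_U$ (well defined precisely because these equalities are compatible with the identification $X^{(i)}_{T'} = X^{(i)}_T$) induces $\W_G(f)$ carrying $\mbf{x}_i$ to $\mbf{a}_i$ and hence $\mbf{s}$ to $\sum_i \mbf{a}_i$; reading off the $T$-th and $T'$-th coordinates turns $s_T = s_{T'}$ into the assertion of the lemma.

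There is no serious obstacle here; the only points needing care are the two places the definition of ``linked'' enters (equal strict downsets yields equal index sets in the tail sums, equal cardinality yields equal exponents and lets one cancel the single integer $\#T$) and the bookkeeping in the universal ring, where identifying $X^{(i)}_{T'}$ with $X^{(i)}_T$ is exactly what makes the functoriality specialization legitimate. An essentially equivalent route that avoids writing down the universal ring would be a direct induction on $\#T$ using the recursion for $s_T$ extracted from $W_T(\mbf{s}) = \sum_i W_T(\mbf{a}_i)$, carried out over a torsion-free coefficient ring so that division by $\#T$ is permitted.
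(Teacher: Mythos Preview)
Your proof is correct and follows essentially the same approach as the paper's (second) argument: derive the identity $W_T(\mbf{v}) - W_{T'}(\mbf{v}) = \#T\,(v_T - v_{T'})$ from the linked hypothesis, work over a characteristic-zero polynomial ring where the $T$ and $T'$ variables are identified, and descend by functoriality. The only cosmetic difference is that the paper first reduces to $m=2$ by induction, while you handle all $m$ summands at once using additivity of $W_T$; your version is slightly cleaner but the content is the same.
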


\begin{proof}
By induction it suffices to check the case $m=2$.  

Since $T$ and $T'$ are linked i.e., the $G$-sets strictly below $T$ and $T'$ in $\mcal{F}(G)$ are the same, $\# T = \# T'$ 
and the Witt polynomials $W_T(\underline{X})$ and $W_{T'}(\underline{X})$ are the same 
up to the roles of $X_T$, $Y_T$ and $X_{T'},Y_{T'}$ in them.  
Therefore the two sum polynomials $S_T$ and $S_{T'}$ are the same up to the roles of $X_T$, $Y_T$, $X_{T'}$, and $Y_{T'}$ in them 
(and we know how $X_T$, $Y_T$, $X_{T'}$ and $Y_{T'}$ appear in $S_T$ and $S_{T'}$ by Theorem \ref{thm:integralwitt}).  
So when Witt vectors $\mbf{a}_1$ and $\mbf{a}_2$ 
have the same $T$ and $T'$ coordinates, since $T$ and $T'$ are linked we have $S_T(\mbf{a}_1,\mbf{a}_2) = S_{T'}(\mbf{a}_1,\mbf{a}_2)$. Thus $(\mbf{a}_1 + \mbf{a}_2)_T = (\mbf{a}_1 + \mbf{a}_2)_{T'}$. \end{proof}

Unlike the previous lemma, the next one is specific to rings of characteristic $p$.

\begin{lem}
\label{lem:nicyclicprod}
Let $A$ be a ring of characteristic $p$. Given a pair of linked $G$-sets $T$ and $T'$ and elements $\mbf{a}$ and $\mbf{b}$ of $\W_G(A)$ such that
$a_0 = b_0 = 0$, the product $\mbf{m} = \mbf{ab}$ satisfies $m_T = m_{T'}$.
\end{lem}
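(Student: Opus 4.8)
The plan is to use the generic-Witt-vector technique of Lemma~\ref{lem:nicyclicsum}: reformulate the claim as a congruence modulo $p$ over the polynomial ring $R = \ZZ[\underline{X},\underline{Y}]$ and then pass to $A$ by functoriality, with the characteristic-$p$ hypothesis entering at that last step. Concretely, I would take $\mbf{x},\mbf{y}\in\W_G(R)$ to be the generic Witt vectors with $x_0 = y_0 = 0$ and $x_U = X_U$, $y_U = Y_U$ for $U\neq 0$, and set $\mbf{z} = \mbf{xy}$. Choosing a ring homomorphism $f\colon R\to A$ with $f(X_U) = a_U$ and $f(Y_U) = b_U$ for $U\neq 0$, the hypothesis $a_0 = b_0 = 0$ gives $\W_G(f)(\mbf{x}) = \mbf{a}$ and $\W_G(f)(\mbf{y}) = \mbf{b}$, hence $\mbf{m} = \W_G(f)(\mbf{z})$ and $m_T - m_{T'} = f(z_T - z_{T'})$; since $A$ has characteristic $p$ it therefore suffices to prove
\[
z_T \equiv z_{T'} \bmod pR.
\]

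For this I would assemble three observations. Write $\#T = \#T' = p^r$; these sizes are equal, and $r\geq 1$ because a linked $G$-set is nontrivial. Since $G = \ZZ_p^d$ is abelian, every $G$-set has normal stabilizers, so $\varphi_T(U) = \#U$ and $W_T(\underline{X}) = \sum_{U\leq T}\#U\,X_U^{\#T/\#U}$, and likewise for $T'$. (i) Because $T$ and $T'$ are linked, $\{U<T\} = \{U<T'\}$ and the exponents $\#T/\#U$ coincide, so in $W_T(\underline{X}) - W_{T'}(\underline{X})$ all but the linear terms cancel, leaving $p^r(X_T - X_{T'})$; evaluating at $\mbf{x}$, $\mbf{y}$, $\mbf{z}$ gives $W_T(\mbf{x}) - W_{T'}(\mbf{x}) = p^r(x_T - x_{T'})$, $W_T(\mbf{y}) - W_{T'}(\mbf{y}) = p^r(y_T - y_{T'})$, and $W_T(\mbf{z}) - W_{T'}(\mbf{z}) = p^r(z_T - z_{T'})$. (ii) Since $G$ is pro-$p$, $\#U$ is a positive power of $p$ for every nontrivial $U$, so the only term of $W_{T'}(\mbf{x})$ (respectively $W_{T'}(\mbf{y})$) whose coefficient is not divisible by $p$ is the one indexed by $U = 0$ — and that term vanishes because $x_0 = y_0 = 0$. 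Hence $W_{T'}(\mbf{x})$ and $W_{T'}(\mbf{y})$ lie in $pR$. (iii) Each ghost component is a ring homomorphism, so $W_T(\mbf{z}) = W_T(\mbf{x})W_T(\mbf{y})$ and $W_{T'}(\mbf{z}) = W_{T'}(\mbf{x})W_{T'}(\mbf{y})$.

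To finish, write $W_T(\mbf{x}) = W_{T'}(\mbf{x}) + p^r\alpha$ and $W_T(\mbf{y}) = W_{T'}(\mbf{y}) + p^r\beta$ with $\alpha = x_T - x_{T'}$, $\beta = y_T - y_{T'}$; expanding the product in (iii) and comparing with $W_{T'}(\mbf{z})$, observation (i) gives
\[
p^r(z_T - z_{T'}) = p^r\bigl(\beta\,W_{T'}(\mbf{x}) + \alpha\,W_{T'}(\mbf{y}) + p^r\alpha\beta\bigr).
\]
Since $R$ is torsion-free one cancels $p^r$ to obtain $z_T - z_{T'} = \beta W_{T'}(\mbf{x}) + \alpha W_{T'}(\mbf{y}) + p^r\alpha\beta$, which lies in $pR$ by (ii) together with $r\geq 1$; functoriality then yields $m_T = m_{T'}$. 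The step to watch is the $p$-adic bookkeeping in the last display: one must observe that the difference of the two ghost products lands in $p^{r+1}R$, which is exactly what the divisibility in (ii) supplies once the factor $p^r$ from (i) is pulled out — and it is precisely here, and at the closing functorial reduction, that the hypothesis that $A$ has characteristic $p$ is indispensable, since over $R$ the coordinates $z_T$ and $z_{T'}$ genuinely differ.
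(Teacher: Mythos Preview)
Your proof is correct and follows essentially the same route as the paper: subtract the ghost equations at $T$ and $T'$, use the linked hypothesis to cancel all the $U<T$ contributions, and then control the surviving top term modulo $p$. The only cosmetic difference is that you bake $x_0=y_0=0$ into the generic Witt vectors (so the target is directly $z_T\equiv z_{T'}\bmod pR$), whereas the paper keeps $x_0,y_0$ generic, proves the sharper congruence $z_T-z_{T'}\equiv (x_T-x_{T'})y_0^{p^n}+(y_T-y_{T'})x_0^{p^n}\bmod pR$, and only then specializes.
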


\begin{proof}
Let $R =\mbf{Z}[\underline{X},\underline{Y}]$ and define $\mbf{x}$ and $\mbf{y}$ in $\W_{G}(R)$ by $x_U = X_U$ and $y_U = Y_U$ for all $U$.
Set $\mbf{z} = \mbf{x}\mbf{y}$. We will show 
$$ z_T - z_{T'} \equiv (x_T - x_{T'})y_0^{p^n} + (y_T - y_{T'})x_0^{p^n} \bmod pR.$$
Since $A$ has characteristic $p$, it would then follow 
by functoriality that $\mbf{ab}$ has equal $T$ and $T'$ coordinates in $\W_{G}(A)$. 

Since the $T$-th Witt polynomial is a multiplicative function 
$W_T \colon \W_{G}(R) \rightarrow R$, 
$W_T(\mbf{z}) = W_T(\mbf{x})W_T(\mbf{y})$: 
$$
\sum\limits_{U \leq T} \# U z_U^{\#T/\#U} = \left(\sum_{U \leq T} \#U x_U^{\#T/\#U}\right)\left(\sum_{U \leq T} \#U y_U^{\#T/\#U}\right).
$$ 
Isolating the $z_T$ term, 
\begin{equation}
\label{eq:nicyclicprod1}
\# T z_{T} = \left(\sum_{U \leq T} \#U x_U^{\#T/\#U}\right)\left(\sum_{U \leq T} \#U y_U^{\#T/\#U}\right) - \sum\limits_{U < T} \# U z_U^{\#T/\#U}.
\end{equation}
Likewise looking at the $T'$ coordinate we have 
\begin{equation}
\label{eq:nicyclicprod2}
\# T' z_{T'} = \left(\sum_{U \leq T'} \#U x_U^{\# T'/\#U}\right)\left(\sum_{U \leq T'} \#U y_U^{\# T'/\#U}\right) - \sum\limits_{U < T'} \# U z_U^{\# T'/\#U}.
\end{equation}
Since $T$ and $T'$ are linked, $\# T = \# T'$ and $\{U < T\} = \{U < T'\}$ so the $z$-terms being subtracted on the right side of (\ref{eq:nicyclicprod1}) and (\ref{eq:nicyclicprod2}) are the same. 
Subtracting (\ref{eq:nicyclicprod2}) from (\ref{eq:nicyclicprod1}) and setting 
$\#T = \#T' = p^n$, we have
\begin{eqnarray*}
p^n z_{T} - p^n z_{T'} & = & \left(\sum_{U \leq T} \#U x_U^{\frac{p^n}{\#U}}\right)\left(\sum_{U \leq T} \#U y_U^{\frac{p^n}{\# U}}\right) - \sum\limits_{U < T} \# U z_U^{\frac{p^n}{\# U}} \\
 & & - \left(\sum_{U \leq T'} \#U x_U^{\frac{p^n}{\#U}}\right)\left(\sum_{U \leq T'} \#U y_U^{\frac{p^n}{\# U}}\right) + \sum\limits_{U < T'} \# U z_U^{\frac{p^n}{\# U}}\\
 & = & p^{2n}x_Ty_T +  p^nx_T \sum_{U < T} \#U y_U^{\frac{p^n}{\# U}} + p^n y_T \sum_{U < T} \#U x_U^{\frac{p^n}{\# U}} \\ 
 & & - p^{2n}x_{T'}y_{T'} - p^n x_{T'} \sum_{U < T'} \#U y_U^{\frac{p^n}{\# U}} - p^n y_{T'} \sum_{U < T'} \#U x_U^{\frac{p^n}{\#U}} \\
 & \equiv & p^n x_T y_0^{p^n}  + p^n x_0^{p^n} y_T  - p^n x_{T'} y_0^{p^n}  - p^n x_0^{p^n} y_{T'} \bmod p^{n+1}R \\
 & \equiv & p^n(x_Ty_0^{p^n}+ x_0^{p^n}y_T - x_{T'}y_0^{p^n} - x_0^{p^n}y_{T'}) \bmod p^{n+1}R \\
 & \equiv & p^n( (x_T - x_{T'})y_0^{p^n} +(y_T - y_{T'})x_0^{p^n}) \bmod p^{n+1}R.\\
\end{eqnarray*} 
So $z_T - z_{T'} \equiv (x_T - x_{T'})y_0^{p^n} +(y_T - y_{T'})x_0^{p^n} \bmod pR$. 
 \end{proof}
 
\begin{remark}
The characteristic $p$ hypothesis in Lemma \ref{lem:nicyclicprod} is necessary. Consider the case $G=\ZZ_p^d$ for $d \geq 2$ and $\W_G(\ZZ)$. Any pair of nonisomorphic $G$-sets of size $p$ is linked. Given two vectors $\mbf{a},\mbf{b} \in \W_G(\ZZ)$ such that $a_0 = b_0 = 0$, set $\mbf{m} = \mbf{ab}$. For $T \in \mcal{F}(G)$ such that $\# T = p$, the formula for $M_T$ in Example \ref{xmp:MTST} implies $m_T = pa_Tb_T$, which depends on $T$ for suitable choices of $\mbf{a}$ and $\mbf{b}$. 
\end{remark}

The point of these last two lemmas is that for linked $G$-sets $T$ and $T'$, the 
$T$ and $T'$ coordinates of a sum are the same if we make an assumption about the 
$T$ and $T'$ coordinates of the summands, while 
the $T$ and $T'$ coordinates of a product are the same if we make an assumption about the 
coordinates of the factors at the trivial $G$-set.

Let $k$ be a field of characteristic $p$.  For $n \geq 0$, recall the notation
$$I_{p^n} = I_{p^n}(G,k) = I_{p^n}(\ZZ_p^d,k) = \{ \mbf{a} \in \W_{\ZZ_p^d}(k) ; a_T = 0 \text{ if } \# T < p^n\}.$$
The unique maximal ideal of $\W_G(k)$ is $\goth{m} = I_p(G,k)$. 

\begin{thm}
\label{thm:equalc}
Let $T$ and $T'$ be linked $G$-sets. Any element of $\goth{m}^2$ has equal $T$ and $T'$ coordinates. 
\end{thm}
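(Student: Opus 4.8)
The plan is to reduce the statement immediately to the two preceding lemmas, Lemma \ref{lem:nicyclicsum} and Lemma \ref{lem:nicyclicprod}, since between them they handle the product step and the sum step. The only genuinely new observation is bookkeeping about what an element of $\goth{m}^2$ looks like.

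First I would recall that, because $G = \ZZ_p^d$ is a pro-$p$ group and $k$ has characteristic $p$, the maximal ideal is $\goth{m} = I_p(G,k) = \{\mbf{a} \in \W_G(k) : a_0 = 0\}$, as established in the discussion following Corollary \ref{cor:local}. Next, since $\goth{m}^2$ is the product of the ideal $\goth{m}$ with itself, it is exactly the set of finite sums $\mbf{w} = \sum_{i=1}^m \mbf{c}_i\mbf{d}_i$ with all $\mbf{c}_i,\mbf{d}_i \in \goth{m}$ (this set is already an ideal, being closed under addition and under multiplication by arbitrary elements of $\W_G(k)$, since $\goth{m}$ is an ideal). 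So fix linked $G$-sets $T$ and $T'$ and such an element $\mbf{w}$, and set $\mbf{m}_i = \mbf{c}_i\mbf{d}_i$ for $1 \le i \le m$.

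Now apply Lemma \ref{lem:nicyclicprod}: since $k$ has characteristic $p$, and each $\mbf{c}_i$ and $\mbf{d}_i$ has vanishing coordinate at the trivial $G$-set $0$ (being in $\goth{m}$), the product $\mbf{m}_i$ satisfies $(\mbf{m}_i)_T = (\mbf{m}_i)_{T'}$ for every $i$. Finally apply Lemma \ref{lem:nicyclicsum} to the finite collection $\mbf{m}_1,\dots,\mbf{m}_m$, whose members all have equal $T$- and $T'$-coordinates: the sum $\mbf{w} = \sum_{i=1}^m \mbf{m}_i$ then satisfies $w_T = w_{T'}$, which is what we want.

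I do not expect a real obstacle here; the content has all been pushed into the two lemmas, and the proof is essentially a one-line combination of them once $\goth{m}^2$ is correctly described as the set of finite sums of products of pairs of elements of $\goth{m}$. The only place to be careful is to make sure the hypotheses of both lemmas are literally met (characteristic $p$ for the product lemma, vanishing $0$-coordinates for its input vectors, and "equal $T$, $T'$ coordinates" for each summand fed into the sum lemma), all of which are immediate from $\goth{m} = \{\mbf{a}: a_0 = 0\}$.
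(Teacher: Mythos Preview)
Your proposal is correct and is essentially identical to the paper's own proof: write an arbitrary element of $\goth{m}^2$ as a finite sum of products of elements of $\goth{m}$, apply Lemma~\ref{lem:nicyclicprod} to each product (using $a_0=b_0=0$ and characteristic $p$), and then apply Lemma~\ref{lem:nicyclicsum} to the resulting sum. The paper's argument is the same two-line combination of the two lemmas, with slightly less bookkeeping spelled out.
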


\begin{proof}
Any element of $\goth{m}^2$ is $\mbf{a}_1\mbf{b}_1 + \cdots + \mbf{a}_r\mbf{b}_r$ for some 
$\mbf{a}_i$ and $\mbf{b}_i$ in $\goth{m}$. 
Lemma \ref{lem:nicyclicprod} shows that $\mbf{a}_i\mbf{b}_i$ has the same $T$ and $T'$ coordinates for all $i$.
Applying Lemma \ref{lem:nicyclicsum} shows that this equality of coordinates is preserved when passing to the sum of these products over all $i$. 
\end{proof} 

Since Lemma \ref{lem:nicyclic} shows that there are linked $G$-sets of arbitrarily large size, 
Theorem \ref{thm:equalc} puts infinitely many constraints on elements of $\goth{m}^2$. 
Theorem \ref{thm:equalc} was first observed in examples, where many 
redundancies were noticed in different coordinates of a product of elements of 
$\W_{\ZZ_2^2}(\mbf{F}_2[\underline{X},\underline{Y}])$ that each have first coordinate 0.  This is also how the importance of linked $G$-sets was discovered. We are now set to use them to prove our first main structural theorem.


\begin{thm}
\label{thm:Zpsqnotnoth}
The maximal ideal of $\W_G(k)$ is not finitely generated, so $\W_{G}(k)$ is not Noetherian.
\end{thm}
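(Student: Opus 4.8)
The plan is to deduce the theorem formally from Lemma~\ref{lem:dimmmodmsq} and then to sketch a proof of that lemma, which carries all of the content. Since $\W_G(k)$ is local with maximal ideal $\goth{m}$ and residue field $\W_G(k)/\goth{m}\cong k$ (Corollary~\ref{cor:local}), if $\goth{m}$ were generated by finitely many elements $\mbf{g}_1,\dots,\mbf{g}_r$ then the images of the $\mbf{g}_i$ would span $\goth{m}/\goth{m}^2$ as a module over $\W_G(k)/\goth{m}\cong k$, forcing $\dim_k(\goth{m}/\goth{m}^2)\leq r<\infty$ and contradicting Lemma~\ref{lem:dimmmodmsq}; hence $\goth{m}$ is not finitely generated. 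Since a Noetherian ring has all of its ideals finitely generated, $\W_G(k)$ is not Noetherian. So everything comes down to proving $\dim_k(\goth{m}/\goth{m}^2)=\infty$.

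For Lemma~\ref{lem:dimmmodmsq} I would exhibit an explicit infinite $k$-linearly independent family in $\goth{m}/\goth{m}^2$. Using Lemma~\ref{lem:nicyclic} iteratively --- starting from the trivial $G$-set of size $1$ and feeding one member of each new linked pair back in as the base for the next step (this is where $d\geq 2$ enters) --- choose for every $n\geq 1$ a linked pair $(T_n,T_n')$ of cyclic $G$-sets with $\#T_n=\#T_n'=p^n$. Because the two members of a linked pair are nonisomorphic of equal size, while the sizes $p^n$ are distinct across $n$, all of the $T_n$ are pairwise distinct and no $T_n'$ equals any $T_m$. Each $\omega_{T_n}(1)$ lies in $\goth{m}=I_p(G,k)=\{\mbf{a}:a_0=0\}$ since $T_n$ is nontrivial, and I claim the classes $\overline{\omega_{T_n}(1)}\in\goth{m}/\goth{m}^2$ are $k$-linearly independent, which gives $\dim_k(\goth{m}/\goth{m}^2)=\infty$ at once.

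To verify the claim, recall that the $k$-action on $\goth{m}/\goth{m}^2$ is induced by multiplication by Teichm\"uller lifts $\omega_0(c)$, and by Theorem~\ref{multx} one has $\omega_0(c)\,\omega_{T_n}(1)=\omega_{T_n}(c^{\#T_n})$. Moreover, by Theorem~\ref{thmRS} a Witt vector of finite support equals the sum of its Teichm\"uller lifts, so a finite relation $\sum_n c_n\,\overline{\omega_{T_n}(1)}=0$ in $\goth{m}/\goth{m}^2$ says precisely that the Witt vector $\mbf{v}$ with $v_{T_n}=c_n^{\#T_n}$ and all other coordinates $0$ lies in $\goth{m}^2$. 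But $(T_n,T_n')$ is a linked pair, so Theorem~\ref{thm:equalc} forces $v_{T_n}=v_{T_n'}$; since $T_n'$ is not among the $T_m$ we have $v_{T_n'}=0$, hence $c_n^{\#T_n}=0$ and therefore $c_n=0$ because $k$ is a field. Thus the family is linearly independent, and Lemma~\ref{lem:dimmmodmsq} follows.

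I expect the main difficulty to be organizational rather than deep, since all of the substantive inputs (Lemma~\ref{lem:nicyclic} producing arbitrarily large linked pairs, and Theorem~\ref{thm:equalc} constraining $\goth{m}^2$ along them) are already in hand: one has to keep the $k$-vector-space structure on $\goth{m}/\goth{m}^2$ straight --- scalars act through Frobenius-twisted Teichm\"uller multiplications rather than coordinatewise --- and one has to arrange the chosen $G$-sets to be genuinely pairwise distinct so that the off-diagonal coordinate $v_{T_n'}$ really vanishes. Once those bookkeeping points are settled, the infinitely many coordinate identities coming from Theorem~\ref{thm:equalc} immediately defeat finite generation of $\goth{m}$.
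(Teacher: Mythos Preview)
Your proposal is correct and follows essentially the same route as the paper's proof: both produce for each $n\geq 1$ a linked pair $(T_n,T_n')$ of size $p^n$ via Lemma~\ref{lem:nicyclic}, show the classes $\overline{\omega_{T_n}(1)}$ are $k$-independent in $\goth{m}/\goth{m}^2$ using Theorem~\ref{multx}, Theorem~\ref{thmRS}, and Theorem~\ref{thm:equalc}, and conclude $\goth{m}$ is not finitely generated. Your bookkeeping that no $T_n'$ coincides with any $T_m$ is exactly the implicit point the paper uses when it asserts the $T_i'$-coordinate of the sum is $0$.
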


\begin{proof}
Using Lemma \ref{lem:nicyclic}, for each $n \geq 1$ there are linked $G$-sets $T_n$ and $T_n'$ in $\mcal{F}(G)$ of size $p^n$. In particular, $\{ U : U < T_n \} = \{ U : U < T_n' \}$. 
The Witt vector $\omega_{T_n}(1)$ lies in $\goth{m}$. 
For each $r \geq 1$ we will show 
the $r$ Witt vectors $\omega_{T_1}(1), \dots, \omega_{T_r}(1)$ 
are linearly independent over $k$ in $\goth{m}/\goth{m}^2$. 

In $\goth{m}/\goth{m}^2$, suppose we have a $k$-linear relation 
$$
\alpha_1\omega_{T_1}(1) + \cdots + \alpha_r\omega_{T_r}(1) \equiv 0 \bmod \goth{m}^2,
$$
for some $\alpha_i \in k$.  The product $\alpha_i\omega_{T_i}(1)$ in 
$\goth{m}/\goth{m}^2$ really means, as a Witt vector, 
$\omega_0(\alpha_i)\omega_{T_i}(1) \bmod \goth{m}^2$. 
By Theorem \ref{multx}, $\omega_0(\alpha_i)\omega_{T_i}(1) = \omega_{T_i}(\alpha_i^{p^i})$.
Therefore
\begin{equation}\label{m2s}
\omega_{T_1}(\alpha_1^p) + \cdots + \omega_{T_r}(\alpha_r^{p^r}) \equiv 0 \bmod \goth{m}^2.
\end{equation}
Since the supports of $\omega_{T_i}(\alpha_i^{p^i})$ for $1 \leq i \leq r$ are disjoint, by Theorem \ref{thmRS} these Witt vectors can be added coordinatewise:  the left side of (\ref{m2s})
is the Witt vector with $T_i$-coordinate $\alpha_i^{p^i}$ and other coordinates equal to 0.  
Since this sum is in $\goth{m}^2$,  
its $T_i$- and $T_i'$-coordinates are the same by Theorem \ref{thm:equalc}. 
The $T_i'$-coordinate is 0 for all $i$, so $\alpha_i^{p^i} = 0$ for all $i$. 
Thus every $\alpha_i$ is 0 in $k$. 

Since we have found $r$ linearly independent elements of 
$\goth{m}/\goth{m}^2$ for any $r \geq 1$, its $k$-dimension is infinite. Therefore $\goth{m}$ is not finitely generated. 
\end{proof}

%

\begin{remark}
For an infinite pro-$p$ group $G$ with arbitrarily large pairs of linked normal $G$-sets, the results of this section go through. For such pro-$p$ groups, $\W_G(k)$ is not Noetherian when $k$ is a field (or ring) of characteristic $p$. 
\end{remark}

\section{Reducedness of $\W_G(k)$}\label{sec:reduced}

Although $\W_{\ZZ_p^d}(k)$ with $d > 2$ is not a domain and is not Noetherian, 
this is not a pathological state of affairs among $p$-adic rings. 
For instance, the ring $C(\ZZ_p,\QQ_p)$ of continuous functions 
from $\ZZ_p$ to $\QQ_p$ is not a domain and is not Noetherian.  
This ring is reduced.  So we ask: is $\W_{\ZZ_p^d}(k)$ reduced? We answer this in the case $d = 2$ and for any $p$.  

To do this, we keep track of a new partial ordering on $\mcal{F}(G)$. This new partial ordering makes sense with no extra effort for $G=\ZZ_p^d$ with any $d \geq 1$ so we state it in this generality. Consider the descending subgroup filtration
$$
G \supsetneqq pG \supsetneqq p^2G \supsetneqq \cdots \supsetneqq 
p^nG \supsetneqq p^{n+1}G \supsetneqq \cdots,
$$
which leads to the rising family of $G$-sets 
$$
0  = G/G < G/pG < G/p^2G < \cdots < G/p^nG < G/p^{n+1}G < \cdots
$$
in $\mcal{F}(G)$.  

\begin{dff}\label{dff:level}
For $G = \ZZ_p^d$, the {\it level} of $T \in \mcal{F}(G)$ is the largest $n \geq 0$ such that 
$T \geq G/p^nG$.  We write $\Lev(T) = n$.
\end{dff}

This definition makes sense, since
if $T \geq G/p^nG$  then $\#T \geq p^{nd}$, so $n$ is bounded 
above, and $T \geq 0$ so 
we have somewhere to begin. 
To get a feel for this concept, we describe it on coset spaces. 
Write $T \cong G/H$ for 
a unique open subgroup $H$ of $G$.  We have 
$T \geq G/p^nG$ if and only if $H \subset p^nG$.  
Therefore $\Lev(G/H)$ is the largest $n \geq 0$ such that 
$H \subset p^nG$. See Figure \ref{fig:Z2Lev}. 

\begin{figure}[htb]
\begin{center}
\scalebox{0.8}{
\begin{tikzpicture}[smooth]


\fill[black] (0,0) circle (0.1cm);

\fill[black] (1,0) circle (0.1cm);
\fill[black] (1,-1) circle (0.1cm);
\fill[black] (1,1) circle (0.1cm);

\fill[black] (3,1.25) circle (0.1cm);
\fill[black] (3,0.75) circle (0.1cm);

\fill[black] (3,0.25) circle (0.1cm);
\fill[black] (3,-0.25) circle (0.1cm);

\fill[black] (3,-0.75) circle (0.1cm);
\fill[black] (3,-1.25) circle (0.1cm);

\fill[black] (3,2.5) circle (0.1cm);

\fill[black] (5,3) circle (0.1cm);
\fill[black] (5,2.5) circle (0.1cm);
\fill[black] (5,2) circle (0.1cm);

\fill[black] (5,-1.2) circle (0.1cm);
\fill[black] (5,-1.45) circle (0.1cm);

\fill[black] (5,-0.65) circle (0.1cm);
\fill[black] (5,-0.9) circle (0.1cm);

\fill[black] (5,-0.15) circle (0.1cm);
\fill[black] (5,-0.4) circle (0.1cm);

\fill[black] (5,0.35) circle (0.1cm);
\fill[black] (5,0.1) circle (0.1cm);

\fill[black] (5,0.85) circle (0.1cm);
\fill[black] (5,0.6) circle (0.1cm);

\fill[black] (5,1.35) circle (0.1cm);
\fill[black] (5,1.1) circle (0.1cm);

\fill[black] (7,4.5) circle (0.1cm);

\fill[black] (7,0) circle (0.05cm);
\fill[black] (7.2,0) circle (0.05cm);
\fill[black] (7.4,0) circle (0.05cm);

\fill[black] (5.5,2.5) circle (0.05cm);
\fill[black] (5.7,2.5) circle (0.05cm);
\fill[black] (5.9,2.5) circle (0.05cm);

\fill[black] (8,5.25) circle (0.05cm);
\fill[black] (8.2,5.45) circle (0.05cm);
\fill[black] (8.4,5.65) circle (0.05cm);

\draw[-] (0,0) -- (1,0);
\draw[-] (0,0) -- (1,-1);
\draw[-] (0,0) -- (1,1);

\draw[-] (1,1) -- (3,1.25);
\draw[-] (1,1) -- (3,0.75);

\draw[-] (1,0) -- (3,0.25);
\draw[-] (1,0) -- (3,-0.25);

\draw[-] (1,-1) -- (3,-0.75);
\draw[-] (1,-1) -- (3,-1.25);

\draw[-] (-1,1.5) -- (8,1.5);

\draw[-] (1,1) -- (3,2.5);
\draw[-] (1,0) -- (3,2.5);
\draw[-] (1,-1) -- (3,2.5);

\draw[-] (3,2.5) -- (5,3);
\draw[-] (3,2.5) -- (5,2.5);
\draw[-] (3,2.5) -- (5,2);

\draw[-] (-1,3.5) -- (8,3.5);

\draw[-] (3,1.25) -- (5,1.35);
\draw[-] (3,1.25) -- (5,1.1);

\draw[-] (3,0.75) -- (5,0.85);
\draw[-] (3,0.75) -- (5,0.6);

\draw[-] (3,0.25) -- (5,0.35);
\draw[-] (3,0.25) -- (5,0.1);

\draw[-] (3,-0.25) -- (5,-0.15);
\draw[-] (3,-0.25) -- (5,-0.4);

\draw[-] (3,-0.75) -- (5,-0.65);
\draw[-] (3,-0.75) -- (5,-0.9);

\draw[-] (3,-1.25) -- (5,-1.2);
\draw[-] (3,-1.25) -- (5,-1.45);

\draw[-] (3,1.25) -- (5,3);
\draw[-] (3,0.75) -- (5,3);

\draw[-] (3,0.25) -- (5,2.5);
\draw[-] (3,-0.25) -- (5,2.5);

\draw[-] (3,-0.75) -- (5,2);
\draw[-] (3,-1.25) -- (5,2);

\draw[-] (5,3) -- (7,4.5);
\draw[-] (5,2.5) -- (7,4.5);
\draw[-] (5,2) -- (7,4.5);

\node at (-0.75,0) {Level $0$} {};
\node at (2,2.5) {Level $1$} {};
\node at (4,4.5) {Level $2$} {};

\end{tikzpicture}
}
\end{center}
\caption{Initial $\ZZ_2^2$-sets of level $0,1,2$.}
\label{fig:Z2Lev}
\end{figure}


If $G = \ZZ_p$ then $\Lev(T)$ and $\#T$ are basically the same concept, since $\#T = p^{\Lev(T)}$. 
The level is something genuinely new when $G = \ZZ_p^d$ for $d \geq 2$. 
In this case neither $\#T$ nor $\Lev(T)$ determines the other; 
all we can say in general is that $\#T \geq p^{d\Lev(T)}$.

Writing the cyclic decomposition of $G/H$ as 
$\ZZ/p^{a_1}\ZZ \times \ZZ/p^{a_2}\ZZ \times \cdots \times \ZZ/p^{a_d}\ZZ$, 
its level is $\min\{a_1,a_2,\ldots,a_d\}$.  This makes it easy to produce examples.

\begin{xmp}
For $a \geq 1$, the $G$-set $\ZZ_p^d/(\ZZ_p \times p^a\ZZ_p^{d-1})$ of size 
$p^{a(d-1)}$ has level $0$ since $\ZZ_p \times p^a\ZZ_p^{d-1}$ is contained in $G$ but not $pG$. 
So when $d \geq 2$, arbitrarily large $G$-sets can have level 0 (but not when $d = 1$). 
\end{xmp}

\begin{xmp}
There are 
arbitrarily large $G$-sets of any chosen level $n$ when $d \geq 2$: 
use $\ZZ_p^d/(p^n\ZZ_p \times p^{a+n}\ZZ_p^{d-1})$ with $a \rightarrow \infty$. 
\end{xmp}

\begin{thm}
\label{thm:levelncoverby2}
For $d \geq 2$, each $\ZZ_p^d$-set of level $n$ is covered by more than one $G$-set of level $n$. 
\end{thm}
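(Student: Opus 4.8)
The plan is to translate the whole statement into the lattice of open subgroups of $G$. Write $T\cong G/H$ for the unique open subgroup $H$ (unique because $G$ is abelian), and recall that $\Lev(T)=n$ means $H\subseteq p^nG$ but $H\not\subseteq p^{n+1}G$, while the covers of $T$ in $\mcal{F}(G)$ correspond exactly to the index-$p$ (equivalently, maximal) subgroups $H'\subset H$. The first observation I would make is that any such $H'$ satisfies $H'\subseteq H\subseteq p^nG$, so $\Lev(G/H')\geq n$ \emph{for free}; hence a cover $G/H'$ of $T$ has level $n$ precisely when $H'\not\subseteq p^{n+1}G$. Thus the theorem reduces to the claim that more than one index-$p$ subgroup of $H$ avoids $p^{n+1}G$.

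Next I would introduce $H_0:=H\cap p^{n+1}G$, an open subgroup of $H$ which is \emph{proper} because $\Lev(T)=n$ forces $H\not\subseteq p^{n+1}G$. An index-$p$ subgroup $H'$ of $H$ fails to give a level-$n$ cover exactly when $H'\subseteq p^{n+1}G$, i.e. (since $H'\subseteq H$) when $H'\subseteq H_0$. The key elementary point is that if $H'\subseteq H_0\subsetneq H$ with $[H:H']=p$, then $p=[H:H']=[H:H_0][H_0:H']$ forces $H_0=H'$; hence there is \emph{at most one} index-$p$ subgroup of $H$ contained in $H_0$ (namely $H_0$ itself, and only if $[H:H_0]=p$).

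Finally I would count. Since $H\cong\ZZ_p^d$, Theorem~\ref{thm:coversinZpd} (or a direct count of lines in $H/pH\cong(\ZZ/p\ZZ)^d$) shows $H$ has exactly $1+p+\cdots+p^{d-1}$ index-$p$ subgroups. Discarding the at most one contained in $H_0$ leaves at least $p+p^2+\cdots+p^{d-1}\geq p\geq 2$ covers of $T$ of level $n$. As $d\geq 2$ this is strictly more than one, which is exactly the assertion; functoriality in $T$ is automatic since the argument was made for an arbitrary level-$n$ $G$-set.

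I do not expect a genuine obstacle here: essentially all the content is in the reformulation, after which the proof is a two-line index computation. The only points requiring a moment's care are (i) noting that passing from $H$ down to $H'$ can only \emph{lose} level, so that the single constraint $H'\not\subseteq p^{n+1}G$ is all that can go wrong, and (ii) checking that the bound $\bigl(1+p+\cdots+p^{d-1}\bigr)-1\geq 2$ genuinely uses $d\geq 2$ (for $d=1$ there is a single cover, and it can indeed jump a level, so the statement is false there).
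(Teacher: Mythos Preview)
Your proof is correct and takes a genuinely different route from the paper's. The paper argues constructively: it diagonalizes $H$ against a $\ZZ_p$-basis so that $H=\bigoplus_i \ZZ_p p^{a_i}e_i$ with $n=a_1\leq\cdots\leq a_d$, and then writes down two explicit index-$p$ subgroups of $H$ (with separate formulas in the cases $d\geq 3$ and $d=2$) whose minimum exponent is still $n$. Your argument instead counts: you observe that every index-$p$ subgroup $H'\subset H$ automatically sits in $p^nG$, so the only obstruction to $\Lev(G/H')=n$ is $H'\subseteq H_0:=H\cap p^{n+1}G$; since $H_0\subsetneq H$ and $[H:H']=p$, at most one $H'$ (namely $H_0$ itself, and only when $[H:H_0]=p$) can be bad, leaving at least $p+\cdots+p^{d-1}\geq 2$ good ones.

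Your approach is cleaner in that it avoids the case split on $d$ and the explicit basis manipulation, and it yields the sharper quantitative statement that at least $p+p^2+\cdots+p^{d-1}$ covers have level $n$. The paper's approach, on the other hand, hands you concrete covers, which is sometimes what one wants downstream. One small remark: your parenthetical phrase ``passing from $H$ down to $H'$ can only \emph{lose} level'' is phrased backwards (the $G$-set $G/H'$ has level at least that of $G/H$, not at most), though the surrounding argument makes clear you have the correct inequality in mind.
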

\begin{proof}
Let $T$ be a $G$-set with $\Lev(T) = n$. Pick a $\ZZ_p$-basis $\{ e_1, \ldots, e_d\}$ of $\ZZ_p^d$ so that $T \cong G/H$ with $H = \ZZ_p p^{a_1} e_1 + \ldots + \ZZ_p p^{a_d} e_d$, where $n = a_1 \leq a_2 \leq \ldots \leq a_d$. When $d \geq 3$, $\sum_{i = 1}^d \ZZ_p p^{a_i}e_i + \ZZ_p p^{a_d + 1}e_d$ and $\sum_{i \neq d-1} \ZZ_p p^{a_i}e_i + \ZZ_p p^{a_{d-1}+1}e_{d-1}$ are subgroups of $H$ with index $p$ and they are the stabilizers of two distinct $G$-sets covering $T$ both with level $n$. 
If $d = 2$, on the other hand, $\ZZ_p p^{a_1} e_1 + \ZZ_p p^{a_2 + 1}e_2$ and $\ZZ_p (p^{a_{1}+1} e_{1} + p^{a_1} e_2) +\ZZ_p p^{a_2} e_2$ are subgroups of $H$ with index $p$ and they are the stabilizers of two distinct $\ZZ_p^2$-sets covering $T$ both with level $n$. 
\end{proof}

For any $T \in \mcal{F}(G)$, $\{U : \#U \leq \#T\}$ and $\{U : U \leq T\}$ are finite 
(the latter is a subset of the former), but $\{U : \Lev(U) \leq \Lev(T)\}$ is infinite. 
This is an important distinction to remember.

\begin{remark}
\label{rmk:cyclicchain}
When $d \geq 2$, any nontrivial cyclic $G$-set looks like $\ZZ_p^d/(\ZZ_p^{d-1} \times p^a\ZZ_p)$ with $a \geq 1$
after a suitable choice of basis for $\ZZ_p^d$,  
and $\ZZ_p^{d-1} \times p^a\ZZ_p$ is not contained in $p\ZZ_p^d$ (this is false 
for $d = 1$), so all cyclic $G$-sets have level 0. When $d = 2$, a $G$-set of level $0$ is isomorphic to $\ZZ_p^2/(\ZZ_p \times p^a\ZZ_p)$, so having 
level 0 and cyclic in $\mcal{F}(\ZZ_p^2)$ mean the same thing. 
For $d \geq 3$, some $G$-sets of level 0 are not cyclic, such as 
$\ZZ_p^d/(\ZZ_p \times p\ZZ_p \times p^a\ZZ_p^{d-2})$ with $a \geq 1$.
\end{remark}

To prove that $\W_{\ZZ_p^2}(k)$ is reduced when $k$ has characteristic $p$, 
we seek the right coordinate to look at in a power of a nonzero Witt vector to know that the power is also not $\mbf{0}$.  

Say $\mbf{x} \in \W_G(k)$ and $\mbf{x} \not= \mbf{0}$.  It is natural to consider how 
$\mbf{x}$ sits in the descending ideal filtration $\{I_{p^n}\}$:
there is some $I_{p^n}$ for which $\mbf{x} \in I_{p^n}$ and $n$ is as large as possible, so 
$x_T = 0$ for $\#T < p^n$ and some $x_T$ is nonzero where $\#T = p^n$. 
In this case it is not hard to check that $\mbf{x}^2 \in I_{p^{2n}}$ (Corollary \ref{cor:IFillpower}), and we can anticipate (if $\W_G(k)$ is reduced) that 
$\mbf{x}^2$ has a nonzero coordinate at some $G$-set of size $p^{2n}$. 
Which one?  We need a way to predict a nonzero coordinate in $\mbf{x}^2$ when $\mbf{x} \not= \mbf{0}$.  


\begin{lem}
\label{lem:levelorder}
Let $G = \ZZ_p^d$. If  $U \leq T$ in $\mcal{F}(G)$ then $\Lev(U) \leq \Lev(T)$. 
\end{lem}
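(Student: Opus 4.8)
The plan is to reduce the statement to the transitivity of the partial order $\leq$ on $\mcal{F}(G)$, which is essentially immediate. Recall that by definition $\Lev(U)$ is the largest $m \geq 0$ with $G/p^mG \leq U$, so to bound $\Lev(U)$ by $\Lev(T)$ it suffices to show that this same $m$ also satisfies $G/p^mG \leq T$.

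So I would first set $m = \Lev(U)$, which gives $G/p^mG \leq U$ by the definition of the level. Since we are given $U \leq T$, transitivity of $\leq$ — which holds because the composite of a $G$-map $T \to U$ with a $G$-map $U \to G/p^mG$ is again a $G$-map $T \to G/p^mG$ — yields $G/p^mG \leq T$. Hence $m$ is among the integers $n$ for which $G/p^nG \leq T$, so $\Lev(T) \geq m = \Lev(U)$, as desired.

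Alternatively one can phrase the same argument on coset spaces: write $T \cong G/H$ and $U \cong G/K$ for the (unique, since $G$ is abelian) open subgroups $H$ and $K$. Then $U \leq T$ means $H \subseteq K$, and $\Lev(G/K) = m$ means $K \subseteq p^mG$, whence $H \subseteq K \subseteq p^mG$ and therefore $\Lev(G/H) \geq m$. There is no real obstacle in this lemma: the only inputs are transitivity of $\leq$ (equivalently, transitivity of subgroup containment) together with the fact that the $p^nG$ are normal, so that "contained in a conjugate of $p^nG$" coincides with "contained in $p^nG$"; I would simply spell out whichever of the two formulations reads most cleanly in context.
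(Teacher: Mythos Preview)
Your proposal is correct and follows essentially the same approach as the paper: set $n=\Lev(U)$, use $G/p^nG \leq U \leq T$ and transitivity to conclude $\Lev(T)\geq n$. The paper's proof is just your first formulation stated in one line; your alternative coset-space version is fine too but unnecessary for such a short argument.
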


\begin{proof}
Let $n = \Lev(U)$, so $U \geq G/p^nG$. 
Since $T \geq U$, we have $T \geq G/p^nG$, so 
$\Lev(T) \geq n$. 
\end{proof}

Note that if $U < T$ and $d \geq 2$ it need not follow that $\Lev(U) < \Lev(T)$: we might have $\Lev(U) = \Lev(T)$. For example, if $U = G/H$ and $T = G/K$ where $H = p\ZZ_p^d = p\ZZ_p^{d-1} \times p\ZZ_p$ and $K =  p\ZZ_p^{d-1} \times p^2 \ZZ_p$ then $K \subset H$ so $U < T$ and $\Lev(U) = \Lev(T) = 1$.

\begin{lem}
\label{lem:scaling}
Let $G = \ZZ_p^d$ and $T \in \mcal{F}(G)$ have level $n$. Write $T \cong G/H$ with $H \subset p^n \ZZ_p^d$ and $H \not\subset p^{n + 1}\ZZ_p^d$ so $H = p^n \widehat{H}$ for a unique open subgroup $\widehat{H}$ of $\ZZ_p^d$. Set $\widehat{T} = G/\widehat{H}$. Then $\widehat{T}$ has level $0$. Moreover, if $U \in \mcal{F}(G)$ and $\Lev(U) = n$ then $U \leq T$ if and only if $\widehat{U} \leq \widehat{T}$. 
\end{lem}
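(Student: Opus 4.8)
The plan is to reduce everything to one observation: multiplication by $p^n$ on $G = \ZZ_p^d$ is an injective continuous endomorphism whose image is exactly $p^nG$, so it restricts to a topological group isomorphism $G \xrightarrow{\sim} p^nG$, and in particular it both preserves and reflects inclusions of subgroups. First I would make the definition of $\widehat{T}$ precise. Since $G$ is abelian, $T$ determines its stabilizer $H$ uniquely (the common stabilizer of all points of $T$), and $\Lev(T) = n$ says exactly that $H \subseteq p^nG$ but $H \not\subseteq p^{n+1}G$. Pulling $H$ back through the isomorphism above produces the open subgroup $\widehat{H} = \{x \in G : p^n x \in H\}$, which satisfies $p^n\widehat{H} = H$ and is the unique subgroup with that property by injectivity of multiplication by $p^n$; it is open because it contains $p^{N-n}G$ for any $N \geq n$ with $p^NG \subseteq H$ (such $N$ exists as $H$ is open). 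Set $\widehat{T} = G/\widehat{H}$.

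For the first assertion, I would just unwind the definition: $\Lev(\widehat{T})$ is the largest $m \geq 0$ with $\widehat{H} \subseteq p^mG$. If $m \geq 1$ were allowed, then $H = p^n\widehat{H} \subseteq p^n \cdot pG = p^{n+1}G$, contradicting $\Lev(T) = n$. Hence $\Lev(\widehat{T}) = 0$.

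For the second assertion, let $U \cong G/K$ with $\Lev(U) = n$; exactly as above, $K \subseteq p^nG$, $K \not\subseteq p^{n+1}G$, and there is a unique open subgroup $\widehat{K}$ with $p^n\widehat{K} = K$, so that $\widehat{U} = G/\widehat{K}$. Since $G$ is abelian, the concrete description of $\leq$ from Section \ref{sec:prelim} gives $U \leq T \iff H \subseteq K$ and, likewise, $\widehat{U} \leq \widehat{T} \iff \widehat{H} \subseteq \widehat{K}$. Now $H \subseteq K$ is equivalent to $p^n\widehat{H} \subseteq p^n\widehat{K}$, and this is equivalent to $\widehat{H} \subseteq \widehat{K}$: one direction is immediate, and conversely if $x \in \widehat{H}$ then $p^n x \in p^n\widehat{H} \subseteq p^n\widehat{K}$, so $p^n x = p^n y$ for some $y \in \widehat{K}$, whence $x = y \in \widehat{K}$ by injectivity. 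Chaining these equivalences yields $U \leq T \iff \widehat{U} \leq \widehat{T}$.

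I do not expect a genuine obstacle here; the only points needing any care are confirming $\widehat{H}$ is again an \emph{open} subgroup and invoking the abelian form of the ordering on $\mcal{F}(G)$ in the correct direction ($G/K \leq G/H$ iff $H \subseteq K$). An alternative, more computational route fixes a $\ZZ_p$-basis in which $H = \bigoplus_i \ZZ_p p^{a_i} e_i$ with $\min_i a_i = n$, so that $\widehat{H} = \bigoplus_i \ZZ_p p^{a_i - n} e_i$ and $\widehat{T} \cong \prod_i \ZZ_p/p^{a_i - n}\ZZ_p$ visibly has level $0$, with inclusions of subgroups checked coordinatewise; but the basis-free argument above is cleaner and avoids choices.
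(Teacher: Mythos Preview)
Your proof is correct and follows essentially the same approach as the paper's. The paper chooses a $\ZZ_p$-basis putting $H$ in diagonal form to read off $\widehat{H}$ and verify $\Lev(\widehat{T})=0$, then for the second claim simply asserts that the conditions $H\subset K$ and $\widehat{H}\subset\widehat{K}$ are the same; your basis-free argument via injectivity of multiplication by $p^n$ is exactly what justifies that assertion, and you even note the coordinate version as an alternative.
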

\begin{proof}
By choosing a suitable basis of $\ZZ_p^d$, we may assume $H = p^{a_1}\ZZ_p \times \ldots \times p^{a_d}\ZZ_p$. The statement that $T$ has level $n$ means $n = \min\{a_1,\ldots,a_d\}$, so $H = p^n (p^{b_1}\ZZ_p \times \ldots \times p^{b_d}\ZZ_p)$ with some $b_i = 0$. Then $\widehat{H} = p^{b_1}\ZZ_p \times \ldots \times p^{b_d}\ZZ_p$ and $\widehat{T}$ has level $0$.

To prove the second claim, write $U \cong G/K$ and $K = p^n \widehat{K}$. We have $U \leq T$ if and only if $H \subset K$ and $\widehat{U} \leq \widehat{T}$ if and only if $\widehat{H} \subset \widehat{K}$. The conditions $H \subset K$ and $\widehat{H} \subset \widehat{K}$ are the same. 
\end{proof}

\begin{lem}
\label{lem:levcover}
For $G = \ZZ_p^d$ with $d \geq 2$ and $T \in \mcal{F}(G)$, set $\# T = p^n$. For each $m \geq n$, there exist $T'$ such that $\# T' = p^m$, $T \leq T'$ and $\Lev(T)=\Lev(T')$.
\end{lem}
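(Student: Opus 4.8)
The plan is to argue directly with the subgroup description of the frame of $\ZZ_p^d$ set up in Section \ref{secd2}. Write $T \cong G/H$ for the unique open subgroup $H \subset G = \ZZ_p^d$, and choose a $\ZZ_p$-basis $\{e_1,\dots,e_d\}$ of $G$ adapted to $H$, so that $H = \bigoplus_{i=1}^{d} p^{a_i}\ZZ_p e_i$ with $a_1,\dots,a_d \geq 0$ and $a_1 + \cdots + a_d = n$. As recorded earlier, $\Lev(T) = \min\{a_1,\dots,a_d\}$; call this value $\ell$. Recall also (abelianness of $G$) that $T \leq T'$ is equivalent to the reverse containment of stabilizers: if $T' \cong G/H'$ then $T \leq T'$ iff $H' \subseteq H$, and in that case $\#T' = [G:H']$. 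So it suffices, given $m \geq n$, to produce an open subgroup $H' \subset H$ with $[G:H'] = p^m$ whose adapted exponent vector still has minimum $\ell$; then $T' := G/H'$ has $\#T' = p^m$, $T \leq T'$, and $\Lev(T') = \ell = \Lev(T)$.

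If $m = n$ we take $T' = T$. If $m > n$, the point where $d \geq 2$ enters is this: we can choose an index $j \in \{1,\dots,d\}$ such that $\min\{a_i : i \neq j\}$ is still $\ell$ — if the minimum of the $a_i$ is attained at a unique index $j_0$, any $j \neq j_0$ works, while if it is attained at two or more indices, any $j$ works, and since $d \geq 2$ there is always such a $j$. Set
$$
H' = \Big(\bigoplus_{i \neq j} p^{a_i}\ZZ_p e_i\Big) \oplus p^{\,a_j + (m-n)}\ZZ_p e_j.
$$
Then $H' \subset H$ with $[H:H'] = p^{m-n}$, so $[G:H'] = p^m$; and the exponent vector of $H'$ has minimum $\min(\ell,\, a_j + (m-n)) = \ell$ because $a_j + (m-n) \geq a_j \geq \ell$. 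This $T' = G/H'$ has all the required properties.

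An alternative, slicker route is induction on $m-n$ using Theorem \ref{thm:levelncoverby2}: that theorem supplies, for any $G$-set $S$ of level $\ell$, a cover $S'$ of $S$ with $\Lev(S') = \ell$, and a cover in the pro-$p$ setting multiplies the size by $p$; iterating $m-n$ times starting from $T$ produces $T'$. I do not expect a genuine obstacle here. The only things needing a moment's care are (i) checking that the minimum of the exponent vector is genuinely preserved — this is exactly the step that fails for $d = 1$, where every proper enlargement of size raises the level — and (ii) keeping straight that $T \leq T'$ corresponds to $H' \subseteq H$ rather than $H \subseteq H'$; both are immediate from the structure theory for open subgroups of $\ZZ_p^d$ already in place.
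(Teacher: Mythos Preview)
Your proof is correct and takes essentially the same approach as the paper: write $H = \bigoplus p^{a_i}\ZZ_p e_i$, then increase one exponent $a_j$ by $m-n$ at an index $j$ whose removal does not change the minimum. The paper's version simply orders the $a_i$ so that $a_1 \leq \cdots \leq a_d$ and takes $j = d$, which is a special case of your choice of $j$; your alternative induction via Theorem \ref{thm:levelncoverby2} is also valid and is not used in the paper.
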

\begin{proof}
Write $T = G/H$. Choose a $\ZZ_p$-basis $\{e_1,e_2,\ldots,e_d\}$ of $\ZZ_p^d$ such that $H = \sum_{i=1}^d \ZZ_p p^{a_i} e_i$. Without loss of generality, assume $a_1 \leq a_2 \leq \ldots \leq a_d$ and set $K = \sum_{i=1}^{d-1} \ZZ_p p^{a_i} e_i + \ZZ_p p^{a_d + m-n} e_d$. Then $K \subset H$ and $[H : K] = p^{m-n}$ so $G/K$ is a cover of $G/H$. Since $a_1 \leq a_2 \leq \ldots \leq a_d$ and $d \geq 2$, $\min\{a_1,a_2,\ldots,a_d\} = \min \{ a_1,a_2,\ldots, a_d + m-n\}$, so $\Lev(G/K) = \Lev(G/H)$. 
\end{proof}

Consider Figure \ref{fig:Z2Lev}. The two horizontal lines divide the diagram into regions of $\ZZ_2^2$-sets with the same level. Lemma \ref{lem:levcover} just says that these levels go infinitely far out in the frame. 

Returning to the assumption $G = \ZZ_p^2$ we have the following lemma. 

\begin{lem}
\label{lem:level1}
When $G = \ZZ_p^2$ and $T$ and $T'$ in $\mcal{F}(G)$ satisfy  $T \leq T'$ and $\Lev(T) = \Lev(T')$, $$\{ U \in \mcal{F}(G) : U \leq T' \mbox{ and } \tn{Lev}(U) = \Lev(T) \mbox{ and } \# U = \# T \} = \{T\}.$$ 
\end{lem}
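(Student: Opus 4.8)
The plan is to push everything down to level $0$ via the scaling construction of Lemma \ref{lem:scaling}, and then exploit that a level-$0$ $G$-set in $\mcal{F}(\ZZ_p^2)$ is cyclic, so that its strict downset is a chain containing exactly one $G$-set of each size.

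First I would set $n = \Lev(T) = \Lev(T')$ and write $T \cong G/H$, $T' \cong G/H'$ with $H,H' \subset p^n\ZZ_p^2$ but not contained in $p^{n+1}\ZZ_p^2$; by Lemma \ref{lem:scaling} this gives $H = p^n\widehat{H}$, $H' = p^n\widehat{H'}$, and $\widehat{T} = G/\widehat{H}$, $\widehat{T'} = G/\widehat{H'}$ both have level $0$. Because $G$ is abelian, $T \le T'$ is equivalent to $H' \subset H$, hence to $\widehat{H'} \subset \widehat{H}$, i.e. $\widehat{T} \le \widehat{T'}$. A routine index count shows $\#(G/p^n\widehat{L}) = p^{2n}\,\#(G/\widehat{L})$ for any open subgroup $\widehat L$ of $\ZZ_p^2$, so the passage $(\cdot)\mapsto\widehat{(\cdot)}$ scales sizes by $p^{-2n}$; in particular, for a $G$-set $U$ of level $n$ the condition $\#U = \#T$ is the same as $\#\widehat{U} = \#\widehat{T}$.

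Now take $U$ with $U \le T'$, $\Lev(U) = n$, and $\#U = \#T$. Applying Lemma \ref{lem:scaling} with $T'$ in the role of $T$ (legitimate since $\Lev(U) = \Lev(T') = n$) gives $\widehat{U} \le \widehat{T'}$, and by the previous step $\#\widehat{U} = \#\widehat{T}$. Since $\widehat{T'}$ has level $0$ in $\mcal{F}(\ZZ_p^2)$, it is a cyclic $G$-set (Remark \ref{rmk:cyclicchain}); writing $\widehat{T'} = G/\widehat{H'}$ with $G/\widehat{H'}$ cyclic, the subgroups of $G$ containing $\widehat{H'}$ are in bijection with the subgroups of the cyclic $p$-group $G/\widehat{H'}$, so there is a unique $G$-set of each size $\le \#\widehat{T'}$ lying below $\widehat{T'}$ (and $\#\widehat{T} \le \#\widehat{T'}$ since $T \le T'$ forces $\#T \mid \#T'$ by Lemma \ref{lem:fiber}). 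Both $\widehat{T}$ and $\widehat{U}$ lie below $\widehat{T'}$ with the same size $\#\widehat{T}$, so $\widehat{U} = \widehat{T}$, whence $U = T$ after multiplying the defining subgroups back by $p^n$. Since $T$ itself visibly lies in the left-hand set, this proves the equality. The one place where $d = 2$ is essential is the cyclicity of $\widehat{T'}$: for $d \ge 3$ a level-$0$ $G$-set need not be cyclic (Remark \ref{rmk:cyclicchain}), so its downset need not be a chain, and the argument breaks; this cyclicity step is also the crux, everything else being bookkeeping about levels and indices.
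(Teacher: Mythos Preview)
Your proof is correct and follows essentially the same route as the paper: reduce to level $0$ via Lemma \ref{lem:scaling}, then use that level-$0$ $\ZZ_p^2$-sets are cyclic (equivalently, that the level-$0$ part of the frame is a tree) so the downset of $\widehat{T'}$ contains a unique $G$-set of each size. The paper's proof is a two-line sketch of exactly this; you have simply filled in the bookkeeping about how sizes and the partial order transform under the $p^n$-scaling.
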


\begin{proof}
The set of $G$-sets of level zero is a tree, so the property is obviously true when $\Lev(T)=\Lev(T')= 0$. Using Lemma \ref{lem:scaling} one has the result for any level. 
\end{proof}

\begin{lem}
\label{lem:nonzero}
Let $G = \ZZ_p^2$, $A$ be a nonzero commutative ring, and choose any nonzero $\mbf{a} \in W_{G}(A)$. There is 
$T_0 \in \mcal{F}(G)$ 
such that $a_{T_0} \neq 0$ and $a_U = 0$ under either of the following conditions:
\begin{itemize} 
\item $\Lev(U) < \Lev(T_0)$,
\item $\Lev(U) = \Lev(T_0)$ and $\# U < \# T_0$,
\end{itemize}
\end{lem}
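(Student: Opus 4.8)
The plan is to obtain $T_0$ by a two–stage minimization over the support of $\mbf{a}$. First I would set $\tn{Supp}(\mbf{a}) = \{T \in \mcal{F}(G) : a_T \neq 0\}$, which is nonempty because $\mbf{a} \neq \mbf{0}$. The set of levels $\{\Lev(T) : T \in \tn{Supp}(\mbf{a})\}$ is then a nonempty subset of $\mbf{N}$, so it has a least element; call it $n$. In particular there is at least one $T \in \tn{Supp}(\mbf{a})$ with $\Lev(T) = n$.

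Next, among the (possibly infinitely many, since there are infinitely many $G$-sets of a given level when $d \geq 2$) $G$-sets $T \in \tn{Supp}(\mbf{a})$ with $\Lev(T) = n$, I would minimize the size. Each $\#T$ is a power of $p$, so the sizes occurring here form a nonempty subset of the well-ordered set $\{p^k : k \geq 0\}$ and hence have a least element $p^m$. I then choose any $T_0 \in \tn{Supp}(\mbf{a})$ with $\Lev(T_0) = n$ and $\#T_0 = p^m$; by construction $a_{T_0} \neq 0$.

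Finally I would check the two vanishing conditions directly from minimality. If $\Lev(U) < \Lev(T_0) = n$, then $U \notin \tn{Supp}(\mbf{a})$ by minimality of $n$, so $a_U = 0$. If $\Lev(U) = \Lev(T_0)$ and $\#U < \#T_0 = p^m$, then $U$ cannot lie in $\tn{Supp}(\mbf{a})$ by minimality of $p^m$ among sizes of support elements of level $n$, so again $a_U = 0$. This completes the argument. The proof uses nothing about $G = \ZZ_p^2$ beyond the facts that $\Lev$ is $\mbf{N}$-valued and that $\{\#T : T \in \mcal{F}(G)\}$ is well-ordered, so there is no genuine obstacle here; the only point that warrants a line of care is that the second minimization ranges over a possibly infinite family of $G$-sets, which is harmless precisely because the corresponding set of sizes is still well-ordered.
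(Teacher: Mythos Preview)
Your proof is correct and follows essentially the same two-stage minimization as the paper: first minimize level over the support, then minimize size among support elements of that minimal level, and verify the two vanishing conditions by minimality. Your added care about well-ordering and the remark that nothing specific to $\ZZ_p^2$ is used are both in line with the paper's own observations in the remarks following the lemma.
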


\begin{proof}
Given $\mbf{a} \neq \mbf{0}$ in $\W_{G}(A)$, among $\{ T : a_T \neq 0 \}$ 
first select all $T$ with minimal level, and then among the $T$ of that minimal level, 
choose one $T$ of minimal size. Call that $T_0$.

If $\Lev(U) < \Lev(T_0)$ then $a_U = 0$ since $T_0$ is a nonzero coordinate with minimal level.
If $\Lev(U) = \Lev(T_0)$ and $\#U < \#T_0$ then $a_U = 0$ since 
otherwise $T_0$ is not a nonzero coordinate of minimal size among nonzero coordinates of minimal level.
\end{proof}

\begin{remark}
Lemma \ref{lem:nonzero} is expressed in a form convenient for the applications we have in 
mind, but it's not a result about nonzero Witt vectors so much as a property of 
$\mcal{F}(G)$: for any nonempty subset $\mcal{S}$ of $\mcal{F}(G)$, there is a $T_0 \in \mcal{S}$ 
such that $U \not\in \mcal{S}$ if $\Lev(U) < \Lev(T_0)$, or if  
$\Lev(U) = \Lev(T_0)$ and $\#U < \#T_0$.
\end{remark}

\begin{remark}
In our application of Lemma \ref{lem:nonzero} it is important to note the order in which the concepts are minimized. Here we are first choosing nonzero $G$-sets of minimal level, then among those we choose one of minimal size. These two minimizations do not commute. Figure \ref{fig:Z2MinLevel} depicts a nonzero element of $\W_{\ZZ_2^2}(k)$, where all coordinates are zero except for the circled ones which are non-zero. The $T$ coordinate is the one of minimal size first and then level, whereas the $U$ coordinate is the one of minimal level first and then size. 
\end{remark}

\begin{figure}[htb]
\begin{center}
\scalebox{0.8}{
\begin{tikzpicture}[smooth]


\fill[black] (0,0) circle (0.1cm);

\fill[black] (1,0) circle (0.1cm);
\fill[black] (1,-1) circle (0.1cm);
\fill[black] (1,1) circle (0.1cm);

\fill[black] (3,1.25) circle (0.1cm);
\fill[black] (3,0.75) circle (0.1cm);

\fill[black] (3,0.25) circle (0.1cm);
\fill[black] (3,-0.25) circle (0.1cm);

\fill[black] (3,-0.75) circle (0.1cm);
\fill[black] (3,-1.25) circle (0.1cm);

\fill[black] (3,2.5) circle (0.1cm);

\fill[black] (5,3) circle (0.1cm);
\fill[black] (5,2.5) circle (0.1cm);
\fill[black] (5,2) circle (0.1cm);

\fill[black] (5,-1.2) circle (0.1cm);
\fill[black] (5,-1.45) circle (0.1cm);

\fill[black] (5,-0.65) circle (0.1cm);
\fill[black] (5,-0.9) circle (0.1cm);

\fill[black] (5,-0.15) circle (0.1cm);
\fill[black] (5,-0.4) circle (0.1cm);

\fill[black] (5,0.35) circle (0.1cm);
\fill[black] (5,0.1) circle (0.1cm);

\fill[black] (5,0.85) circle (0.1cm);
\fill[black] (5,0.6) circle (0.1cm);

\fill[black] (5,1.35) circle (0.1cm);
\fill[black] (5,1.1) circle (0.1cm);

\fill[black] (7,0) circle (0.05cm);
\fill[black] (7.2,0) circle (0.05cm);
\fill[black] (7.4,0) circle (0.05cm);

\fill[black] (5.5,2.5) circle (0.05cm);
\fill[black] (5.7,2.5) circle (0.05cm);
\fill[black] (5.9,2.5) circle (0.05cm);

\fill[black] (4,3.25) circle (0.05cm);
\fill[black] (4.2,3.45) circle (0.05cm);
\fill[black] (4.4,3.65) circle (0.05cm);

\draw[-] (0,0) -- (1,0);
\draw[-] (0,0) -- (1,-1);
\draw[-] (0,0) -- (1,1);

\draw[-] (1,1) -- (3,1.25);
\draw[-] (1,1) -- (3,0.75);

\draw[-] (1,0) -- (3,0.25);
\draw[-] (1,0) -- (3,-0.25);

\draw[-] (1,-1) -- (3,-0.75);
\draw[-] (1,-1) -- (3,-1.25);

\draw[-] (-1,1.5) -- (6,1.5);

\draw[-] (1,1) -- (3,2.5);
\draw[-] (1,0) -- (3,2.5);
\draw[-] (1,-1) -- (3,2.5);

\draw[-] (3,2.5) -- (5,3);
\draw[-] (3,2.5) -- (5,2.5);
\draw[-] (3,2.5) -- (5,2);

\draw[-] (3,1.25) -- (5,1.35);
\draw[-] (3,1.25) -- (5,1.1);

\draw[-] (3,0.75) -- (5,0.85);
\draw[-] (3,0.75) -- (5,0.6);

\draw[-] (3,0.25) -- (5,0.35);
\draw[-] (3,0.25) -- (5,0.1);

\draw[-] (3,-0.25) -- (5,-0.15);
\draw[-] (3,-0.25) -- (5,-0.4);

\draw[-] (3,-0.75) -- (5,-0.65);
\draw[-] (3,-0.75) -- (5,-0.9);

\draw[-] (3,-1.25) -- (5,-1.2);
\draw[-] (3,-1.25) -- (5,-1.45);

\draw[-] (3,1.25) -- (5,3);
\draw[-] (3,0.75) -- (5,3);

\draw[-] (3,0.25) -- (5,2.5);
\draw[-] (3,-0.25) -- (5,2.5);

\draw[-] (3,-0.75) -- (5,2);
\draw[-] (3,-1.25) -- (5,2);

\draw (3,2.5) circle (0.15cm);

\draw (5,-1.45) circle (0.15cm);

\node at (-0.75,0) {Level $0$} {};
\node at (2,2.5) {Level $1$} {};

\node at (3,2.9) {T} {};

\node at (5.4,-1.45) {U} {};

\end{tikzpicture}
}
\end{center}
\caption{Nonzero element in $\W_{\ZZ^2_2}$}
\label{fig:Z2MinLevel}
\end{figure}


Now we will use the concept of level to prove something about multiplication in $\W_G(k)$. 
The end of the next lemma identifies a formula for a specific coordinate in the product of 
two Witt vectors if all the ``smaller'' coordinates are 0. It is analogous to something simple 
when $G = \ZZ_p$:  if $\mbf{a} = p^n(a_0 + p\mbf{a}')$ and $\mbf{b} = p^n(b_0 + p\mbf{b}')$
then $\mbf{a}\mbf{b} = p^{2n}(a_0b_0 + p\mbf{c})$.  (It is not assumed that $a_0$ and $b_0$ are nonzero.) 
There is a similar formula even if the $p$-powers in $\mbf{a}$ and $\mbf{b}$ 
are not equal, but for $G = \ZZ_p^d$ with $d \geq 2$ we don't have a formula that broad. 
It seems to be the price we pay for $\W_G(k)$ not being a domain. 

\begin{lem}
\label{lem:Fill}
Let $G$ be an abelian pro-$p$ group and $A$ be a ring of characteristic $p$. Let $V \in \mcal{F}(G)$ with $\# V \leq p^{m+n}$ for positive integers $m$ and $n$. Consider Witt vectors $\mbf{a}$ and $\mbf{b}$ in $\W_G(A)$ such that $a_U = 0$ for all $U < V$ such that $\# U < p^m$ and $b_U = 0$ for all $U < V$ such that $\# U < p^n$. Set $\mbf{c} = \mbf{ab}$. Then $c_V = 0$. 
\end{lem}

\begin{proof}
This will follow from functoriality by proving the following mod $p$ congruence for particular Witt vectors over the ring 
$R = \mbf{Z}[\underline{X},\underline{Y}]$. 
Define $\mbf{x},\mbf{y} \in \W_G(R)$ by 
\begin{displaymath}
x_T = 
\begin{cases}
0, & \text{$\# T < p^m$ and $T < V$,} \\
X_T, & \textrm{otherwise,}
\end{cases}  \text{ and } 
y_T = 
\begin{cases}
0, & \text{$\# T < p^n$ and $T < V$,} \\
Y_T, & \textrm{otherwise.}
\end{cases}
\end{displaymath} 
Set $\mbf{z} = \mbf{x}\mbf{y}$.  We will show that 
\begin{equation}\label{Ucong}
T \leq V \Longrightarrow z_T \equiv 0 \bmod pR. 
\end{equation}


Returning to the proof of (\ref{Ucong}), we argue by induction on $\#T$.
If  $\#T = 1$ then $T = 0$ and $z_0 = x_0y_0 = 0$. 
Let $p^r \leq p^{m+n}$ with $r \geq 1$ and 
assume by induction that for all $U < V$ such that $\# U < p^r$, $z_U \equiv 0 \bmod pR$. 
Pick $T \leq V$ with $\# T = p^r$.
Since the $T$-th Witt polynomial is a multiplicative function  
$W_T \colon \W_G(R) \rightarrow R$, 
$W_T(\mbf{z}) = W_T(\mbf{x})W_T(\mbf{y})$: 
$$\sum_{U \leq T} \varphi_T(U) z_U^{\# T / \#U } = \sum_{T_1,T_2 \leq T} \varphi_T(T_1)\varphi_T(T_2) x_{T_1}^{\#T / \# T_1} y_{T_2}^{\# T / \# T_2}.$$ Solving this equation for $z_T$ in $\QQ[\underline{X},\underline{Y}]$, 
\begin{equation}\label{ztf}
z_T = \sum_{T_1,T_2 \leq T} \frac{\varphi_T(T_1)\varphi_T(T_2)}{\varphi_T(T)} x_{T_1}^{\#T / \# T_1} y_{T_2}^{\# T / \# T_2} - \sum\limits_{U < T} \frac{\varphi_T(U)}{\varphi_T(T)} z_U^{\#T / \#U}.
\end{equation}

Since $z_U \in pR$ for $U < T$,
the second term in (\ref{ztf}) is $0$ mod $pR$ by Lemma \ref{lem:cong}.
In the first term  in (\ref{ztf}), 
if $T_1 = V$  then $T_1 = T = V$ and $\varphi_T(T_1)\varphi_T(T_2)/\varphi_T(T) \equiv \varphi_T(T_2) \equiv 0 \bmod pR$ by Lemma \ref{lem:propdivis} provided $T_2 \neq 0$, while if $T_2 = 0$ then $y_{T_2} = 0$. A similar argument holds if $T_2 = V$ so we can assume $T_1 < V$ and $T_2 < V$. If either $\# T_1 < p^m$ or $\# T_2 < p^n$ then $x_{T_1} = 0$ or $y_{T_2} = 0$ respectively. The remaining terms in the first sum in (\ref{ztf}) have $T_1 < V$ with $\# T_1 \geq p^m$ and $T_2 < V$ with $\# T_2 \geq p^n$. In this case $\#T_1\#T_2 \geq p^{m+n} > p^r = \#T$. Since $G$ is abelian, $\varphi_T(U) = \# U$ and so the coefficient in the first sum in (\ref{ztf}) is an integral multiple of $p$. Thus $z_T \equiv 0 \bmod pR$.     
\end{proof} 

\begin{remark}
The only place where $G$ being abelian played a role in the proof of Lemma~\ref{lem:Fill} was at the last step calculating the $p$-divisibility of the coefficients in the first sum in (\ref{ztf}). For general pro-$p$ groups, this ratio need not even be integral, however the conclusion of Lemma~\ref{lem:Fill} holds true not just for abelian pro-$p$ groups, but any pro-$p$ which satisfies $$ {\varphi_T(T_1)\varphi_T(T_2) \over \varphi_T(T)} \in p\ZZ$$ for any $T, T_1, T_2 \in \mcal{F}(G)$. 
\end{remark}

\begin{lem}
\label{lem:multFun} For $G = \ZZ_p^2$, let $\mbf{a}$ and $\mbf{b}$ be in $\W_G(k)$ such that there is a $T_0 \in \mcal{F}(G)$ such 
that $a_U = b_U = 0$ if $\Lev(U) < \Lev(T_0)$, or if $\Lev(U) = \Lev(T_0)$ and $\# U < \#T_0$. Set $\#T_0 = p^n$. For any $T \in \mcal{F}(G)$ with size $p^{2n}$ such that $T_0 \leq T$ and $\Lev(T_0) = \Lev(T)$, the product $\mbf{a}\mbf{b}$ has $T$-coordinate $(a_{T_0}b_{T_0})^{p^n}$.  
\end{lem}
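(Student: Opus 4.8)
The plan is to run the usual ``compute in a polynomial ring over $\ZZ$, then transfer by functoriality'' argument, using the multiplicativity of the ghost component $W_T$ to isolate the $T$-coordinate of the product. Let $R = \ZZ[\underline{X},\underline{Y}]$ and let $\mbf{x},\mbf{y} \in \W_G(R)$ be the Witt vectors that mimic the hypotheses on $\mbf{a},\mbf{b}$: set $x_U = 0$ and $y_U = 0$ when $\Lev(U) < \Lev(T_0)$, or when $\Lev(U) = \Lev(T_0)$ and $\# U < p^n$, and set $x_U = X_U$, $y_U = Y_U$ otherwise. Put $\mbf{z} = \mbf{x}\mbf{y}$. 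The goal will be the mod-$p$ congruence
\begin{equation*}
\#T = p^{2n},\ T_0 \leq T,\ \Lev(T) = \Lev(T_0) \ \Longrightarrow\ z_T \equiv (X_{T_0}Y_{T_0})^{p^n} \bmod pR,
\end{equation*}
after which a ring homomorphism $R \to k$ sending $X_U \mapsto a_U$, $Y_U \mapsto b_U$ together with functoriality of $\W_G$ gives $(\mbf{a}\mbf{b})_T = (a_{T_0}b_{T_0})^{p^n}$ in $\W_G(k)$, since $pk = 0$.

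\textbf{Key steps.} First I would write $W_T(\mbf{z}) = W_T(\mbf{x})W_T(\mbf{y})$ and solve for $z_T$ in $\QQ[\underline{X},\underline{Y}]$, exactly as in the proof of Lemma~\ref{lem:nicyclicprod}: using $\varphi_T(U) = \#U$ (since $G$ is abelian, every $G$-set is normal),
\begin{equation*}
\#T\, z_T = \Bigl(\sum_{U_1 \leq T} \#U_1\, x_{U_1}^{\#T/\#U_1}\Bigr)\Bigl(\sum_{U_2 \leq T} \#U_2\, y_{U_2}^{\#T/\#U_2}\Bigr) - \sum_{U < T} \#U\, z_U^{\#T/\#U}.
\end{equation*}
By Lemma~\ref{lem:cong} the subtracted sum is $\equiv 0 \bmod p^{n+1}R$ once one knows $z_U \in p^{?}R$ for $U<T$; but actually the cleaner route is to track $p$-divisibility directly. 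In the first product, a term indexed by $(U_1,U_2)$ carries the scalar $\#U_1\,\#U_2/\#T$ times $x_{U_1}^{\#T/\#U_1} y_{U_2}^{\#T/\#U_2}$ once we divide by $\#T = p^{2n}$; this scalar is a non-negative power of $p$, and it fails to be a positive power of $p$ only when $\#U_1\,\#U_2 = \#T = p^{2n}$, i.e. exactly when $\log_p \#U_1 + \log_p \#U_2 = 2n$. Since $x_{U_1} = 0$ unless $\#U_1 \geq p^n$ (given $\Lev(U_1) \geq \Lev(T_0)$, which holds because $U_1 \leq T$ and Lemma~\ref{lem:levelorder}; and if $\Lev(U_1) = \Lev(T_0)$ then size $\geq p^n$ by hypothesis on $\mbf{x}$, while if $\Lev(U_1) > \Lev(T_0)$ then $\#U_1 > p^{?}$ — here one uses that level strictly larger forces larger size), and similarly $y_{U_2} = 0$ unless $\#U_2 \geq p^n$, the only surviving non-$p$-divisible contributions have $\#U_1 = \#U_2 = p^n$, $\Lev(U_1) = \Lev(U_2) = \Lev(T_0)$, $U_1 \leq T$ and $U_2 \leq T$. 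Now invoke Lemma~\ref{lem:level1}: the unique such $U_1$ is $T_0$, and likewise $U_2 = T_0$. Thus modulo $pR$,
\begin{equation*}
z_T \equiv x_{T_0}^{p^n} y_{T_0}^{p^n} = (X_{T_0}Y_{T_0})^{p^n} \bmod pR,
\end{equation*}
provided the subtracted sum $\sum_{U<T}\#U\, z_U^{\#T/\#U}$ contributes $0$ mod $p^{2n+1}$ — this follows by an induction on $\#T$ showing each $z_U$ (for $U<T$) is itself an element of $p^{(\text{appropriate power})}R$, or more simply by noting $\#U\, z_U^{\#T/\#U}$ has $\#T/\#U \geq p$ as exponent so Lemma~\ref{lem:cong}-type estimates kill it mod $p^{2n+1}$ once $z_U \in pR$; one handles the base case $z_0 = x_0 y_0 = 0$ and propagates.

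\textbf{Main obstacle.} The delicate point is the bookkeeping that forces $U_1 = U_2 = T_0$: I must be sure that every pair $(U_1,U_2)$ with $U_i \leq T$, $x_{U_1}\neq 0$, $y_{U_2}\neq 0$, and $\#U_1\#U_2 \leq \#T = p^{2n}$ actually satisfies $\#U_1 = \#U_2 = p^n$ and $\Lev(U_1) = \Lev(U_2) = \Lev(T_0)$. The size lower bound $\#U_i \geq p^n$ is where I need to combine: (i) $\Lev(U_i) \geq \Lev(T_0)$ from $U_i \leq T$ and $\Lev(T) = \Lev(T_0)$ via Lemma~\ref{lem:levelorder}; (ii) the vanishing conditions defining $\mbf{x},\mbf{y}$, which kill all $U_i$ of level $=\Lev(T_0)$ and size $<p^n$; and (iii) the fact that for $G=\ZZ_p^2$ a $G$-set of level $\ell$ has size at least $p^{2\ell}$, so level strictly above $\Lev(T_0)$ already pushes the size up — I need to check the arithmetic closes so that no ``large level, moderate size'' escapee sneaks through with $\#U_1\#U_2 \leq p^{2n}$. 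Once the product is pinned to the single term from $(T_0,T_0)$, and the error-term induction is in place, transfer by functoriality is routine.
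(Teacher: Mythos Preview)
Your approach is essentially the paper's: lift to generic $\mbf{x},\mbf{y}$ over $R=\ZZ[\underline{X},\underline{Y}]$, expand $W_T(\mbf{z})=W_T(\mbf{x})W_T(\mbf{y})$, kill the subtracted sum using $z_U\in pR$ for $U<T$ together with Lemma~\ref{lem:cong}, reduce the product to the single pair $(T_0,T_0)$ via Lemma~\ref{lem:level1}, and transfer by functoriality.

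Two corrections are needed, though neither is fatal. First, you have the level inequality backwards: Lemma~\ref{lem:levelorder} says $U_i \leq T \Rightarrow \Lev(U_i) \leq \Lev(T) = \Lev(T_0)$, not $\geq$. This actually dissolves what you flagged as the ``main obstacle'': the case $\Lev(U_i) > \Lev(T_0)$ cannot occur for $U_i \leq T$, so your point (iii) is vacuous and there is no ``large level, moderate size'' escapee to worry about. The clean version of the argument is: if $\Lev(U_i) < \Lev(T_0)$ then $x_{U_i}=0$ (resp.\ $y_{U_i}=0$) by construction, and if $\Lev(U_i) = \Lev(T_0)$ with $\#U_i < p^n$ likewise; so every surviving $U_i$ has $\Lev(U_i)=\Lev(T_0)$ and $\#U_i \geq p^n$, hence $\#U_1\#U_2 \geq p^{2n}=\#T$ and your coefficient analysis goes through. (This also repairs the claim that $\#U_1\#U_2/\#T$ is a \emph{nonnegative} power of $p$, which as stated is false for the vanishing terms.)

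Second, for the subtracted sum you need $z_U \equiv 0 \bmod pR$ for every $U < T$, and ``propagates'' is too vague. This is exactly (the proof of) Lemma~\ref{lem:Fill} with $m=n$: for any $V<T$ and any $U<V$ with $\#U<p^n$ one has $\Lev(U)\leq\Lev(T_0)$ (again by monotonicity), so the construction forces $x_U=y_U=0$, and the inductive argument there gives $z_V\in pR$. The paper cites that lemma; you should too.
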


\begin{proof}
Let $R = \mbf{Z}[\underline{X},\underline{Y}]$. Define $\mbf{x},\mbf{y}$ in $\W_G(R)$ by 
\begin{displaymath}
x_U = 
\begin{cases}
0 & \text{if $\Lev(U) < \Lev(T_0)$,} \\ 
0 & \text{if $\Lev(U) = \Lev(T_0)$ and $\#U < p^n$,} \\
X_U & \textrm{otherwise,}
\end{cases}
\end{displaymath} and 
\begin{displaymath}
y_U = 
\begin{cases}
0 & \text{if $\Lev(U) < \Lev(T_0)$,} \\
0 & \text{if $\Lev(U) = \Lev(T_0)$ and $\# U < p^n$,} \\
Y_U & \textrm{otherwise.}
\end{cases}
\end{displaymath} 
and set $\mbf{z} = \mbf{xy}$. For any $G$-set $T$ of size $p^{2n}$ with $T_0 \leq T$ and $\Lev(T_0) = \Lev(T)$ we will show $z_T \equiv (X_{T_0}Y_{T_0})^{p^n} \bmod pR.$ By functoriality the lemma would then follow. 

For any $G$-set $T$, 
$W_T(\mbf{z}) = W_T(\mbf{x})W_T(\mbf{y})$:
\begin{equation}
\label{eqprod}
\sum\limits_{U \leq T} \# U z_U^{\#T/\#U} = \left(\sum_{U \leq T} \#U x_U^{\#T/\#U}\right)\left(\sum_{U \leq T} \#U y_U^{\#T/\#U}\right) .
\end{equation}

By Lemma \ref{lem:levcover} we can choose $T \geq T_0$ such that $\# T = p^{2n}$ and $\Lev(T)  = \Lev(T_0)$.
First we look at the left side of (\ref{eqprod}).  
Let $V < T$, so $\# V < p^{2n}$.
By Lemma \ref{lem:levelorder} $\Lev(V) \leq \Lev(T) = \Lev(T_0)$. So either $\Lev(V) < \Lev(T)$ or $\Lev(V) = \Lev(T)$. For all $U < V$ with $\# U < p^n$, both $x_U$ and $y_U$ are zero by hypothesis. The proof of Lemma \ref{lem:Fill} tells us that $z_V \equiv 0 \bmod pR$ when $V < T$ and $\# V < p^{2n}$.
So $\# V z_V^{\#T/\#V} \equiv 0 \bmod p^{2n+1}R$ for $V < T$ and $\# V < p^{2n}$, so the left side of (\ref{eqprod}) is 
$p^{2n}z_T \bmod p^{2n+1}R$. 

Now we turn to the right side of (\ref{eqprod}). 
If $U \leq T$ then $\Lev(U) \leq \Lev(T)$ (Lemma \ref{lem:levelorder}) and since $\Lev(T) = \Lev(T_0)$
our hypotheses tells us $x_U = 0$ and $y_U = 0$ if $\Lev(U) < \Lev(T)$,
or if $\Lev(U) = \Lev(T)$ and $\#U < p^n$. 
So the only $U$-terms in each sum on the right side of (\ref{eqprod}) that 
are not automatically 0 have $\Lev(U) = \Lev(T)$ and $\#U \geq p^n$.

Dropping the terms in (\ref{eqprod}) where $x_U = 0$ and $y_U = 0$ and reducing modulo $p^{2n+1}R$, one has that $p^{2n}z_T$ is equivalent to 
$$
\left(\sum_{\stackrel{\stackrel{U \leq T}{\Lev(U)=\Lev(T)}}{\# U \geq p^n}} \#U X_U^{\#T/\#U}\right)\left(\sum_{\stackrel{\stackrel{U \leq T}{\Lev(U)=\Lev(T)}}{\# U \geq p^n}}  \#U Y_U^{\#T/\#U}\right)  \bmod p^{2n+1}R.
$$ 
Each summand in the two sums has a coefficient divisible at least by $p^n$, so any product 
of a term from each sum is divisible by $p^{2n}$.  A term divisible by $p^{n+1}$ in one sum 
has a product with any term in the other sum that is $0 \bmod p^{2n+1}R$, so
$$
p^{2n}z_T  \equiv 
\left(\sum_{\stackrel{\stackrel{U \leq T}{\Lev(U)=\Lev(T)}}{\# U = p^n}} p^n X_U^{p^n}\right)\left(\sum_{\stackrel{\stackrel{U \leq T}{\Lev(U)=\Lev(T)}}{\# U = p^n}}  p^nY_U^{p^n}\right)  \bmod p^{2n+1}R.
$$

Dividing through by $p^{2n}$ and using additivity of the $p$th power map in $R/pR$, 
\begin{equation}
\label{eq:multFun1}
z_T  \equiv 
\left(\sum_{\stackrel{\stackrel{U \leq T}{\Lev(U)=\Lev(T)}}{\# U = p^n}}X_U \cdot \sum_{\stackrel{\stackrel{U \leq T}{\Lev(U)=\Lev(T)}}{\# U = p^n}}  Y_U\right)^{p^n}  \bmod pR.
\end{equation}

What are the $U$ lying below $T$ in $\mcal{F}(G)$ with the same level as $T$ and of size $p^n$? 
One example is $U = T_0$.  By Lemma \ref{lem:level1} this is the only example, so $z_T \equiv (X_{T_0}Y_{T_0})^{p^n} \bmod pR$. 
\end{proof}

We will only apply Lemma \ref{lem:multFun} when the two Witt vectors are equal, i.e., to the square of a nonzero element of $\W_G(k)$. 

\begin{remark}
Most of the proof of Lemma \ref{lem:multFun} goes through for $G = \ZZ_p^d$ for $d \geq 3$ and not just $d = 2$. In fact (\ref{eq:multFun1}) is true for $G = \ZZ_p^d$ for $d \geq 2$ and it was only at the last step where we used the fact that $d = 2$, which hinged on Lemma \ref{lem:level1}. Lemma \ref{lem:level1} is not true for $G = \ZZ_p^d$ with $d \geq 3$ since the level $0$ part of $\mcal{F}(\ZZ_p^d)$ is not a tree. For example when $d \geq 3$, $\ZZ_p^d/(\ZZ_p \times p\ZZ_p \times p^a\ZZ_p^{p-2})$ with $a \geq 1$ is not a cyclic group and so its strict downset is not a chain. 
\end{remark}

\begin{thm}
\label{thm:Red}
For any field $k$ of characteristic $p$, the ring $\W_{G}(k)$ is reduced for $G = \ZZ_p^2$.
\end{thm}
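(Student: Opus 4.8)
The plan is to show that $\W_G(k)$ has no nonzero nilpotents by producing, for every nonzero $\mbf{a}\in\W_G(k)$, an infinite sequence of powers $\mbf{a}^{2^j}$ which are all nonzero. The only place the field hypothesis on $k$ will be used is that $k$ is a reduced domain of characteristic $p$, so the $p$-power map on $k$ is injective and a $p$-power of a nonzero element of $k$ is again nonzero.

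First I would fix a nonzero $\mbf{a}\in\W_G(k)$ and prove by induction on $j\geq 0$ that $\mbf{a}^{2^j}\neq\mbf{0}$. The base case $j=0$ is the hypothesis. For the inductive step, set $\mbf{b}=\mbf{a}^{2^j}$, which is nonzero by induction, and apply Lemma \ref{lem:nonzero} to $\mbf{b}$: there is a $G$-set $T_0$ with $b_{T_0}\neq 0$ and $b_U=0$ whenever $\Lev(U)<\Lev(T_0)$, or $\Lev(U)=\Lev(T_0)$ and $\#U<\#T_0$. Write $\#T_0=p^t$. The key point is that this single $T_0$ simultaneously witnesses the hypothesis of Lemma \ref{lem:multFun} for both factors of the product $\mbf{b}\cdot\mbf{b}$. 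By Lemma \ref{lem:levcover} there is a $G$-set $T$ with $\#T=p^{2t}$, $T_0\leq T$, and $\Lev(T)=\Lev(T_0)$, so Lemma \ref{lem:multFun} applied to $\mbf{a}=\mbf{b}$, $\mbf{b}=\mbf{b}$ gives $(\mbf{b}^2)_T=(b_{T_0}b_{T_0})^{p^t}=b_{T_0}^{2p^t}$. Since $b_{T_0}\neq 0$ in the field $k$, this coordinate is nonzero, hence $\mbf{a}^{2^{j+1}}=\mbf{b}^2\neq\mbf{0}$, completing the induction.

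Finally, if $\mbf{a}$ were nilpotent, say $\mbf{a}^n=\mbf{0}$, then choosing $j$ with $2^j\geq n$ would force $\mbf{a}^{2^j}=\mbf{0}$, contradicting the claim just proved. Therefore every nonzero element of $\W_G(k)$ is non-nilpotent, i.e.\ $\W_G(k)$ is reduced.

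I do not expect a serious obstacle here: the proof is an assembly of Lemmas \ref{lem:nonzero}, \ref{lem:levcover}, and \ref{lem:multFun}. The one point that needs care is the order of minimization — first minimal level, then minimal size among coordinates of that level — which is exactly what Lemma \ref{lem:nonzero} supplies and precisely what makes the hypotheses of Lemma \ref{lem:multFun} available at each step of the induction. It is also worth remarking that the argument uses $d=2$ only through Lemma \ref{lem:multFun} (equivalently Lemma \ref{lem:level1}, which fails for $d\geq 3$ since the level-$0$ part of $\mcal{F}(\ZZ_p^d)$ is then not a tree), which is why the statement is restricted to $G=\ZZ_p^2$.
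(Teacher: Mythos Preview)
Your proof is correct and follows essentially the same approach as the paper: apply Lemma~\ref{lem:nonzero} to a nonzero Witt vector, then use Lemma~\ref{lem:multFun} (with Lemma~\ref{lem:levcover} supplying the target coordinate $T$) to see that its square has a nonzero coordinate, and iterate. The paper separates out the case $v_0\neq 0$ via Theorem~\ref{units}, but as you implicitly note this is unnecessary since the main argument already handles it when $T_0=0$.
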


\begin{proof}
Let $\mbf{v}$ be nonzero in $\W_{G}(k)$. If $v_0 \neq 0$ then $\mbf{v} \in \W_{G}(k)^\times$ by Theorem \ref{units}. Otherwise $\mbf{v} \in \goth{m}$ and it suffices to show for all $n \in \mbf{Z}^+$ that $\mbf{v}^{2^n}$ is nonzero. Thus it suffices to show if $\mbf{v} \in \goth{m}$ and $\mbf{v} \neq \mbf{0}$ then $\mbf{v}^2 \neq \mbf{0}$.  By Lemma \ref{lem:nonzero} there is a 
$T_0 \in \mcal{F}(G)$ such that $v_{T_0}$ is nonzero but $v_U$ is zero for all 
$U \in \mcal{F}(G)$ where $\Lev(U) < \Lev(T_0)$ or where 
$\Lev(U) = \Lev(T_0)$ and $\#U < \#T_0$.  Lemma \ref{lem:multFun} 
with $\mbf{a} = \mbf{b} = \mbf{v}$ tells us that $\mbf{v}^2$ has a coordinate equal to $v_{T_0}^{2\#T_0} \not= 0$, so $\mbf{v}^2 \not= \mbf{0}$.
\end{proof}

\section*{Acknowledgments}
This paper is based on the author's Ph.D. thesis at the University of Connecticut \cite{Thesis}. The author is forever indebted to his advisor Keith Conrad, without whose endless patience and support this work would not have been possible. The author would also like to thank Sarah Glaz, \'{A}lvaro Lozano-Robledo, and Jeremy Teitelbaum for many helpful and enlightening discussions and editorial suggestions, and to Anurag Singh, Jesse Elliott and Karl Schwede for their multiple readings and helpful comments.

\Addresses
\end{document}